\newcommand{\note}[2]{}
\newcommand{\mcg}[2][]{%
\ifthenelse{\equal{#1}{}}{{\mathcal{G}_{#2}}}{\mathcal{G}_{#2}^{(#1)}}%
}
\newcommand{\mcf}[2][]{%
\ifthenelse{\equal{#1}{}}{{\mathcal{F}_{#2}}}{\mathcal{F}_{#2}^{(#1)}}%
}
\newcommand{\mctf}[2][]{%
\ifthenelse{\equal{#1}{}}{\widetilde{\mathcal{F}}_{#2}}{\widetilde{\mathcal{F}}_{#2}^{(#1)}}%
}
\newcommand{\mcbf}[2][]{%
\ifthenelse{\equal{#1}{}}{\overline{\mathcal{F}}_{#2}}{\overline{\mathcal{F}}_{#2}^{(#1)}}%
}
\newcommand{\wrt}{with respect to}
\newcommand{\iid}{i.i.d.}
\newcommand{\as}{a.s.}
\newcommand{\ie}{i.e.}
\newcommand{\eg}{e.g.}
\newcommand{\eqsp}{\;}
\newcommand{\rset}{\ensuremath{\mathbb{R}}}
\newcommand{\nset}{\ensuremath{\mathbb{N}}}
\newcommand{\1}{\ensuremath{\mathbf{1}}}
\newcommand{\eqdef}{\ensuremath{\stackrel{\mathrm{def}}{=}}}
\newcommand{\rmd}{\ensuremath{\mathrm{d}}}
\newcommand{\rmi}{\ensuremath{\mathrm{i}}}
\newcommand{\rme}{\ensuremath{\mathrm{e}}}
\newcommand{\mcb}[1]{\ensuremath{\mathbb{B}_{\mathrm{b}}(#1)}}
\newcommand{\supnorm}[1]{\ensuremath{\left|#1\right|_{\infty}}}
\newcommand{\esssup}[2][]%
{\ifthenelse{\equal{#1}{}}{\left| #2 \right|_\infty}{\left| #2 \right|^2_{\infty}}}
\newcommand{\oscnorm}[2][]%
{\ifthenelse{\equal{#1}{}}{\ensuremath{\operatorname{osc}\left(#2\right)}}{\ensuremath{\operatorname{osc}^{#1}\!\left(#2\right)}}}
\newcommand{\essosc}[3][]%
{\ifthenelse{\equal{#1}{}}{\ensuremath{\operatorname{osc}_{#2}{\left(#3\right)}}}{\ensuremath{\operatorname{osc}^{#1}_{#2}\left(#3\right)}}}
\def\aux{{\scriptstyle{\mathrm{aux}}}}
  \newtheorem{thm}{Theorem}
  \newtheorem{prop}[thm]{Proposition}
  \newtheorem{lem}[thm]{Lemma}
  \newtheoremstyle{assumption}
  {6pt}
  {6pt}
  {}
  {-.7em}
  {\bfseries}
  {.}
  {0.1em}
  {}
  \theoremstyle{assumption}
  \newtheorem{assum}{A}
  \theoremstyle{definition}
  \theoremstyle{definition}
\theoremstyle{definition} 
  \newtheoremstyle{example}
  {6pt}
  {6pt}
  {\itshape}
  {-.5em}
  {\itshape}
  {}
  {1.5em}
  {}
  \theoremstyle{definition}
\newcommand{\PP}{\ensuremath{\mathbb{P}}}
\newcommand{\PE}{\ensuremath{\mathbb{E}}}
\newcommand{\CPE}[3][]
{\ifthenelse{\equal{#1}{}}{\mathbb{E}\left[\left. #2 \, \right| #3 \right]}{\mathbb{E}_{#1}\left[\left. #2 \, \right | #3 \right]}}
\newcommand{\CPP}[3][]
{\ifthenelse{\equal{#1}{}}{\mathbb{P}\left(\left. #2 \, \right| #3 \right)}{\mathbb{P}_{#1}\left(\left. #2 \, \right | #3 \right)}}
\renewcommand{\mid}{\,|\,}
\newcommand{\pscal}[2]{\left\langle #1, #2 \right\rangle}
\newcommand{\ci}[4][]%
{%
\ifthenelse{\equal{#1}{}}{\ensuremath{#2 \perp\!\!\!\perp #3 \mid #4 }}{\ensuremath{#2 \perp\!\!\!\perp #3 \mid #4 \; \: [#1]}}%
}
\newcommand{\dlim}{\ensuremath{\stackrel{\mathcal{D}}{\longrightarrow}}}
\newcommand{\plim}{\ensuremath{\stackrel{\mathrm{P}}{\longrightarrow}}}
\newcommand{\Xset}{\ensuremath{\mathsf{X}}}
\newcommand{\Xsigma}[1][]%
{%
\ifthenelse{\equal{#1}{}}{\ensuremath{\mathcal{B}(\Xset)}}{\ensuremath{\mathcal{B}(\Xset^{#1})}}
}
\newcommand{\Yset}{\ensuremath{\mathsf{Y}}}
\newcommand{\Ysigma}{\ensuremath{\mathcal{B}(\Yset)}}
\newcommand{\chunk}[4][]%
{\ifthenelse{\equal{#1}{}}{\ensuremath{{#2}_{#3:#4}}}{\ensuremath{#2^#1}_{#3:#4}}
}
\newcommand{\Q}{\ensuremath{Q}}
\newcommand{\q}{\ensuremath{q}}
\newcommand{\Xinit}{\ensuremath{\chi}}
\newcommand{\XinitIS}[2][]
{\ifthenelse{\equal{#1}{}}{\ensuremath{\rho_{#2}}}{\ensuremath{\check{\rho}_{#2}}}}
\newcommand{\filt}[2][]%
{%
\ifthenelse{\equal{#1}{}}{\ensuremath{\phi_{#2}}}{\ensuremath{\phi_{#1,#2}}}%
}
\newcommand{\pred}[3][]%
{%
\ifthenelse{\equal{#1}{}}{\ensuremath{\phi_{#2|#3}}}{\ensuremath{\phi_{#1,#2|#3}}}%
}
\newcommand{\adjfunc}[4][]
{\ifthenelse{\equal{#1}{}}{\ifthenelse{\equal{#4}{}}{\vartheta_{#2}}{\vartheta_{#2}(#4)}}
{\ifthenelse{\equal{#1}{smooth}}{\ifthenelse{\equal{#4}{}}{\tilde{\vartheta}_{#2}}{\tilde{\vartheta}_{#2}(#4)}}
{\ifthenelse{\equal{#1}{fully}}{\ifthenelse{\equal{#4}{}}{\vartheta^\star_{#2}}{\vartheta^\star_{#2}(#4)}}{\mathrm{erreur}}}}}
\newcommandtwoopt{\post}[4][][]%
{
\ifthenelse{\equal{#2}{}}
{\ifthenelse{\equal{#1}{}}{\ensuremath{\phi_{#3|#4}}}{\ensuremath{\phi_{#1,#3|#4}}}}%
{\ifthenelse{\equal{#2}{hat}}{\ifthenelse{\equal{#1}{}}{\ensuremath{\hat{\phi}_{#3|#4}}}{\ensuremath{\hat{\phi}_{#1,#3|#4}}}}
{\ifthenelse{\equal{#2}{tilde}}{\ifthenelse{\equal{#1}{}}{\ensuremath{\tilde{\phi}_{#3|#4}}}{\ensuremath{\tilde{\phi}_{#1,#3|#4}}}}
{\ifthenelse{\equal{#2}{tar}}{\ifthenelse{\equal{#1}{}}{\ensuremath{\hat{\phi}^{\mathrm{tar}}_{#3|#4}}}{\ensuremath{\hat{\phi}^{\mathrm{tar}}_{#1,#3|#4}}}}
{\ifthenelse{\equal{#1}{}}{\ensuremath{\hat{\phi}^{#2}_{#3|#4}}}{\ensuremath{\hat{\phi}^{#2}_{#1,#3|#4}}}}
}
}
}
}
\newcommand{\asymVar}[4][]{
\ifthenelse{\equal{#1}{}}{\ifthenelse{\equal{#4}{}}{\ensuremath{\Gamma_{#2|#3}}}{\ensuremath{\Gamma_{#2|#3}\left[#4\right]}}}
{\ifthenelse{\equal{#4}{}}{\ensuremath{\Gamma_{#1,#2|#3}}}{\ensuremath{\Gamma_{#1,#2|#3}\left[#4\right]}}}
}
\newcommand{\asymVarFearnhead}[4][]{
\ifthenelse{\equal{#1}{}}{\ensuremath{\Upsilon_{#2|#3}\left[#4\right]}}{\ensuremath{\Upsilon_{#1,#2|#3}\left[#4\right]}}
}
\newcommand{\asymVarDoucet}[5][]
{\ifthenelse{\equal{#1}{}}{\Delta^{#2}_{#3|#4}\left[#5\right]}{\Delta^{#2}_{#1,#3|#4}\left[#5\right]}}
\newcommand{\asymVarJoint}[4][]{
\ifthenelse{\equal{#1}{}}{\ensuremath{\tilde \Gamma_{#2|#3}\left[#4\right]}}{\ensuremath{\tilde \Gamma_{#1,#2|#3}\left[#4\right]}}
}
\newcommandtwoopt{\incrasymVarJoint}[5][][]
{
\ifthenelse{\equal{#1}{}}
    {\ifthenelse{\equal{#2}{forward}}{\ensuremath{\tilde{\sigma}^{2,\mathrm{f}}_{#3|#4}\left[#5\right]}}{\ensuremath{\tilde{\sigma}^{2,\mathrm{b}}_{#3|#4}\left[#5\right]}}}
    {\ifthenelse{\equal{#2}{forward}}{\ensuremath{\tilde{\sigma}^{2,\mathrm{f}}_{#1,#3|#4}\left[#5\right]}}{\ensuremath{\tilde{\sigma}^{2,\mathrm{b}}_{#1,#3|#4}\left[#5\right]}}}
}
\newcommand{\incrasymVar}[4][]{
\ifthenelse{\equal{#1}{}}{\ensuremath{\sigma^2_{#2|#3}\left[#4\right]}}{\ensuremath{\sigma^2_{#1,#2|#3}\left[#4\right]}}
}
\newcommand{\logl}[2][]%
{%
\ifthenelse{\equal{#1}{}}{\ensuremath{\ell_{#2}}}{\ensuremath{\ell_{#1,#2}}}%
}
\newcommand{\lhood}[2][]%
{%
\ifthenelse{\equal{#1}{}}{\ensuremath{\mathrm{L}_{#2}}}{\ensuremath{\mathrm{L}_{#1,#2}}}%
}
\newcommand{\cc}[2][]%
{%
\ifthenelse{\equal{#1}{}}{\ensuremath{c_{#2}}}{\ensuremath{c_{#1,#2}}}%
}
\newcommand{\forvar}[2][]%
{%
\ifthenelse{\equal{#1}{}}{\ensuremath{\alpha_{#2}}}{\ensuremath{\alpha_{#1,#2}}}%
}
\newcommand{\nforvar}[2][]%
{%
\ifthenelse{\equal{#1}{}}{\ensuremath{\bar{\alpha}_{#2}}}{\ensuremath{\bar{\alpha}_{#1,#2}}}%
}
\newcommandtwoopt{\BK}[3][][]%
{%
\ifthenelse{\equal{#2}{}}{\ifthenelse{\equal{#1}{}}{\ensuremath{\mathrm{B}_{#3}}}{\ensuremath{\mathrm{B}_{#1,#3}}}}%
{\ifthenelse{\equal{#1}{}}{\ensuremath{\hat{\mathrm{B}}_{#3}}}{\ensuremath{\hat{\mathrm{B}}_{#1,#3}}}}
}
\newcommand{\filtfunc}[2][]%
{%
\ifthenelse{\equal{#1}{}}{\ensuremath{\tau_{#2}}}{\ensuremath{\tau_{#1,#2}}}%
}
\newcommand{\NISE}[4][]%
{%
\ifthenelse{\equal{#1}{}}{\ensuremath{\tilde{#2}^{\scriptstyle \mathrm{IS}}_{#4}\left(#3 \right)}}{\ensuremath{\tilde{#2}^{\scriptstyle \mathrm{IS}}_{#1,#4}\left(#3 \right)}}%
}
\newcommand{\ISE}[4][]%
{%
\ifthenelse{\equal{#1}{}}{\ensuremath{\widehat{#2}^{\scriptstyle  \mathrm{IS}}_{#4} \left(#3 \right)}}{\ensuremath{\widehat{#2}^{\scriptstyle  \mathrm{IS}}_{#1,#4} \left(#3 \right)}}%
}
\newcommand{\SIRE}[4][]%
{%
\ifthenelse{\equal{#1}{}}{\ensuremath{\hat{#2}^{\scriptstyle  \mathrm{SIR}}_{#4} \left(#3 \right)}}{\ensuremath{\hat{#2}^{\scriptstyle  \mathrm{SIR}}_{#1,#4} \left(#3 \right)}}%
}
\newcommand{\MCE}[3]%
{
{\ensuremath{\hat{#1}^{\scriptstyle  \mathrm{MC}}_{#3} \left(#2 \right)}}%
}
\newcommand{\kiss}[3][]
{\ifthenelse{\equal{#1}{}}{p_{#2}}
{\ifthenelse{\equal{#1}{fully}}{p^{\star}_{#2}}
{\ifthenelse{\equal{#1}{smooth}}{\tilde{r}_{#2}}{\mathrm{erreur}}}}}
\newcommand{\epart}[2]{\ensuremath{\xi_{#1}^{#2}}}
\newcommand{\epartpred}[2]{\ensuremath{\xi_{#1}^{#2}}}
\newcommand{\epartpredIndex}[2]{\ensuremath{\xi_{#1}^{#2_{#1}}}}
\newcommand{\ewght}[2]{\ensuremath{\omega_{#1}^{#2}}}
\newcommand{\ewghtfunc}[1]{\ensuremath{\omega_{#1}}}
\newcommand{\swghtIndex}[3]{\ensuremath{\omega_{#1|#2}^{#3_{#1:#2}}}}
\newcommand{\swght}[2]{\ensuremath{\omega_{#1}^{#2}}}
 \newcommand{\swghtcIndex}[2]{\ensuremath{\varpi_{#1}^{#2_{#1}}}}
\newcommand{\sumwght}[2][]{%
\ifthenelse{\equal{#1}{}}{\ensuremath{\Omega_{#2}}}{\ensuremath{\Omega_{#2}^{(#1)}}}}
\newcommand{\tsumweight}[2][]{%
\ifthenelse{\equal{#1}{}}{\ensuremath{\widetilde{\Omega}_{#2}}}{\ensuremath{\widetilde{\Omega}_{#2}^{(#1)}}}}
\newcommand{\indexresidual}[2][]{%
\ifthenelse{\equal{#1}{}}{\ensuremath{J_{#2}}}{\ensuremath{J_{#2}^{(#1)}}}}
\newcommand{\efilt}[2][]%
{%
\ifthenelse{\equal{#1}{}}{\ensuremath{\hat{\phi}_{#2}}}{\ensuremath{\hat{\phi}_{#1,#2}}}%
}
\newcommand{\epost}[3][]%
{%
\ifthenelse{\equal{#1}{}}{\ensuremath{\hat{\phi}_{#2|#3}}}{\ensuremath{\hat{\phi}_{#1,#2|#3}}}%
}
\newcommandtwoopt{\backDist}[4][][]%
{
\ifthenelse{\equal{#1}{}}{\ifthenelse{\equal{#2}{}}{\psi_{#3|#4}}{\psi_{#2,#3|#4}}}%
{\ifthenelse{\equal{#1}{hat}}{\ifthenelse{\equal{#2}{}}{\hat{\psi}_{#3|#4}}{\hat{\psi}_{#2,#3|#4}}}
{\ifthenelse{\equal{#1}{tar}}{\ifthenelse{\equal{#2}{}}{\hat{\psi}^{\mathrm{tar}}_{#3|#4}}{\hat{\psi}^{\mathrm{tar}}_{#2,#3|#4}}}
{\ifthenelse{\equal{#1}{aux}}{\ifthenelse{\equal{#2}{}}{\hat{\psi}^{\mathrm{aux}}_{#3|#4}}{\hat{\psi}^{\mathrm{aux}}_{#2,#3|#4}}}
}
}
}
}
\newcommand{\backsumwght}[3][]{%
\ifthenelse{\equal{#1}{}}{\ensuremath{{\check{\Omega}}_{#2|#3}}}{\ensuremath{{\check{\Omega}}_{#2|#3}^{(#1)}}}}
\newcommand{\backasymVar}[4][]{
\ifthenelse{\equal{#1}{}}{\ensuremath{{\check{\Gamma}}_{#2|#3}\left[#4\right]}}{\ensuremath{\check{\Gamma}_{#1,#2|#3}\left[#4\right]}}
}
\newcommand{\instrpostaux}[2]{\ensuremath{\pi_{#1|#2}}}
\begin{document}
\begin{frontmatter}
\title{On the Forward Filtering Backward Smoothing  particle approximations of the smoothing distribution in general state spaces models}
\runauthor{Douc et al.}
\runtitle{Particle approximation of smoothing distributions}
\begin{aug}
\author{Randal Douc \corref{} \ead[label=a1]{randal.douc@it-sudparis.eu}}
\affiliation{Institut Télécom/Télécom SudParis}
\author{Aurélien Garivier, \ead[label=a2]{aurelien.garivier@telecom-paristech.fr}}
\affiliation{CNRS/Télécom ParisTech, CNRS UMR 5141}
\author{Eric Moulines, \ead[label=a2]{eric.moulines@telecom-paristech.fr}}
\affiliation{Institut Télécom/Télécom ParisTech, CNRS UMR 5141}
\author{Jimmy Olsson \ead[label=a3]{jimmy@maths.lth.se}}
\affiliation{Center of Mathematical Sciences}
\address{Télécom SudParis, \\ 9 rue Charles Fourier, \\ 91011 Evry-Cedex, France \\ \printead{a1}}
\address{Télécom ParisTech, \\ 46 rue Barrault, \\ 75634 Paris, Cédex 13, France \\ \printead{a2}}
\address{Center of Mathematical Sciences, \\ Lund University, \\ Box 118 SE-22100 Lund, SWEDEN \\ \printead{a3}}
\end{aug}

\begin{abstract}
A prevalent problem in general state-space models is the approximation of the smoothing distribution of a state, or a sequence of states,
conditional on the observations from the past, the present, and the future. The aim of this paper is to provide a rigorous foundation for the calculation, or approximation, of
such smoothed distributions, and to analyse in a common unifying framework different schemes to reach this goal. Through a cohesive and generic exposition of the
scientific literature we offer several novel extensions allowing to approximate joint smoothing distribution in the most general case with a cost growing linearly with the
number of particles.
\end{abstract}
\begin{keyword}
\kwd{Sequential Monte-carlo methods, particle filter, smoothing, Hidden Markov Models.}
\end{keyword}

\begin{keyword}[class=AMS]
\kwd[Primary ]{60G10}
\kwd{60K35}
\kwd[; secondary ]{60G18}
\end{keyword}

\end{frontmatter}
\newpage

\section{Introduction}

Consider the nonlinear \emph{state space model}, where the \emph{state process} $\{ X_t \}_{t \geq 0}$ is a Markov chain on some general state space $(\Xset, \Xsigma)$ having initial distribution $\Xinit$ and transition kernel $Q$. The state process is hidden but partially observed through the \emph{observations} $\{ Y_t \}_{t \geq 0}$, which are $\Yset$-valued random variables being independent conditionally on the latent state sequence $\{ X_t \}_{t \geq 0}$; in addition, there exists a $\sigma$-finite measure $\lambda$ on $(\Yset, \Ysigma)$, and a transition density function $x \mapsto g(x, y)$, referred to as the \emph{likelihood}, such that $\CPP{Y_t \in A}{X_t} = \int_A g(X_t, y) \, \lambda(dy)$ for all $A \in \Ysigma$. The kernel $Q$ and the likelihood function $x \mapsto g(x, y)$ are assumed to be known. We shall consider the case in which the observations have arbitrary but fixed values $\chunk{y}{0}{T} \eqdef [y_0,\dots,y_T]$.

Statistical inference in general state space models involves computing the \emph{posterior distribution} of a batch of state variables $\chunk{X}{s}{s'}$ conditioned on a batch of observations $\chunk{Y}{t}{T}$, which we denote by $\post{s:s'}{t:T}$ (the dependence on the observations $\chunk{Y}{t}{T}$ is implicit). The posterior distribution can be computed in closed form only in very specific cases, principally, when the state space model is linear and Gaussian or when the state space $\Xset$ is a finite set. In the vast majority of cases, nonlinearity or non-Gaussianity render analytic solutions intractable \citep{anderson:moore:1979,kailath:sayed:hassibi:2000,ristic:arulampalam:gordon:2004,cappe:moulines:ryden:2005}.

These limitations have stimulated the interest in alternative strategies being able to handle more general state and measurement equations without putting strong a priori constraints on the behaviour of the posterior distributions. Among these, \emph{Sequential Monte Carlo} (SMC) \emph{methods} play a central role. SMC methods refer to a class of algorithms for approximating a \emph{sequence of probability distributions} over a \emph{sequence of probability spaces} by updating recursively a set of random \emph{particles} with associated nonnegative \emph{weights}. These algorithms can be seen as a combination of the \emph{sequential importance sampling} and \emph{sampling importance resampling} methods introduced in \cite{handschin:mayne:1969} and \cite{rubin:1987}, respectively. SMC methods have emerged as a key tool for approximating state posterior distributions in general state space models; see, for instance, \citep{liu:chen:1998,liu:2001,doucet:defreitas:gordon:2001,ristic:arulampalam:gordon:2004} and the references therein.

The recursive formulas generating the filtering distribution $\post{T}{0:T}$ and the joint smoothing distributions
$\post{0:T}{0:T}$ are closely related. Using the  basic \emph{filtering} version of the particle filter 
actually provides as a by-product an approximation of the joint smoothing distribution in the sense that the particle \emph{paths} and their associated weights can be considered as a weighted sample approximating $\post{0:T}{0:T}$. From these joint draws one may readily obtain fixed lag or fixed interval smoothed samples by simply extracting the required components from the sampled particle paths and retaining the same weights.
This appealingly simple scheme can be used successfully for estimating the smoothing joint smoothing distribution for small values of $T$ or any marginal smoothing distribution $\post{s}{0:T}$, with $s \leq T$, when $s$ and $T$ are close; however, when $T$ is large or when $s$ and $T$ are remote, the associated particle approximations are inaccurate \cite{godsill:doucet:west:2004}.

In this article, we consider the forward filtering backward smoothing (FFBSm) algorithm and the forward filtering backward simulation (FFBSi) sampler. These algorithms share some similarities with the forward-backward algorithm for discrete state-space HMM. The FFBSm algorithm  consists in reweighting, in a backward pass, the weighted sample approximating the filtering distribution (see \cite{kitagawa:1996}, \cite{huerzeler:kuensch:1998}, \cite{doucet:godsill:andrieu:2000}). The FFBSi sample, conditionally independently to the particles and the weights obtained in the forward path, realizations of the joint smoothing fixed interval smoothing distribution; see \cite{godsill:doucet:west:2004}.

The complexity of the FFBSm algorithm to estimate the marginal fixed interval smoothing distribution or of the original formulation of the FFBSi sampler grows generally as the square of the number of particles $N$  multiplied by the time horizon $T$. This complexity can be linear in $N$ for some specific examples. Otherwise, some tricky algorithms should be developed to overcome this problem, see for example \cite{klaas:briers:defreitas:doucet:maskell:lang:2006}. Note that these computational techniques lead to algorithms with complexity of order $N \log(N)$, but this reduction in complexity comes at the price of introducing some level of approximations (truncation) which in practice introduce some bias which might be difficult to control. In this paper, a modification of the original FFBSi algorithm is presented, having a complexity which grows linearly in $N$, without having to truncate the density or to use intricate data structures.

 The FFBSm and FFBSi algorithms are very challenging to analyze and, up to now, only a consistency result is available in \cite{godsill:doucet:west:2004} (the proof of this result being plagued by an error). The FFBSm estimate and the FFBSi trajectories explicitly depend upon all the particles and weights drawn before and after this time instant. It is therefore impossible to analyze directly the
convergence of this approximation using the standard techniques developed to study the interacting particle approximations of the Feynman-Kac flows (see \cite{delmoral:2004} or \cite{douc:moulines:2008}).

The paper is organized as follows. In Section \ref{sec:FFBS}, the FFBSm algorithm and the FFBSi sampler are introduced. An exponential deviation inequality is first provided in Section~\ref{sec:exponentialFFBS} for the fixed-interval joint smoothing distribution. A Central Limit Theorem (CLT) for this quantity is then obtained in Section~\ref{sec:CLTFFBS}. Time-uniform exponential bounds are then computed for the FFBSm marginal smoothing distribution estimator, under  mixing conditions on the kernel $Q$, in Section~\ref{sec:TimeUniformExponentialFFBS}. Finally, under the same mixing condition, an explicit bound for the variance of the marginal smoothing distribution estimator is derived in Section~\ref{sec:TimeUniformCLTFFBS}.

\paragraph{Notations and Definitions}
\label{sec:Notations and Definitions}
We denote $a_{m:n} \eqdef (a_m, \ldots, a_n)$ and $[a_{m:n},b_{p:q}]\eqdef(a_m,\ldots,a_n,b_p,\ldots,b_q)$.
We assume that all random variables are defined on a common probability space $(\Omega, \mcf{}, \PP)$. A state space $\Xset$ is said to be \emph{general} if it is a Polish space and its topology is metrizable by some metric $d$ such that
$(\Xset, d)$ is a complete separable metric space. We denote by $\Xsigma$ the associated Borel $\sigma$-algebra and by $\mathbb{B}_b(\Xset)$ the set of all bounded  $\Xsigma / \mathcal{B}(\rset)$-measurable functions from $\Xset$ to $\rset$.  For any measure $\mu$ on $(\Xset,\Xsigma)$ and measurable function $f$ satisfying $\int_\Xset |f(x)| \, \mu(\rmd x) < \infty$ we set $\mu(f)= \int_\Xset f(x) \, \mu(\rmd x)$. Moreover, we say that two measures $\mu$ and $\nu$ are \emph{proportional} (written $\mu \propto \nu$) if they differ only by a normalization constant.

Let $\Xset$ and $\Yset$ be two general state spaces. A kernel $V$ from $(\Xset, \Xsigma)$ to $(\Yset, \Ysigma)$ is a map from $\Xset \times \Ysigma$ into $[0, 1]$ such that, for each $A \in \Ysigma$, $x \mapsto V (x, A)$ is a nonnegative bounded measurable function on $\Xset$ and, for each $x \in \Xset$, $A \mapsto V (x, A)$ is a measure on $\Ysigma$. The function $V(\cdot, f)$ belongs to $\mathbb{B}(\Xset)$ and we sometimes use the abridged notation $Vf$ instead of $V(\cdot, f)$. For a measure $\nu$ on $(\Xset, \Xsigma)$, we denote by $\nu V$ the measure on $(\Yset , \Ysigma)$ defined by, for any $A \in \Ysigma$, $\nu V(A) = \int_\Xset V(x,A) \, \nu(\rmd x)$.

For simplicity, we consider a \emph{fully dominated} state space models for which there exists a $\sigma$-finite measure $\nu$ on $(\Xset, \Xsigma)$ such that, for all $x \in \Xset$, $\Q(x, \cdot)$ has a transition probability density $\q(x, \cdot)$ \wrt\ $\nu$. For notational simplicity, $\nu(\rmd x)$ is sometimes replaced by $\rmd x$.

For any initial distribution $\Xinit$ on $\Xset$ and any $t \leq s \leq s'\leq T$, denote by $\post[\Xinit]{s:s'}{t:T}$ the posterior distribution of the state vector $\chunk{X}{s}{s'}$ given the observations $\chunk{Y}{t}{T}$ and knowing that $X_0 \sim \Xinit$. For all $A \in \Xsigma^{\otimes (s'-s+1)}$,  this distribution may be expressed as
\begin{multline*}
\post[\Xinit]{s:s'}{t:T}(A) = \\
\frac{\idotsint \post[\Xinit]{t}{t}(\rmd x_t) \prod_{u = t+1}^T g_{u-1}(x_{u-1}) \, Q(x_{u-1}, \rmd x_u) g_T(x_T) \1_A(\chunk{x}{s}{s'})}{\idotsint \post[\Xinit]{t}{t}(\rmd x_t) \prod_{u=t+1}^T g_{u-1}(x_{u-1}) \, Q(x_{u-1}, \rmd x_u)  g_T(x_T)} \eqsp,
\end{multline*}
with the convention $\prod_{u=s}^t=1$ if $s>t$. For simplicity, we will use the shorthand notations:
\begin{equation}\label{eq:convNot}
\begin{cases}
\post[\Xinit]{s}{t:T} \eqdef \post[\Xinit]{s:s}{t:T} \eqsp, \\
\post[\Xinit]{s:s'}{T} \eqdef \post[\Xinit]{s:s'}{0:T} \eqsp, \\
\post[\Xinit]{s}{T} \eqdef \post[\Xinit]{s:s}{0:T} \eqsp.  \\
\end{cases}
\end{equation}
In fully dominated case, the smoothing distributions $\post[\Xinit]{s:s'}{t:T}$ have densities (which we will denote similarly) with respect to the product measure $\nu^{\otimes (s' - s + 1)}$.

\section{Algorithms}
\label{sec:FFBS}
Conditionally on the observations $\chunk{Y}{0}{T}$, the state sequence $\{ X_s \}_{s \geq 0}$ is a
time-inhomogeneous Markov chain. This property remains true in the \emph{time-reversed} direction,
\ie\ given a strictly positive index $T$, initial distribution $\Xinit$,
and index $s \in \{0, \dotsc, T-1 \}$, for any $f \in \mcb{\Xset}$,
$$
\CPE[\Xinit]{f(X_s)}{\chunk{X}{s+1}{T}, Y_{0:T}} = \CPE[\Xinit]{f(X_s)}{X_{s+1}, Y_{s:T}}= \BK[\Xinit]{s}(X_{s+1}, f) \eqsp.
$$
where  $\BK[\Xinit]{s}(x_{s+1}, \cdot)$ is the  \emph{backward kernel}. In the fully dominated case, this kernel may be expressed as
\begin{equation}
\label{eq:backward-kernel}
\BK[\Xinit]{s}(x_{s+1}, A)= \int_A \frac{\post[\Xinit]{s}{s}(x)q(x, x_{s+1})}
{\int \post[\Xinit]{s}{s}(x')q(x', x_{s+1})\, \rmd x'} \, \1_A(x) \rmd x  \eqsp,
\end{equation}
Using these notations, for any integers $T > 0$, index $s  \in \{0,\dots, T-1\}$ and initial probability $\Xinit$, the joint smoothing distribution may, for all $f \in \mcb{\Xset^{T-s+1}}$, be recursively expressed as
\begin{align}
&\post[\Xinit]{s:T}{T}(f) = \CPE[\Xinit]{f( \chunk{X}{s}{T})}{\chunk{Y}{0}{T}}\nonumber \\
& \quad = \idotsint f(\chunk{x}{s}{T}) \: \BK[\Xinit]{s}(x_{s+1}, \rmd x_s) \, \post[\Xinit]{s+1:T}{T}(\rmd \chunk{x}{s+1}{T}) \eqsp, \label{eq:smoothing:backw_decomp}
\end{align}
with $\post[\Xinit]{T:T}{T}= \post[\Xinit]{T}{T}$ being the filtering distribution at time $T$.
If $f$ depends on the first component $x_s$ only, then Eq.~\eqref{eq:smoothing:backw_decomp} yields the marginal smoothing distribution,
which is defined recursively by:
\begin{equation}
\post[\Xinit]{s}{T}(f) = \iint f(x_s) \: \BK[\Xinit]{s}(x_{s+1}, \rmd x_s) \, \post[\Xinit]{s+1}{T}(\rmd x_{s+1}) \eqsp.
\label{eq:smoothing:marg:backw_decomp}
\end{equation}
The method proposed by \cite{huerzeler:kuensch:1998,doucet:godsill:andrieu:2000} consists in approximating the smoothing distribution by storing the particles and associated weights obtained in a forward filtering pass and revising the weights in a backward smoothing pass.
In the forward pass, particle approximations of the filtering distributions $\post[\Xinit]{s}{s}$ are computed recursively for $s = 0, \ldots, T$.
Each approximation is formed by a set of particles $\{ \epart{s}{i} \}_{i=1}^N$ and associated importance weights $\{ \ewght{s}{i} \}_{i=1}^N$ according to
\begin{equation}
\label{eq:approximation-filtering-distribution}
\post[\Xinit][hat]{s}{s}(\rmd x) = \sumwght{s}^{-1} \sum_{i=1}^N \ewght{s}{i} \delta_{\epart{s}{i}}(\rmd x) \eqsp,
\end{equation}
where $\sumwght{s}= \sum_{i=1}^N \ewght{s}{i}$ and $\delta_x$ denotes the Dirac mass located at $x$. There are several ways of producing such weighted samples $\{(\epart{s}{i}, \ewght{s}{i})\}_{i=1}^N$;
see \cite{doucet:defreitas:gordon:2001}, \cite{liu:2001}, \cite{cappe:moulines:ryden:2005}, and the references therein. Most of these algorithms can be recasted into the common unifying framework of the {\em auxiliary particle filter}.
Let $\{\epart{0}{i}\}_{i=1}^N$ be \iid\ random variables such that $\epart{0}{i} \sim \XinitIS{0}$ and set $\ewght{0}{i}=\frac{d \Xinit}{d \XinitIS{0}}(\epart{0}{i}) g_0(\epart{0}{i})$. By classical importance sampling, the weighted sample $\{ (\epart{0}{i}, \ewght{0}{i})\}_{i = 1}^N$ targets the distribution $\post[\Xinit]{0}{0}$. Assume now that the weighted sample $\{ (\epart{s-1}{i}, \ewght{s-1}{i})\}_{i = 1}^N$ targets $\post[\Xinit]{s-1}{s-1}$, \ie\ for $h \in \mcb{\Xset}$,
$ \sumwght{s-1}^{-1} \sum_{i=1}^N \ewght{s-1}{i} h(\epart{s-1}{i})$ is an estimate of $\int \post[\Xinit]{s-1}{s-1}(\rmd x) h(x)$. We may approximate $\post[\Xinit]{s}{s}$ by replacing $\post[\Xinit]{s-1}{s-1}$ in the forward filtering recursion
\begin{equation}
\label{eq:forward-filtering-recursion}
\post[\Xinit]{s}{s}(f) \propto \int \post[\Xinit]{s-1}{s-1}(\rmd x_{s-1}) q(x_{s-1}, x_s) g_s(x_s) f(x_s) \, \rmd x_s
\end{equation}
by its particle approximation $\post[\Xinit][hat]{s-1}{s-1}$, leading to the target distribution
\begin{equation}
\label{eq:target-forward-filtering}
\post[\Xinit][tar]{s}{s}(\rmd x) \propto \sum_{i=1}^N \ewght{s-1}{i} q(\epart{s-1}{i},x) g_s(x) \, \rmd x \eqsp.
\end{equation}
To avoid an $O(N^2)$ algorithm, \cite{pitt:shephard:1999} introduces an \emph{auxiliary variable} corresponding to the selected particle index and target instead the probability density
\begin{equation}
\label{eq:target-auxiliary}
\post[\Xinit][\aux]{s}{s}(i,x_s) \propto \ewght{s-1}{i} q(\epart{s-1}{i}, x_s) g_s(x_s)
\end{equation}
on the product space $\{1,\dots,N\} \times \Xset$. Since $\post[\Xinit][tar]{s}{s}$ is the marginal distribution of $\post[\Xinit][\aux]{s}{s}$ with respect to the particle index, we may sample from $\post[\Xinit][tar]{s}{s}$ by simulating instead a set $\{(I_s^i, \epart{s}{i})\}_{i=1}^N$ of indices and particle positions from an instrumental distribution having probability density
\begin{equation}
\label{eq:instrumental-distribution-filtering}
\instrpostaux{s}{s}(i,x_s) \propto \ewght{s-1}{i} \adjfunc{s}{s}{\epart{s-1}{i}} \kiss{s}{s}(\epart{s-1}{i},x_s) \eqsp,
\end{equation}
where $\{ \adjfunc{s}{s}{\epart{s-1}{i}} \}_{i = 1}^N$ are so-called \emph{adjustment multiplier weights} and $\kiss{s}{s}$ is the \emph{proposal transition} density function.
Each draw $(I_s^i, \epart{s}{i})$ is assigned to the weight
\begin{equation}
\label{eq:weight-update-filtering}
\ewght{s}{i} \eqdef
\frac{q(\epart{s-1}{I_s^i},\epart{s}{i}) g_s(\epart{s}{i})}{\adjfunc{s}{s}{\epart{s-1}{I_s^i}} \kiss{s}{s}(\epart{s-1}{I_s^i},\epart{s}{i})}
\eqsp,
\end{equation}
 which is proportional to $\post[\Xinit][\aux]{s}{s}(I_s^i,\epart{s}{i})/\instrpostaux{s}{s}(I_s^i,\epart{s}{i})$.
 Hereafter, the indices are discarded and $\left\{ (\epart{s}{i}, \ewght{s}{i} ) \right\}_{i=1}^N$ is taken as an
 approximation of the target distribution $\post[\Xinit]{s}{s}$.
The simplest choice, yielding to the so-called \emph{bootstrap particle filter algorithm} proposed by \cite{gordon:salmond:smith:1993}, consists in setting, for all $x \in \Xset$, $\adjfunc{s}{s}{x} \equiv 1$ and $\kiss{s}{s}(x,\cdot)= q(x,\cdot)$. A more appealing choice from a theoretical standpoint --but often computationally costly-- consists in setting
$\adjfunc[fully]{s}{s}{x}= \int q(x,x_s) g_s(x_s) \, \rmd x_s$ and
\[
\kiss[fully]{s}{s}(x,x_s) = \frac{q(x,x_s) g_s(x_s)}{\adjfunc[fully]{s}{s}{x}} \eqsp.
\]
In this case, the importance weights $\{\ewght{s}{i}\}_{i=1}^N$  are all unity and the auxiliary particle filter is said to be \emph{fully adapted}. Sampling from the fully adapted version of the auxiliary particle filter is in general difficult;
the general method, based on the auxiliary accept-reject principle, proposed by \cite{huerzeler:kuensch:1998}
and \cite{kuensch:2005} for sampling from these distributions is, with few exceptions, computationally involved.
Other choices are discussed in \cite{douc:moulines:olsson:2008} and \cite{cornebise:moulines:olsson:2008}.

\subsection{The Forward Filtering Backward Smoothing algorithm}

Following \cite{godsill:doucet:west:2004}, the smoothing distribution can be approximated by filtering passes in the forward as well as the backward directions. Firstly, the particle filter is executed, while storing the weighted sample $\left\{ \left(\epart{t}{i}, \ewght{t}{i} \right)\right\}_{i=1}^N$, $1 \leq t \leq T$; secondly, starting with the particle approximation of the filtering distribution at time $T$, the importance weights are recursively updated backwards in time by combining particle estimates of the fixed interval smoothing distribution $\post[\Xinit]{s+1:T}{T}$  and the filtering distribution estimate $\post[\Xinit]{s}{s}$.
For $1 \leq s\leq t \leq T$, define $\epartpredIndex{s:t}{i} \eqdef (\epartpredIndex{s}{i}, \dots, \epartpredIndex{t}{i})$. An approximation
\begin{equation}
\label{eq:particle-approximation-stransback}
\BK[\Xinit][hat]{s}(x_{s+1}, \rmd x_s) = \sum_{i=1}^N \frac{\ewght{s}{i} q(\epart{s}{i}, x_{s+1})}{\sum_{\ell=1}^N \ewght{s}{\ell} q(\epart{s}{\ell}, x_{s+1})} \delta_{\epart{s}{i}}(\rmd x_s)
\end{equation}
of the backward kernel can be obtained by revising the weights $\{ \ewght{s}{i} \}_{i = 1}^N$ without moving the particles $\{ \epart{s}{i} \}_{i=1}^N$. If in addition the joint smoothing distribution $\post[\Xinit]{s+1:T}{T}$ is approximated at time $s + 1$ using the weighted sample $\{ (\epartpredIndex{s+1:T}{j},\swghtIndex{s+1}{T}{j}) \}$, $j_{s+1:T} \in \{1,\dots,N\}^{T-s}$, \ie\
\begin{equation}\label{eq:backApprox}
\post[\Xinit][hat]{s+1:T}{T} (\rmd x_{s+1:T}) \propto \sum_{j_{s+1:T} =1}^{N} \swghtIndex{s+1}{T}{j} \delta_{\epartpredIndex{s+1:T}{j}}(\rmd x_{s+1:T}) \eqsp,
\end{equation}
then we may substitute this and the approximation \eqref{eq:particle-approximation-stransback} of the backward kernel
 into \eqref{eq:smoothing:backw_decomp} to obtain
\[
\post[\Xinit][hat]{s:T}{T}(\rmd x_{s:T}) \propto\sum_{j_{s:T}=1}^{N} \swghtIndex{s}{T}{j}  \delta_{\epartpredIndex{s:T}{j}}(\rmd x_{s:T}) \eqsp ,
\]
where the new weight is recursively updated according to
\begin{equation}
\label{eq:FFBS-joint-weight-update}
\swghtIndex{s}{T}{j}= \frac{\ewght{s}{j_s} q(\epart{s}{j_s},\epart{s+1}{j_{s+1}})}{\sum_{\ell=1}^N \ewght{s}{\ell} q(\epart{s}{\ell},\epart{s+1}{j_{s+1}})} \swghtIndex{s+1}{T}{j}\eqsp.
\end{equation}
The estimator of the joint smoothing distribution may be rewritten as
\begin{equation}
\label{eq:forward-filtering-backward-smoothing}
\post[\Xinit][hat]{s:T}{T}(h) \eqdef \sum_{i_{s:T}=1}^N \frac{\swghtcIndex{s:T}{i} \ewght{T}{i_T}}{\sum_{j_{s:T}=1}^N  \swghtcIndex{s:T}{j} \ewght{T}{j_T}}\; h(\epartpredIndex{s:T}{i})
\end{equation}
where
\begin{equation}
\label{eq:definition-swghtcIndex}
\swghtcIndex{s:t}{i} \eqdef \prod_{u=s+1}^t \frac{\swght{u-1}{i_{u-1}}q(\epartpredIndex{u-1}{i},\epartpredIndex{u}{i})}{\sum_{\ell=1}^N \swght{u-1}{\ell}q(\epartpred{u-1}{l},\epartpredIndex{u}{i})} \eqsp,
\end{equation}
with the convention $\prod_a^b=1$ if $a>b$ so that $\swghtcIndex{s:s}{i}=1$. Since $\sum_{j_{s:T-1}} \swghtcIndex{s:t}{j}=1$, $\post[\Xinit][hat]{s:T}{T}(h)$ may be alternatively expressed as
\begin{equation}
\label{eq:forward-filtering-backward-smoothing-alternate}
\post[\Xinit][hat]{s:T}{T}(h) =  \sum_{i_{s:T}=1}^N \frac{\swghtcIndex{s:T}{i} \ewght{T}{i_T}}{\sum_{j_{T}=1}^N  \ewght{T}{j_T}}\; h(\epartpredIndex{s:T}{i}) \eqsp.
\end{equation}
This estimate of the joint smoothing distribution may be understood as an approximate importance sampling estimator.
The estimator $\post[\Xinit][hat]{s:T}{T}(h)$ is highly impractical, because  its support is  the  set of
$N^{T-s+1}$ possible particle paths $\{ \epartpredIndex{s:T}{j} \}$. Nevertheless, this estimator plays a key role in the
theoretical derivations.

The importance weight of these path particles is computed as if the path particle  $\epartpredIndex{s:T}{j}$
were simulated by drawing forward in time, for $s < t \leq T$, $\epart{t}{j_t}$ in the set  $\{ \epart{t}{i} \}_{i=1}^N$,
conditionally independently from $\{\epartpredIndex{s:t-1}{j} \}$ from the distribution
\begin{equation}
\label{eq:pseudo-proposal}
\sumwght{t-1}^{-1} \sum_{\ell=1}^N \ewght{t-1}{\ell}q(\epartpred{t-1}{\ell},\cdot) \eqsp,  \quad  i=1,\dots,N \eqsp,
\end{equation}
which approximates the predictive distribution $\post[\Xinit]{t}{t-1}$. Of course, the distribution of $\epartpredIndex{s:T}{j}$ is not exactly the product of the marginal distribution \eqref{eq:pseudo-proposal}, because the particle position are not independent -this approximation would be approximately correct for a finite block of particles selected randomly, using propagation of chaos property; see \eg\ \cite[chapter 8]{delmoral:2004} -. This is why standard results on importance sampling estimators cannot be applied to that context.

Most often, it is not required to compute the joint smoothing definition but rather the marginal smoothing distribution $\post[\Xinit]{s}{T}$ (or more generally some fixed dimensional
 marginal of the joint smoothing, $\post[\Xinit]{s:s+\Delta}{T}$ for a positive integer $\Delta$).
 Approximations of the marginal smoothing distributions may be obtained by associating to the set of particle
$\{ \epartpredIndex{s}{j} \}$, $j_{s} \in \{1,\dots,N\}^{\Delta+1}$ the weights  obtained by marginalizing
the joint smoothing weights $\{ \swghtIndex{s}{T}{j} \}$ over the components $j_{s+1:T} \in \{1,\dots,N\}^{T-s-\Delta+1}$,
$\ewght{s|T}{j_{s}}= \sum_{j_{s+1:T}=1}^N \swghtIndex{s}{T}{j}$. It is easily seen that these marginal weights can be recursively
updated as follows:
\begin{equation}
\label{eq:FFBS-marginal-weight-update}
\ewght{s|T}{i}=\sum_{j=1}^N \frac{
\ewght{s}{i}  q(\epart{s}{i},\epart{s+1}{j})}{\sum_{\ell=1}^N \ewght{s}{\ell} q(\epart{s}{\ell},\epart{s+1}{j})}\ewght{s+1|T}{j} \eqsp, \quad i=1,\dots,N
\end{equation}
The complexity of this estimator of the marginal smoothing distribution is $O(N^{2}T)$, which is manageable only if  the number of particles
is moderate. When the dimension of the input space is not too large, this computational cost can considerably reduced to $N \log(N)$,
but at the price of truncating the distribution and therefore introducing some amount of bias (see for example \cite{klaas:briers:defreitas:doucet:maskell:lang:2006}).
Note that, in certain specific scenarios (such as   discrete  Markov chains over large state space with sparse transition matrix),
the complexity can even be reduced to $O(NT)$.

\subsection{The Forward Filtering Backward Simulation}
\label{subsec:FFBSi}
Another way of understanding \eqref{eq:forward-filtering-backward-smoothing}
consists in noting that the importance weight
\eqref{eq:FFBS-joint-weight-update} is a probability distribution over $\{1,\dots,N\}^{T-s}$; more precisely,
$\swghtIndex{s+1}{T}{j}= \CPP{J_{s+1:T}= j_{s+1:T}}{\mcf{T}}$, where $\mcf{s}\eqdef \sigma\{(\epart{t}{i}, \ewght{t}{i}); 0\leq t\leq s,
1\leq i \leq N\}$ and $\{ J_u \}_{u=s}^T$ is a reversed Markov chain with a final distribution $\ewght{T}{i}/\sumwght{T}$, $i=1,\dots,N$ and
backward transition matrix $\{ \hat{B}_{i,j} \}_{i,j=1}^N$,  $\CPP{J_{s}=j}{\mcf{T} \vee \sigma(J_{s+1})} = \hat{B}_{J_s,j}$, with
\begin{equation}
\label{eq:distribution-weight}
\hat{B}_{i,j}= \frac{\ewght{s}{j} q(\epart{s}{j},\epart{s+1}{i})}{\sum_{u=1}^N \ewght{s}{u} q(\epart{s}{u},\epart{s+1}{i})} \eqsp, \quad i,j=1,\dots, N \quad \eqsp.
\end{equation}
With these definitions, the joint smoothing distribution may be written as the conditional expectation
\begin{equation}\label{eq:EspCond}
\post[\Xinit][hat]{s:T}{T}(h)= \CPE{h\left(\epartpredIndex{s:T}{J}\right)}{\mcf{T}} \eqsp.
\end{equation}
The idea of simulating the indices $J_{s:T}$ backward in time to draw approximately from the smoothing distribution $\post[\Xinit]{s:T}{T}$,
has been proposed in \cite{godsill:doucet:west:2004} (Algorithm 1, pp. 158). This algorithm proceeds recursively backward in time as follows.
At time $T$, we draw conditionally independently from  an $N$ indices $\{J^{\ell}_T \}_{\ell=1}^N$ from the distribution
$\{\ewght{T}{T}{\ell}\}_{\ell=1}^N$ (for ease of notations, we draw the same number of particles in the forward and backward passes, but there
is no need to do that).
Given now a $N$ sample $\{\chunk{J}{s+1}{T}^\ell \}_{\ell=1}^N \in \{1,\dots,N\}^{T-s}$, we draw conditionally independently
$J^{\ell}_{s} \in \{1,\dots,N\}$, $\ell=1,\dots,N$ from the distributions $\left\{\hat{B}_{j,J^{\ell}_{s+1}} \right\}_{j=1}^N$.
This algorithm is referred in the sequel to as the forward filtering backward simulation algorithm (FFBSi). This sample
yields to the following (practical) estimator of the joint fixed-interval smoothing distribution:
\begin{equation}
\label{eq:FFBSi:estimator}
\post[\Xinit][tilde]{s:T}{T}(h) = N^{-1} \sum_{\ell=1}^N h \left(\epart{s:T}{\chunk{J}{s}{T}^\ell}\right) \eqsp, \quad h \in \mcb{\Xset^{T-s+1}} \eqsp.
\end{equation}
The computational complexity for each individual realization is $O(N)$ at each time step, so the overall computational effort to
estimate $\post[\Xinit][tilde]{0:T}{T} $ is therefore $O(N^2T)$. Using the methods introduced by \cite{klaas:briers:defreitas:doucet:maskell:lang:2006},
this complexity can be further reduced to $O(N \log(N) T)$, but here again at the price of some additional approximations.
It is easy to modify this algorithm to make it linear in $N$. Assume that the transition kernel $q$ is bounded, $q(x,x') \leq \esssup{q}$.
Since $\ewght{s}{j} q(\epart{s}{j},\epart{s+1}{i}) \leq \esssup{q} \ewght{s}{j}$, for any $i,\ j \in \{1, \dots, N\}$, we may sample \eqref{eq:distribution-weight} using the accept-reject mechanism. For any $\ell=1,\dots,N$, we sample independently
indices $I^{\ell,u}_s$, $u=1,2,\dots$  from the  distribution $\{ \sumwght{s}^{-1} \ewght{s}{j} \}_{j=1}^N$  and uniform random variables $U_s^{\ell,u}$
on $[0,1]$ and let $J_s^{\ell}= I_s^{\ell,\tau_{s}^{\ell}}$  where $\tau_s^{\ell}$ is the first index $u$ for which $U_{s}^{\ell,u} \leq q(\epart{s}{I_s^{\ell,u}},\epart{s+1}{J_{s+1}^\ell})/\esssup{q}$.
The complexity of the resulting algorithm is linear instead of quadratic in $N$.

\section{Exponential deviation inequality}

\label{sec:exponentialFFBS}
In this section, we establish the properties of the forward filtering backward smoothing algorithm. We first establish a non asymptotic deviation inequality.
For any function $f: \Xset^d \to \rset$, we define $\esssup{f}= \sup_{x\in\Xset^d} |f(x)|$ and $\oscnorm{f}= \sup_{(x,x') \in \Xset^d \times \Xset^d} |f(x)-f(x')|$. Denote $\bar{\nset}= \nset \cup \{\infty\}$ and consider the following assumptions. We denote by $T$ the horizon, which can be either a finite integer or infinite.
\begin{assum}\label{assum:bound-likelihood}
\ $\sup_{0 \leq t \leq T} \esssup{g_t} <\infty$.
\end{assum}
Define for $t \geq 0$ the importance weight functions:
\begin{equation}
\label{eq:definition-weightfunction-forward}
\ewghtfunc{0}(x)= \frac{d \Xinit}{d \XinitIS{0}}(x) g_0(x) \quad \text{and} \quad \ewghtfunc{t}(x,x') \eqdef \frac{q(x,x') g_t(x')}{\adjfunc{t}{t}{x} \kiss{t}{t}(x,x')} \eqsp, t \geq 1 \eqsp.
\end{equation}
\begin{assum}
\label{assum:borne-FFBS}
\ $\sup_{0 \leq t \leq T} \esssup{\adjfunc{t}{t}{}}  < \infty$ and
$\sup_{0 \leq t \leq T} \esssup{\ewghtfunc{t}}  < \infty$.
\end{assum}
The latter assumption is rather mild. It holds in particular under (A\ref{assum:bound-likelihood}) for the bootstrap filter ($q=\kiss{t}{t}$ and $\adjfunc{t}{t}{}\equiv 1$). It automatically holds in the fully adapted case ($\ewghtfunc{t} \equiv 1$).

The first step in our proof consists in obtaining an exponential deviation inequality of Hoeffding type for the
auxiliary particle approximations of the forward filtering distribution $\post[\Xinit]{t}{t}$.
Such results can be adapted from \cite[Chapter 7]{delmoral:2004}, using the Feynman-Kac representation of the auxiliary filter.
For the sake of completeness, we prove these results explicitly.
By convention, we set $\post[\Xinit][]{0}{-1}= \Xinit$ and $\adjfunc{0}{0}{} \equiv 1$.
\begin{prop}
\label{prop:exponential-inequality-forward}
Assume that A\ref{assum:bound-likelihood}--\ref{assum:borne-FFBS}. Then, for all $t \in \{0,\dots,T\}$,
there exist $0<B,\ C<\infty$ such that for all $N$, $\epsilon > 0$, and all measurable functions $h$,
\begin{equation}
\label{eq:Hoeffding-Filtering-unnormalized}
\PP\left[ \left| N^{-1}\sum_{i=1}^N \swght{t}{i} h(\epartpred{t}{i}) - \frac{\post[\Xinit]{t}{t-1}\left(g_t h\right)}{\post[\Xinit]{t-1}{t-1}(\adjfunc{t}{t}{})}\right| \geq \epsilon \right]
\leq B \rme^{-C N \epsilon^2 / \esssup{h}^2} \eqsp,
\end{equation}
\begin{equation}
\PP\left[ \left| \post[\Xinit][hat]{t}{t}(h) - \post[\Xinit]{t}{t}(h) \right| \geq \epsilon \right] \leq B \rme^{-C N \epsilon^2 / \oscnorm[2]{h}} \eqsp, \label{eq:Hoeffding-Filtering-normalized}
\end{equation}
where the weighted sample $\{ (\epart{t}{i},\ewght{t}{i}) \}_{i=1}^N$ is defined in \eqref{eq:weight-update-filtering}.
\end{prop}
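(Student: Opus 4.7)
The plan is to proceed by induction on $t \in \{0, 1, \dots, T\}$, establishing the unnormalized inequality \eqref{eq:Hoeffding-Filtering-unnormalized} and the normalized one \eqref{eq:Hoeffding-Filtering-normalized} jointly. At the base case $t = 0$, the particles $\{\epart{0}{i}\}_{i=1}^N$ are i.i.d.\ draws from $\XinitIS{0}$ and the weights $\ewght{0}{i} = \ewghtfunc{0}(\epart{0}{i})$ are uniformly bounded by A\ref{assum:borne-FFBS}. Hence $\{\ewght{0}{i} h(\epart{0}{i})\}_{i=1}^N$ are i.i.d.\ variables bounded by $\esssup{\ewghtfunc{0}} \esssup{h}$, with common expectation $\Xinit(g_0 h)$, so Hoeffding's inequality directly delivers \eqref{eq:Hoeffding-Filtering-unnormalized} under the conventions $\post[\Xinit]{0}{-1} = \Xinit$ and $\adjfunc{0}{0}{} \equiv 1$.

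For the inductive step on \eqref{eq:Hoeffding-Filtering-unnormalized}, I would condition on the filtration $\mcf{t-1} \eqdef \sigma\{(\epart{u}{i}, \ewght{u}{i}) \colon 0 \leq u \leq t-1,\, 1 \leq i \leq N\}$. By construction, the pairs $\{(I_t^i, \epart{t}{i})\}_{i=1}^N$ are, conditionally on $\mcf{t-1}$, i.i.d.\ with common law $\instrpostaux{t}{t}$; a direct calculation from \eqref{eq:instrumental-distribution-filtering} and \eqref{eq:weight-update-filtering} yields
$$\CPE{\swght{t}{i} h(\epart{t}{i})}{\mcf{t-1}} = \frac{\post[\Xinit][hat]{t-1}{t-1}(Q(g_t h))}{\post[\Xinit][hat]{t-1}{t-1}(\adjfunc{t}{t}{})} \eqsp,$$
where $Q(g_t h)(x) \eqdef \int q(x, x') g_t(x') h(x') \, \rmd x'$. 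Since the summands are bounded by $\esssup{\ewghtfunc{t}} \esssup{h}$ under A\ref{assum:borne-FFBS}, Hoeffding's inequality applied conditionally on $\mcf{t-1}$ controls the deviation of $N^{-1} \sum_i \swght{t}{i} h(\epart{t}{i})$ from its conditional mean. The induction hypothesis \eqref{eq:Hoeffding-Filtering-normalized} at time $t-1$, applied to the bounded test functions $Q(g_t h)$ and $\adjfunc{t}{t}{}$, then bounds the deviation of that conditional mean from the target $\post[\Xinit]{t}{t-1}(g_t h)/\post[\Xinit]{t-1}{t-1}(\adjfunc{t}{t}{}) = \post[\Xinit]{t-1}{t-1}(Q(g_t h))/\post[\Xinit]{t-1}{t-1}(\adjfunc{t}{t}{})$. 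A union bound over the two events completes the proof of \eqref{eq:Hoeffding-Filtering-unnormalized} at time $t$.

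Finally, the normalized version \eqref{eq:Hoeffding-Filtering-normalized} at time $t$ is derived from the unnormalized one through the ratio decomposition
$$\post[\Xinit][hat]{t}{t}(h) - \post[\Xinit]{t}{t}(h) = \frac{N^{-1} \sum_i \swght{t}{i} [h(\epart{t}{i}) - \post[\Xinit]{t}{t}(h)]}{N^{-1} \sum_i \swght{t}{i}} \eqsp.$$
I would apply \eqref{eq:Hoeffding-Filtering-unnormalized} to the numerator with the centred function $\tilde h \eqdef h - \post[\Xinit]{t}{t}(h)$ --whose supremum norm can be taken bounded by $\oscnorm{h}/2$ after an irrelevant constant shift, since the left-hand side is invariant under $h \mapsto h + c$; this is what converts $\esssup{h}^2$ into $\oscnorm[2]{h}$-- and to note that its target is $\post[\Xinit]{t}{t-1}(g_t \tilde h)/\post[\Xinit]{t-1}{t-1}(\adjfunc{t}{t}{}) = 0$ because $\post[\Xinit]{t}{t}(h) = \post[\Xinit]{t}{t-1}(g_t h)/\post[\Xinit]{t}{t-1}(g_t)$. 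The denominator is controlled by the same inequality applied to $h \equiv 1$, whose strictly positive limit $\post[\Xinit]{t}{t-1}(g_t)/\post[\Xinit]{t-1}{t-1}(\adjfunc{t}{t}{})$ isolates it from zero except on an exponentially small event. The main technical annoyance will be the clean treatment of this ratio: on the exponentially small event where the denominator falls near zero one absorbs the event into the constant $B$, while elsewhere the bound is routine. All resulting constants $B$, $C$ depend on $t$ but neither on $N$ nor on $\epsilon$, as required.
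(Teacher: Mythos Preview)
Your proposal is correct and follows essentially the same inductive strategy as the paper: conditional i.i.d.\ structure plus Hoeffding for the martingale increment, the induction hypothesis at $t-1$ to control the conditional mean, and a ratio argument for \eqref{eq:Hoeffding-Filtering-normalized}. The only cosmetic difference is that the paper packages every ratio step into a dedicated lemma (Lemma~\ref{lem:inegEssentielle}), which uses the decomposition $|a_N/b_N - c_N/d_N| \leq b^{-1}|a_N/b_N - c_N/d_N|\,|b-b_N| + b^{-1}|a_N - (c_N/d_N)b_N|$ together with an a priori bound $|a_N/b_N - c_N/d_N| \leq M$; this cleanly avoids the ``denominator near zero'' event you plan to absorb into $B$.
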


\begin{proof}
We prove \eqref{eq:Hoeffding-Filtering-unnormalized} and \eqref{eq:Hoeffding-Filtering-normalized} together by induction on $t\geq 0$.
 First note that, by construction, $\{ (\epart{t}{i}, \ewght{t}{i}) \}_{1\leq i \leq N}$ are i.i.d. conditionally to the $\sigma$-field $\mcf{t-1}\eqdef \sigma\{(\epart{s}{i}, \ewght{s}{i}); 0\leq s\leq t-1, 1\leq i \leq N\}$.  Under (A\ref{assum:borne-FFBS}), we may therefore apply the  Hoeffding inequality, which implies,
\begin{equation}\label{eq:relat0}
\PP\left[\left| N^{-1} \sum_{i=1}^N \swght{t}{i}h(\epartpred{t}{i}) -\CPE{N^{-1} \sum_{i=1}^N \swght{t}{i}h(\epartpred{t}{i})}{\mcf{t-1}} \right| > \epsilon \right] \leq 2 \rme^{-2 N \epsilon^2 / (\esssup[2]{\ewghtfunc{t}} \esssup{h}^2)} \eqsp.
\end{equation}
For $t=0$,
\begin{equation*}
\CPE{N^{-1}\sum_{i=1}^N \ewght{t}{i} h(\epart{t}{i})}{\mcf{t-1}}  = \CPE{\ewght{t}{1} h(\epart{t}{1})}{\mcf{t-1}} =  \Xinit (g_0 h) = \post[\Xinit][]{0}{-1}(g_0 h) \eqsp.
\end{equation*}
Thus, \eqref{eq:Hoeffding-Filtering-normalized} follows by Lemma \ref{lem:inegEssentielle} applied with $a_N= N^{-1} \sum_{i=1}^N \swght{0}{i} h(\epartpred{0}{i})$, $b_N= N^{-1} \sum_{i=1}^N \swght{0}{i}$, $c_N=0$ and
$b=\beta=\Xinit(g_0)$, conditions \eqref{item:inegEssentielle-borne}, \eqref{item:inegEssentielle-expo1} and \eqref{item:inegEssentielle-expo2} being obviously satisfied. For $t\geq 1$, we prove \eqref{eq:Hoeffding-Filtering-unnormalized} by deriving an exponential inequality for $\CPE{N^{-1}\sum_{i=1}^N \ewght{t}{i} h(\epart{t}{i})}{\mcf{t-1}}$ thanks to the induction assumption.
It follows from the definition that
\begin{align}
\nonumber
&\CPE{N^{-1}\sum_{i=1}^N \ewght{t}{i} h(\epart{t}{i})}{\mcf{t-1}}  = \CPE{\ewght{t}{1} h(\epart{t}{1})}{\mcf{t-1}} \\
\nonumber
& \quad =  \sum_{i=1}^N \int
\frac{\ewght{t-1}{i} \adjfunc{t}{t}{\epart{t-1}{i}} \kiss{t}{t}(\epart{t-1}{i},x)}{\sum_{\ell=1}^N \ewght{t-1}{\ell} \adjfunc{t}{t}{\epart{t-1}{\ell}}}
\frac{q(\epart{t-1}{i},x) g_t(x)}{\adjfunc{t}{t}{\epart{t-1}{i}} \kiss{t}{t}(\epart{t-1}{i},x)}
 h(x) \rmd x \eqsp, \\
\label{eq:relat1}
& \quad =
\frac{  \sum_{i=1}^N \ewght{t-1}{i} \int Q(\epart{t-1}{i},\rmd x) g_t(x) h(x) }{\sum_{\ell=1}^N \ewght{t-1}{\ell} \adjfunc{t}{t}{\epart{t-1}{\ell}}}
\end{align}
We apply Lemma \ref{lem:inegEssentielle} by successively checking conditions \eqref{item:inegEssentielle-borne}, \eqref{item:inegEssentielle-expo1} and \eqref{item:inegEssentielle-expo2} with
\begin{align*}
\begin{cases}
 a_N \eqdef \sumwght{t}^{-1}\sum_{i=1}^N \ewght{t-1}{i} \int Q(\epart{t-1}{i},\rmd x) g_t(x) h(x) \\
 b_N \eqdef \sumwght{t}^{-1} \sum_{\ell=1}^N \ewght{t-1}{\ell} \adjfunc{t}{t}{\epart{t-1}{\ell}} \\
 c_N \eqdef \post[\Xinit]{t-1}{t-1}\left[\int Q(\cdot,\rmd x)g_t(x) h(x)\right] = \post[\Xinit]{t}{t-1}\left(g_t h\right)\\
 d_N \eqdef  b \eqdef \beta \eqdef \post[\Xinit]{t-1}{t-1}(\adjfunc{t}{t}{})\\
\end{cases}
\end{align*}
We have that
\begin{align*}
& \left|\frac{a_N}{b_N} \right|= |\CPE{\ewght{t}{1} h(\epart{t}{1})}{\mcf{t-1}}|\leq \esssup{\ewghtfunc{t}} \; \esssup{h} \\
& \left|\frac{c_N}{d_N} \right|= \left|\left. \post[\Xinit]{t-1}{t-1}\left[\adjfunc{t}{t}{\cdot}\int \frac{q(\cdot, x)g_t(x)}{\adjfunc{t}{t}{\cdot}\kiss{t}{t}(\cdot,x)} \kiss{t}{t}(\cdot,x) h(x)\ \rmd x\right] \right/ \post[\Xinit]{t-1}{t-1}(\adjfunc{t}{t}{}) \right| \\
& \quad \leq \esssup{\ewghtfunc{t}}\; \esssup{h} \eqsp.
\end{align*}
Thus, condition \eqref{item:inegEssentielle-borne} is satisfied.
Now, assume that the induction assumption \eqref{eq:Hoeffding-Filtering-normalized} holds where $t$ is replaced by $t-1$. Then,
$$
a_N -\frac{c_N}{d_N}b_N = \sumwght{t}^{-1}\sum_{i=1}^N \ewght{t-1}{i} H(\epart{t-1}{i})
$$
with $H(\epart{t-1}{i}) \eqdef  \int Q(\epart{t-1}{i},\rmd x) g_t(x) h(x)-\frac{\post[\Xinit]{t-1}{t-1}\left[\int Q(\cdot,\rmd x)g_t(x) h(x)\right]}{\post[\Xinit]{t-1}{t-1}(\adjfunc{t}{t}{})}\adjfunc{t}{t}{\epart{t-1}{i}}$.
And by noting that $\post[\Xinit]{t-1}{t-1}(H)=0$,
exponential inequalities for $a_N-(c_N/d_N)b_N$ and $b_N-b$ are then directly derived from the induction assumption under (A\ref{assum:bound-likelihood}-\ref{assum:borne-FFBS}). Thus Lemma \ref{lem:inegEssentielle} applies and finally \eqref{eq:Hoeffding-Filtering-unnormalized} is proved for $t\geq 1$.

To conclude, it remains to see why \eqref{eq:Hoeffding-Filtering-unnormalized} implies \eqref{eq:Hoeffding-Filtering-normalized}. Without loss of generality, we assume that $\post[\Xinit]{t}{t}(h)=0$ holds. An exponential inequality for $\sumwght{t}^{-1} \sum_{i=1}^N \swght{t}{i} h(\epartpred{t}{i})$ is obtained by applying Lemma 1 with
$$
\begin{cases}
a_N\eqdef N^{-1} \sum_{i=1}^N \swght{t}{i} h(\epartpred{t}{i}) \\
b_N\eqdef N^{-1} \sum_{i=1}^N \swght{t}{i}\\
c_N\eqdef 0\\
b \eqdef \beta \eqdef \frac{\post[\Xinit]{t-1}{t-1}\left[\int Q(\cdot,\rmd x)g_t(x)\right]}{\post[\Xinit]{t-1}{t-1}(\adjfunc{t}{t}{})}
\end{cases}
$$
where conditions \eqref{item:inegEssentielle-borne}, \eqref{item:inegEssentielle-expo1}, \eqref{item:inegEssentielle-expo2} are obviously derived from the \eqref{eq:Hoeffding-Filtering-unnormalized} since the condition $\post[\Xinit]{t}{t}(h)=0$ directly implies that
$$
\post[\Xinit]{t-1}{t-1}\left[\int Q(\cdot,\rmd x)g_t(x)h(x)\right]=0
$$
\end{proof}

Using the exponential deviation inequality for the auxiliary particle approximation of the filtering density, it is now possible to derive an exponential inequality for the
forward filtering backward smoothing approximation of the joint smoothing distribution.

\begin{thm}
\label{thm:Hoeffding-FFBS}
Assume A\ref{assum:bound-likelihood}--\ref{assum:borne-FFBS}.
Let $1\leq s \leq T$. There exist $0<B,\ C<\infty$ such that for all $N$, $\epsilon > 0$, and all measurable functions $h$,
\begin{align}
\label{eq:Hoeffding-1}
&\PP\left[ \left| \post[\Xinit][hat]{s:T}{T}(h) -  \post[\Xinit]{s:T}{T}(h) \right| \geq \epsilon \right] \leq B \rme^{-C N \epsilon^2 / \oscnorm[2]{h}} \eqsp, \\
\label{eq:Hoeffding-2}
&\PP\left[ \left| \post[\Xinit][tilde]{s:T}{T}(h) -  \post[\Xinit]{s:T}{T}(h) \right| \geq \epsilon \right] \leq B \rme^{-C N \epsilon^2 / \oscnorm[2]{h}} \eqsp.
\end{align}
where $\post[\Xinit][hat]{s:T}{T}(h)$ and $\post[\Xinit][tilde]{s:T}{T}(h)$ are defined in \eqref{eq:forward-filtering-backward-smoothing} and \eqref{eq:FFBSi:estimator}.
\end{thm}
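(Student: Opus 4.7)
The plan is to reduce \eqref{eq:Hoeffding-2} to \eqref{eq:Hoeffding-1} by a conditional Hoeffding argument, and to prove \eqref{eq:Hoeffding-1} by backward induction on $s$ starting from $s = T$, where Proposition \ref{prop:exponential-inequality-forward} supplies the base case. For the reduction, I note that, conditionally on $\mcf{T}$, the trajectories $\epart{s:T}{\chunk{J}{s}{T}^\ell}$, $\ell = 1, \dots, N$, used in the FFBSi estimator \eqref{eq:FFBSi:estimator} are i.i.d. with mean $\post[\Xinit][hat]{s:T}{T}(h)$ by \eqref{eq:EspCond}, and take values in a set of $h$-diameter at most $\oscnorm{h}$. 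The classical Hoeffding inequality applied conditionally on $\mcf{T}$ yields
\[
\CPP{|\post[\Xinit][tilde]{s:T}{T}(h) - \post[\Xinit][hat]{s:T}{T}(h)| \geq \epsilon/2}{\mcf{T}} \leq 2 \exp\bigl(-N \epsilon^2 / (2 \oscnorm[2]{h})\bigr),
\]
and taking expectation and combining with \eqref{eq:Hoeffding-1} at threshold $\epsilon/2$ via a union bound produces \eqref{eq:Hoeffding-2}.

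For \eqref{eq:Hoeffding-1}, the base case $s = T$ is exactly \eqref{eq:Hoeffding-Filtering-normalized} applied to $\post[\Xinit][hat]{T}{T}$. For the inductive step, assume the bound holds at $s+1$. Using the backward decomposition \eqref{eq:smoothing:backw_decomp} and the particle backward kernel \eqref{eq:particle-approximation-stransback}, I would write $\post[\Xinit][hat]{s:T}{T}(h) - \post[\Xinit]{s:T}{T}(h) = R_1 + R_2$ with
\begin{align*}
R_2 &\eqdef [\post[\Xinit][hat]{s+1:T}{T} - \post[\Xinit]{s+1:T}{T}](H), \quad H(\chunk{x}{s+1}{T}) \eqdef \BK[\Xinit]{s}(x_{s+1}, h(\cdot, \chunk{x}{s+1}{T})), \\
R_1 &\eqdef \int [\BK[\Xinit][hat]{s} - \BK[\Xinit]{s}]\bigl(x_{s+1}, h(\cdot, \chunk{x}{s+1}{T})\bigr) \, \post[\Xinit][hat]{s+1:T}{T}(\rmd \chunk{x}{s+1}{T}).
\end{align*}
Since $\BK[\Xinit]{s}$ is a probability kernel we have $\oscnorm{H} \leq \oscnorm{h}$, so the induction hypothesis applied to the bounded measurable function $H$ on $\Xset^{T-s}$ immediately bounds the tail of $R_2$.

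The term $R_1$ is the main obstacle, since the sample backward kernel $\BK[\Xinit][hat]{s}$ is itself a ratio of empirical means built from the filtering particles at time $s$, while simultaneously being integrated against a discrete measure $\post[\Xinit][hat]{s+1:T}{T}$ that involves particles at all later times. I would exploit the key identity $\post[\Xinit]{s}{s}(q(\cdot,y)[f - \BK[\Xinit]{s}(y,f)]) = 0$, valid for every bounded $f$ and every $y \in \Xset$, which recasts each increment as a zero-mean ratio
\[
[\BK[\Xinit][hat]{s} - \BK[\Xinit]{s}](y,f) = \frac{\post[\Xinit][hat]{s}{s}\bigl(q(\cdot,y)\{f - \BK[\Xinit]{s}(y,f)\}\bigr)}{\post[\Xinit][hat]{s}{s}(q(\cdot,y))}.
\]
Since $\post[\Xinit][hat]{s+1:T}{T}$ is atomic on the particle paths $\epart{s+1:T}{j}$, $R_1$ becomes a finite average of such zero-mean ratios evaluated at $y = \epart{s+1}{j_{s+1}}$. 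The ratio lemma \ref{lem:inegEssentielle} then applies, its numerator being controlled by Proposition \ref{prop:exponential-inequality-forward} applied to the filtering functional $q(\cdot,y)\{f - \BK[\Xinit]{s}(y,f)\}$ and its denominator by the same proposition applied to $q(\cdot,y)$. The technical difficulty is that $q(\cdot,y)$ need not be bounded uniformly in $y$, and that one must control the denominator uniformly in $y$ ranging over the random support of $\post[\Xinit][hat]{s+1:T}{T}$; this is handled by normalising through the filtering weight structure used in assumption A\ref{assum:borne-FFBS}, which ensures the relevant ratios are well defined and bounded in expectation. Combining the tail controls on $R_1$ and $R_2$ via a union bound closes the induction and delivers \eqref{eq:Hoeffding-1}.
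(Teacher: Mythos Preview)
Your reduction of \eqref{eq:Hoeffding-2} to \eqref{eq:Hoeffding-1} is correct and is exactly what the paper does. Your treatment of $R_2$ via the induction hypothesis is also fine, since $H$ is a fixed bounded function with $\oscnorm{H}\leq\oscnorm{h}$.

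The gap is in $R_1$. Under A\ref{assum:bound-likelihood}--A\ref{assum:borne-FFBS} alone you have no control on $q(\cdot,y)$: A\ref{assum:borne-FFBS} bounds $q(x,x')g_t(x')/[\vartheta_t(x)p_t(x,x')]$, not $q$ itself, so $q(\cdot,y)$ need not be bounded and Proposition~\ref{prop:exponential-inequality-forward} applied to $q(\cdot,y)\{f-\BK[\Xinit]{s}(y,f)\}$ may degenerate. Likewise the denominator $\post[\Xinit]{s}{s}(q(\cdot,y))$, which is the predictive density at $y$, is not bounded below in $y$, so the $\beta$ in Lemma~\ref{lem:inegEssentielle} collapses. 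Your closing sentence (``handled by normalising through the filtering weight structure'') does not supply either of these missing uniform bounds; those bounds are essentially assumption A\ref{assum:strong-mixing-condition}, which this theorem does \emph{not} assume. There is also a conditioning issue you have not addressed: the points $y=\epart{s+1}{j_{s+1}}$ at which you evaluate $[\BK[\Xinit][hat]{s}-\BK[\Xinit]{s}](y,\cdot)$ are themselves produced from the time-$s$ particles, so you cannot freeze $y$ and invoke an exponential inequality for the time-$s$ filter as if $y$ were deterministic.

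The paper avoids all of this by telescoping \emph{forward} in $t$ rather than backward in $s$. It introduces kernels $L_{s,t,T}$ and quantities $A_{s,t,T}(h)=\sum_\ell \ewght{t}{\ell}F_{s,t,T}(\epart{t}{\ell},h)$ and writes
\[
\post[\Xinit][hat]{s:T}{T}(h)=\frac{A_{s,s,T}(h)}{A_{s,s,T}(\1)}+\sum_{t=s+1}^T\left\{\frac{A_{s,t,T}(h)}{A_{s,t,T}(\1)}-\frac{A_{s,t-1,T}(h)}{A_{s,t-1,T}(\1)}\right\}.
\]
Each increment is then handled by conditioning on $\mcf{t-1}$: given this $\sigma$-field the pairs $(\epart{t}{\ell},\ewght{t}{\ell})$ are i.i.d., and a direct computation (the identity $\CPE{\ewght{t}{1}F_{s,t,T}(\epart{t}{1},h)}{\mcf{t-1}}=A_{s,t-1,T}(h)/\sum_\ell\ewght{t-1}{\ell}\adjfunc{t}{t}{\epart{t-1}{\ell}}$) makes the increment a sum of bounded, conditionally centred terms, so conditional Hoeffding and Lemma~\ref{lem:inegEssentielle} apply with constants depending only on $\esssup{\ewghtfunc{t}}$ and $\esssup{L_{t,t,T}(\cdot,\1)}$. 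No pointwise control of $q(\cdot,y)$ is ever needed, which is why the theorem goes through under A\ref{assum:bound-likelihood}--A\ref{assum:borne-FFBS} alone.
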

\begin{proof}
Using \eqref{eq:EspCond} and the definition of  $\post[\Xinit][hat]{s:T}{T}(h)$, we may write
$$
\post[\Xinit][tilde]{s:T}{T}(h) -  \post[\Xinit][hat]{s:T}{T}(h)=  N^{-1} \sum_{\ell=1}^N \left[ h \left(\epart{s:T}{\chunk{J}{s}{T}^\ell}\right) - \CPE{h\left(\epartpredIndex{s:T}{J}\right)}{\mcf{T}}  \right] \eqsp,
$$
which implies \eqref{eq:Hoeffding-2} by the Hoeffding inequality and \eqref{eq:Hoeffding-1}. We now prove \eqref{eq:Hoeffding-1}.
Let $s\leq t <T$. For $h$ a measurable function defined on $\Xset^{T-s+1}$, define the kernels $L_{s,T,T}(\epartpredIndex{s:T}{i},h)\eqdef h(\epartpredIndex{s:T}{i})$ and, for $s \leq t < T$,
\begin{multline}
\label{eq:definition-Lt}
L_{s,t,T}(\epartpredIndex{s:t}{i},h) \eqdef \idotsint Q(\epartpredIndex{t}{i},\rmd x_{t+1})g_{t+1}(x_{t+1}) \times \\
 \prod_{u=t+1}^{T-1} Q(x_u,\rmd x_{u+1}) g_{u+1}(x_{u+1})  h([\epartpredIndex{s:t}{i},x_{t+1:T}])\eqsp.
\end{multline}
By construction, $L_{s,t,T}$ can be obtained recursively \emph{backwards} in time as follows:
\begin{equation}\label{eq:recursionLt}
L_{s,t-1,T}(\epartpredIndex{s:t-1}{i},h)= \int Q(\epartpredIndex{t-1}{i},\rmd x)g_{t}(x) L_{s,t,T}([\epartpredIndex{s:t-1}{i},x],h)
\end{equation}
Denote by $\1$ the function identically equal to one. Under (A\ref{assum:bound-likelihood}), $\esssup{L_{s,t,T}(\cdot,h)} \leq \esssup{L_{s,t,T}(\cdot,\1)} \esssup{h}$, and $\esssup{L_{s,t,T}(\cdot,\1)} < \infty$.
Denote
\begin{equation}
\label{eq:definition-Fs-As}
F_{s,s,T}(\epart{}{},h) \eqdef  L_{s,s,T}(\epart{}{},h) \eqsp,
\end{equation}
and for all $t \in \{ s+1, \dots, T\}$, set
\begin{equation}
\label{eq:definition-Ft}
F_{s,t,T}(\epart{}{},h) \eqdef  \sum_{i_{s:t-1}=1}^N \swghtcIndex{s:t-1}{i}
\frac{\ewght{t-1}{i_{t-1}} q(\epart{t-1}{i_{t-1}},\epart{}{})}{\sum_{\ell=1}^N \ewght{t-1}{\ell} q(\epart{t-1}{\ell},\epart{}{})} L_{s,t,T}([\epartpredIndex{s:t-1}{i},\epart{}{}],h) \eqsp,
\end{equation}
where $\swghtcIndex{s:t-1}{i}$ is defined in \eqref{eq:definition-swghtcIndex}. Define, for
$t \in \{s, \dots, T\}$,
\begin{equation}
\label{eq:definition-Ast}
A_{s,t,T}(h) \eqdef \sum_{\ell=1}^N  \ewght{t}{\ell} F_{s,t,T}(\epart{t}{\ell},h)  \eqsp.
\end{equation}
Without loss of generality, we assume that $\post[\Xinit]{s:T}{T}(h)=0$.
With the notations introduced above, $\post[\Xinit][hat]{s:T}{T}(h)$ may be expressed as the sum
\begin{align} \label{eq:decomp_Smooth}
\post[\Xinit][hat]{s:T}{T}(h)= \frac{A_{s,T,T}(h)}{A_{s,T,T}(\1)} = \frac{A_{s,s,T}(h)}{A_{s,s,T}(\1)} + \sum_{t=s+1}^{T} \left\{ \frac{A_{s,t,T}(h)}{A_{s,t,T}(\1)} - \frac{A_{s,t-1,T}(h)}{A_{s,t-1,T}(\1)} \right\}  \eqsp.
\end{align}
We now compute an exponential bound for the terms appearing the RHS of the previous identity. Note that
 $$
 \frac{A_{s,s,T}(h)}{A_{s,s,T}(\1)} = \frac{\sumwght{s}^{-1} \sum_{\ell=1}^N \ewght{s}{\ell} L_{s,s,T}(\epartpred{s}{\ell},h)}{\sumwght{s}^{-1} \sum_{\ell=1}^N \ewght{s}{\ell} L_{s,s,T}(\epartpred{s}{\ell},\1)}
 $$
 We apply Lemma \ref{lem:inegEssentielle} by successively checking conditions \eqref{item:inegEssentielle-borne}, \eqref{item:inegEssentielle-expo1} and \eqref{item:inegEssentielle-expo2} with
$a_N= \sumwght{s}^{-1} \sum_{\ell=1}^N \ewght{s}{\ell} L_{s,s,T}(\epartpred{s}{\ell},h)$, $b_N= \sumwght{s}^{-1} \sum_{\ell=1}^N \ewght{s}{\ell} L_{s,s,T}(\epartpred{s}{\ell},\1)$, $c_N=0$, and  $b=\beta=\post[\Xinit]{s}{s}(L_{s,s,T}(\cdot,\1))$.
It follows immediately from the definition that
$\left|\frac{a_N}{b_N} \right|\leq  \esssup{h}$.
Moreover,  $\post[\Xinit]{s:T}{T}(h)=0$ implies that $\post[\Xinit]{s}{s}(L_{s,s,T}(\cdot,h))=0$, conditions \eqref{item:inegEssentielle-expo1} and \eqref{item:inegEssentielle-expo2} are then directly derived from the exponential inequality for the auxiliary filter (see Proposition \ref{prop:exponential-inequality-forward}, Eq.~\eqref{eq:Hoeffding-Filtering-normalized}).

We now establish an exponential inequality for   $$A_{s,t,T}(h)/A_{s,t,T}(\1)-A_{s,t-1,T}(h)/A_{s,t-1,T}(\1)$$  using again Lemma \ref{lem:inegEssentielle}. We take $a_N= N^{-1}A_{s,t,T}(h)$, $b_N= N^{-1} A_{s,t,T}(\1)$, $c_N= N^{-1}A_{s,t-1,T}(h)$, $d_N= N^{-1} A_{s,t-1,T}(\1)$ and
\begin{equation}
\label{eq:definition-b-1}
b=\beta=\frac{\post[\Xinit]{t-1}{t-1}(L_{s,t-1,T}(\cdot,\1))}{\post[\Xinit]{t-1}{t-1}(\adjfunc{t}{t}{})} \eqsp.
\end{equation}
By definition, $\left|{a_N}/{b_N} \right|\leq  \esssup{h}$ and $\left|{c_N}/{d_N} \right|\leq  \esssup{h}$, showing Lemma \ref{lem:inegEssentielle}, condition \eqref{item:inegEssentielle-borne}. We now check condition \eqref{item:inegEssentielle-expo1}.
The function $\epartpredIndex{s:t}{i} \mapsto
L_{s,t,T}(\epartpredIndex{s:t}{i},\1)$ depends only on $\epartpredIndex{t}{i}$; with a slight abuse of notation, we set $L_{s,t,T}(\epartpredIndex{s:t}{i},\1)= L_{t,t,T}(\epartpredIndex{t}{i},\1)$.
It follows from the definition \eqref{eq:definition-Ft} that $F_{s,t,T}(\epart{}{},\1)= L_{t,t,T}(\epart{}{},\1)$.
Plugging this into the definition of $A_{s,t,T}(\1)$ yields:
$A_{s,t,T}(\1)=\sum_{\ell=1}^N \swght{t}{i} L_{t,t,T}(\epartpred{t}{i},\1)$. Condition \eqref{item:inegEssentielle-expo1} follows from Proposition \ref{prop:exponential-inequality-forward}-Eq.~\eqref{eq:Hoeffding-Filtering-unnormalized}. Finally, we check condition \eqref{item:inegEssentielle-expo2}. Write
$
a_N-\frac{c_N}{d_N}b_N =  N^{-1}\sum_{\ell=1}^N  \ewght{t}{\ell} G_{s,t,T}(\epart{t}{\ell},h)
$ where
\begin{equation}
\label{eq:definition-G}
  G_{s,t,T}(\epart{}{},h) \eqdef F_{s,t,T}(\epart{}{},h)- \frac{A_{s,t-1,T}(h)}{A_{s,t-1,T}(\1)}F_{s,t,T}(\epart{}{},\1) \eqsp.
\end{equation}
Since $\{(\epart{t}{\ell},\ewght{t}{\ell})\}_{\ell=1}^N$ are \iid\ conditionally to the $\sigma$-field  $\mcf{t-1}$, we have that $\{\ewght{t}{\ell} G_{s,t,T}(\epart{t}{\ell},h)\}_{\ell=1}^{N}$ are also \iid\ conditionally to $\mcf{t-1}$. That allows to apply the conditional Hoeffding's inequality to $ N^{-1}\sum_{\ell=1}^N  \ewght{t}{\ell} G_{s,t,T}(\epart{t}{\ell},h)$ provided that we have first checked that $\CPE{\eta^1}{\mcf{t-1}}=0$ and that $(\ewght{t}{\ell} G_{s,t,T}(\epart{t}{\ell},h))_{1\leq \ell\leq N}$ are bounded random variables.
By \eqref{eq:relat1}, for any bounded function $f$,
\begin{equation*}
\CPE{\ewght{t}{1} f(\epart{t}{1})}{\mcf{t-1}} = \int \frac{\sum_{i=1}^N \ewght{t-1}{i} q(\epart{t-1}{i},x) }{\sum_{\ell=1}^N \ewght{t-1}{\ell} \adjfunc{t}{t}{\epart{t-1}{\ell}}}  g_t(x) f(x) \rmd x
\end{equation*}
Applying this relation with $f(\cdot)= F_{s,t,T}(\cdot,h)$ and using the recursion \eqref{eq:recursionLt}, the previous relation implies by direct calculation
\begin{equation}
\CPE{\ewght{t}{1} F_{s,t,T}(\epart{t}{1},h)}{\mcf{t-1}}=\frac{ \sum_{\ell =1}^N \ewght{t-1}{\ell} F_{s,t-1,T}(\epart{t-1}{\ell},h)}
{\sum_{\ell=1}^N \ewght{t-1}{\ell}\adjfunc{t}{t}{\epart{t-1}{\ell}}}\eqsp. \label{eq:espCondA}
\end{equation}
Therefore, since $A_{s,t-1,T}(h)/A_{s,t-1,T}(\1)$ is $\mcf{t-1}$-measurable, 
\begin{multline}
\label{eq:ineq0}
\CPE{\ewght{t}{1} G_{s,t,T}(\epart{t}{1},h)}{\mcf{t-1}}\\
= \frac{\sum_{\ell=1}^N \ewght{t-1}{\ell} F_{s,t-1,T}(\epart{t-1}{\ell},h)}{\sum_{\ell=1}^N \ewght{t-1}{\ell} \adjfunc{t}{t}{\epart{t-1}{\ell}}} - \frac{A_{s,t-1,T}(h)}{A_{s,t-1,T}(\1)} \frac{\sum_{\ell=1}^N \ewght{t-1}{\ell} F_{s,t-1,T}(\epart{t-1}{1},\1)}{\sum_{\ell=1}^N \ewght{t-1}{\ell} \adjfunc{t}{t}{\epart{t-1}{\ell}}} =0 \eqsp,
\end{multline}
by definition \eqref{eq:definition-Ast}. Moreover, since $| F_{s,t,T}(x,h)| \leq F_{s,t,T}(x,\1) \esssup{h}$, we have $|A_{s,t,T}(h)/A_{s,t,T}(\1)| \leq \esssup{h}$, showing that
\begin{multline}
|\ewghtfunc{t}(x,x') G_{s,t,T}(x',h)|=\left| \ewghtfunc{t}(x,x') \left(F_{s,t,T}(x',h) - \frac{A_{s,t-1,T}(h)}{A_{s,t-1,T}(\1)} F_{s,t,T}(x',\1) \right)\right| \\
\leq 2 \esssup{\ewghtfunc{t}} \esssup{F_{s,t,T}(\cdot,\1)}\esssup{h} \leq 2 \esssup{\ewghtfunc{t}} \esssup{L_{t,t,T}(\cdot,\1)} \esssup{h} < \infty \eqsp.
\label{eq:borne-ewghtfuncxG}
\end{multline}
The Hoeffding inequality therefore implies:
\begin{multline*}
\PP\left(\left|a_N-\frac{c_N}{d_N}b_N\right| >\epsilon\right)= \PP\left(\left| N^{-1}\sum_{\ell=1}^N  \ewght{t}{\ell} G_{s,t,T}(\epart{t}{\ell},h)\right|>\epsilon\right)\\ \leq B \exp\left\{-CN \left(\frac{\epsilon}{\esssup{h}}\right)^2\right\} \eqsp,
\end{multline*}
showing condition \eqref{item:inegEssentielle-expo2} and concluding the proof.
\end{proof}

\section{Asymptotic normality}
\label{sec:CLTFFBS}
We now derive a Central Limit Theorem (CLT) for the forward-filtering backward-smoothing estimator \eqref{eq:forward-filtering-backward-smoothing}. Consider the following
assumption.
\begin{assum}\label{assum:bound-proposal-kernel}
for all $t \in \{1,\dots,T\}$ and $M > 0$, $\int \sup_{|x| \leq M} \kiss{t}{t}(x,x') \rmd x'  <\infty$.
\end{assum}

We first recall that, under assumption (A\ref{assum:bound-likelihood}-\ref{assum:borne-FFBS}) the auxiliary particle filter approximation of the filtering distribution satisfies a CLT (see for example \cite[Theorem 3.2]{douc:moulines:olsson:2008}).
For any bounded measurable function $h: \Xset \to \rset$, define the kernel
\begin{equation}
\label{eq:definition-Lst}
L_{s,t}(x,h) \eqdef L_{s,s,t}(x,\bar{h})
\end{equation}
where $\bar{h}(x_{s:t}) = h(x_s)$.
The quantity $L_{s,t}(\epart{}{},h)/L_{s,t}(\epart{}{},\1)$ may be interpreted as the conditional expectation of $h(X_t)$ given the observations up to time $t$ and $X_s$ evaluated at $X_s=\epart{}{}$. Moreover, for any distribution $\nu$,
\begin{equation}\label{eq:interpretation-L}
\post[\nu]{t}{s:t}(h)=\frac{\int \nu(\rmd x_s) L_{s,t}(x_s,h)}{\int \nu(\rmd x_s) L_{s,t}(x_s,\1) } \eqsp.
\end{equation}

\begin{prop}
\label{prop:CLT-auxiliary-particle-filter}
Assume A\ref{assum:bound-likelihood}--\ref{assum:borne-FFBS}.  Then, for all bounded measurable functions
$h: \Xset \to \rset$, and $0 \leq s \leq T$,
\begin{align}
\label{eq:clt-auxiliary-particle-filter}
\sqrt{N}\left(\post[\Xinit][hat]{s}{s}(h) -  \post[\Xinit]{s}{s}(h) \right) \dlim \mathcal{N}\left(0,\asymVar[\Xinit]{s}{s}{h-  \post[\Xinit]{s}{s}(h)}\right) \eqsp,
\end{align}
where
\begin{equation}
\label{eq:recursionasymVar-filtering}
\asymVar[\Xinit]{s}{s}{h}= \sum_{r=0}^s V_{\Xinit,r,s}[h] \eqsp,
\end{equation}
with
\begin{equation}\label{eq:defVOs}
V_{\Xinit,0,s}[h]= \frac{\rho_0\left( \ewghtfunc{0}^2(\cdot) L^2_{0,s}(\cdot,h) \right)}{\left(\post[\Xinit]{0}{-1}\left[g_0(\cdot) L_{0,s}(\cdot,\1) \right] \right)^2}
\eqsp,
\end{equation}
and for $1 \leq r \leq s$,
\begin{equation}\label{eq:defVrs}
V_{\Xinit,r,s}[h]= \frac{\post[\Xinit]{r-1}{r-1}\left[  \adjfunc{r}{r}{\cdot} \int \kiss{r}{r}(\cdot,x) \ewghtfunc{r}^2(\cdot,x) L^2_{r,s}(x,h) \rmd x\right] \post[\Xinit]{r-1}{r-1}(\adjfunc{r}{r}{})}{\left(\post[\Xinit]{r}{r-1}\left[g_r(\cdot) L_{r,s}(\cdot,\1) \right] \right)^2} \eqsp.
\end{equation}
\end{prop}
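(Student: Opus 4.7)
\medskip

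\noindent\textbf{Proof plan.} The strategy is induction on $s \in \{0, 1, \dots, T\}$, using the usual martingale-array decomposition at each step. Writing
\[
\post[\Xinit][hat]{s}{s}(h) - \post[\Xinit]{s}{s}(h) = \frac{N^{-1} \sum_{i=1}^{N} \ewght{s}{i} \{ h(\epart{s}{i}) - \post[\Xinit]{s}{s}(h) \}}{N^{-1} \sum_{i=1}^{N} \ewght{s}{i}},
\]
the denominator converges in probability to $\post[\Xinit]{s}{s-1}(g_s)/\post[\Xinit]{s-1}{s-1}(\adjfunc{s}{s}{})$ (with the convention at $s=0$), so by Slutsky it suffices to prove a CLT for the numerator. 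Using that the sample $\{(\epart{s}{i}, \ewght{s}{i})\}_{i=1}^N$ is conditionally i.i.d.\ given $\mcf{s-1}$, I split this numerator as
\[
N^{-1}\sum_{i=1}^N \ewght{s}{i}\bigl\{h(\epart{s}{i}) - \post[\Xinit]{s}{s}(h)\bigr\} = M_N^{(s)} + D_N^{(s)},
\]
where $M_N^{(s)}$ centers each summand by its conditional expectation (a centered conditionally-i.i.d.\ average) and $D_N^{(s)} = \CPE{\ewght{s}{1}[h(\epart{s}{1}) - \post[\Xinit]{s}{s}(h)]}{\mcf{s-1}}$ is the conditional bias.

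\medskip

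\noindent\textbf{Base case.} For $s=0$ the particles $\{\epart{0}{i}\}$ are i.i.d.\ $\XinitIS{0}$ with weights $\ewghtfunc{0}(\epart{0}{i})$, so the ordinary CLT applied to the numerator combined with Slutsky yields \eqref{eq:clt-auxiliary-particle-filter} with variance $V_{\Xinit, 0, 0}[h]$ after a routine algebraic identification using $L_{0,0}(\cdot, h)=h$.

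\medskip

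\noindent\textbf{Inductive step.} Assume the CLT holds at index $s-1$ for every bounded test function. Using \eqref{eq:relat1} one has
\[
D_N^{(s)} = \frac{\post[\Xinit][hat]{s-1}{s-1}(L_{s-1,s}(\cdot, h - \post[\Xinit]{s}{s}(h)))}{\post[\Xinit][hat]{s-1}{s-1}(\adjfunc{s}{s}{})},
\]
and the filtering recursion gives $\post[\Xinit]{s-1}{s-1}(L_{s-1,s}(\cdot, h - \post[\Xinit]{s}{s}(h))) = 0$. Therefore, applying the induction hypothesis to the test function $\varphi \eqdef L_{s-1,s}(\cdot, h - \post[\Xinit]{s}{s}(h))$ (bounded under A\ref{assum:bound-likelihood}), combined with Slutsky on the denominator, shows that $\sqrt N\, D_N^{(s)}$ is asymptotically normal with variance $\asymVar[\Xinit]{s-1}{s-1}{\varphi}/\post[\Xinit]{s-1}{s-1}(\adjfunc{s}{s}{})^2$. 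For $M_N^{(s)}$, I apply the Lindeberg--Feller CLT for triangular arrays that are conditionally i.i.d.: the conditional Lindeberg condition follows from A\ref{assum:bound-likelihood}--\ref{assum:borne-FFBS} (which bound $\ewghtfunc{s}$ and $h$), and the conditional variance is
\[
\CPE{(\ewght{s}{1})^2 [h(\epart{s}{1}) - \post[\Xinit]{s}{s}(h)]^2}{\mcf{s-1}} - (D_N^{(s)})^2,
\]
which by the same computation leading to \eqref{eq:relat1} converges in probability, using Proposition~\ref{prop:exponential-inequality-forward} applied to the relevant filtered averages, to
\[
\frac{\post[\Xinit]{s-1}{s-1}\!\bigl[\adjfunc{s}{s}{\cdot} \int \kiss{s}{s}(\cdot,x)\ewghtfunc{s}^2(\cdot,x)[h(x) - \post[\Xinit]{s}{s}(h)]^2\,\rmd x\bigr]}{\post[\Xinit]{s-1}{s-1}(\adjfunc{s}{s}{})}.
\]
Because $M_N^{(s)}$ is conditionally centered given $\mcf{s-1}$ while $D_N^{(s)}$ is $\mcf{s-1}$-measurable, the joint limit follows from a standard stable-convergence / conditional-CLT argument, yielding asymptotic independence of the two pieces.

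\medskip

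\noindent\textbf{Identification of the variance.} Dividing by the squared limit of the denominator $(\post[\Xinit]{s-1}{s-1}(\adjfunc{s}{s}{})/\post[\Xinit]{s}{s-1}(g_s))^2$ and summing the two contributions gives
\[
\asymVar[\Xinit]{s}{s}{h - \post[\Xinit]{s}{s}(h)} = \underbrace{\frac{\asymVar[\Xinit]{s-1}{s-1}{\varphi}}{\post[\Xinit]{s}{s-1}(g_s)^2}}_{\text{from }D_N^{(s)}} + \underbrace{V_{\Xinit, s, s}[h - \post[\Xinit]{s}{s}(h)]}_{\text{from } M_N^{(s)}},
\]
where the second term matches \eqref{eq:defVrs} at $r=s$ directly since $L_{s,s}(x, \cdot) = \mathrm{id}$ and $L_{s,s}(\cdot,\1) = \1$. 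The main bookkeeping step is showing that the first term equals $\sum_{r=0}^{s-1} V_{\Xinit, r, s}[h - \post[\Xinit]{s}{s}(h)]$: this follows from the semigroup identity $L_{r, s-1}(x, \varphi) = L_{r, s}(x, h - \post[\Xinit]{s}{s}(h))$ (immediate from \eqref{eq:recursionLt} and \eqref{eq:definition-Lst}) together with the normalization identity $\post[\Xinit]{r}{r-1}(g_r L_{r,s}(\cdot, \1)) = \post[\Xinit]{r}{r-1}(g_r L_{r,s-1}(\cdot,\1)) \cdot \post[\Xinit]{s}{s-1}(g_s) / \post[\Xinit]{s-1}{s-1}(\adjfunc{s}{s}{})^{\,0}$ obtained from the filtering recursion applied at each level.

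\medskip

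\noindent\textbf{Expected difficulty.} The genuinely nontrivial steps are (i) the conditional/stable CLT argument needed to combine $M_N^{(s)}$ and $D_N^{(s)}$ into a single Gaussian limit, which requires verifying the Lindeberg condition conditionally on $\mcf{s-1}$ and using that the conditional variance converges to a deterministic limit (Proposition~\ref{prop:exponential-inequality-forward} is the appropriate input), and (ii) the algebraic verification that the accumulated variance at level $s$ telescopes into the closed form of \eqref{eq:recursionasymVar-filtering}--\eqref{eq:defVrs}; once the $L$-semigroup identities are isolated, this reduces to bookkeeping of normalization constants along the filtering recursion.
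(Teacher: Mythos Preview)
The paper does not actually prove this proposition: it is stated as a recall of \cite[Theorem~3.2]{douc:moulines:olsson:2008} and no argument is given. Your proposal therefore cannot be compared against a proof in the paper, but it is the standard route to this result and is essentially the argument in the cited reference: induction on $s$, split the numerator into the conditionally centered average $M_N^{(s)}$ and the $\mcf{s-1}$-measurable bias $D_N^{(s)}$, apply the induction hypothesis to $D_N^{(s)}$, apply a conditional Lindeberg CLT to $M_N^{(s)}$, and glue the two via a stable-convergence argument (this is precisely what the paper invokes later as \cite[Corollary~11]{douc:moulines:2008}).

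One point to clean up: your normalization identity is written as
\[
\post[\Xinit]{r}{r-1}\bigl(g_r L_{r,s}(\cdot,\1)\bigr) = \post[\Xinit]{r}{r-1}\bigl(g_r L_{r,s-1}(\cdot,\1)\bigr)\cdot \post[\Xinit]{s}{s-1}(g_s)\big/ \post[\Xinit]{s-1}{s-1}(\adjfunc{s}{s}{})^{0},
\]
which with the exponent $0$ is formally correct but clearly a placeholder. The clean statement (and what you need) is simply
\[
\post[\Xinit]{r}{r-1}\bigl(g_r L_{r,s}(\cdot,\1)\bigr) = \post[\Xinit]{r}{r-1}\bigl(g_r L_{r,s-1}(\cdot,\1)\bigr)\cdot \post[\Xinit]{s}{s-1}(g_s),
\]
obtained by writing $L_{r,s}(\cdot,\1)=L_{r,s-1}\bigl(\cdot,L_{s-1,s}(\cdot,\1)\bigr)$ and using \eqref{eq:interpretation-L} with $\nu=\post[\Xinit]{r}{r}$ to get $\post[\Xinit]{r}{r}[L_{r,s-1}(\cdot,f)]=\post[\Xinit]{r}{r}[L_{r,s-1}(\cdot,\1)]\,\post[\Xinit]{s-1}{s-1}(f)$, then taking $f=L_{s-1,s}(\cdot,\1)$. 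With this, the telescoping of $\asymVar[\Xinit]{s-1}{s-1}{\varphi}/\post[\Xinit]{s}{s-1}(g_s)^2$ into $\sum_{r=0}^{s-1}V_{\Xinit,r,s}[\,\cdot\,]$ goes through exactly as you claim.
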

Using the CLT for the auxiliary particle approximation of the filtering distribution, we establish a CLT for the auxiliary particle
approximation of the fixed interval joint smoothing distribution. The proof is established using this time a recursion going forward in time, \ie\ a CLT for  $\post[\Xinit][hat]{s:t}{t}(\cdot)$ is deduced from a CLT for $\post[\Xinit][hat]{s:t-1}{t-1}(\cdot)$. The proof is based on the techniques developed in \cite{douc:moulines:2008} (extending \cite{chopin:2004} and \cite{kuensch:2005}) which are tailored to the analysis of sequential Monte-Carlo algorithms.
Define, for $0 \leq s < t \leq T$,  $q_{s,t-1}(x_{s:t-1},x)= q(x_{t-1},x)$
\begin{equation}
\label{eq:definition-upsilon}
\upsilon_{s,t,T}(x,h) \eqdef \adjfunc{t}{t}{x} \int \kiss{t}{t}(x,x') \ewghtfunc{t}^2(x,x') g^2_{s,t,T}(x',h) \rmd x'\eqsp,
\end{equation}
where
\begin{equation}
\label{eq:definition-g}
g_{s,t,T}(x,h) \eqdef \frac{\post[\Xinit]{s:t-1}{t-1}(q_{s:t-1}(\cdot,x) L_{s,t,T}([\cdot,x],h))}{\post[\Xinit]{t-1}{t-1}(q(\cdot,x))} \eqsp.
\end{equation}
\begin{thm}
Assume A\ref{assum:bound-likelihood}--\ref{assum:borne-FFBS}. Let $s \leq T$.
Then, for all bounded measurable functions $h: \Xset^{T-s+1} \to \rset$,
\begin{equation}
\label{eq:clt}
\sqrt{N}\left(\post[\Xinit][hat]{s:T}{T}(h) -  \post[\Xinit]{s:T}{T}(h) \right)\\\dlim \mathcal{N}\left(0,\asymVar[\Xinit]{s:T}{T}{h-  \post[\Xinit]{s:T}{T}(h)}\right) \eqsp,
\end{equation}
with
\begin{multline}
\label{eq:expression-covariance}
\asymVar[\Xinit]{s:T}{T}{h}  \eqdef \frac{\asymVar[\Xinit]{s}{s}{L_{s,s,T}(\cdot,h)}}{\post[\Xinit]{s}{s}^2[L_{s,s,T}(\cdot,\1)]} \\+
\sum_{t=s+1}^T \frac{\post[\Xinit]{t-1}{t-1}(\upsilon_{s,t,T}(\cdot,h)) \post[\Xinit]{t-1}{t-1}(\adjfunc{t}{t}{})}{\post[\Xinit]{t}{t-1}^2[g_t(\cdot) L_{t,t,T}(\cdot,\1)]}
\end{multline}
where $\asymVar[\Xinit]{s}{s}{}$ is defined in \eqref{eq:clt-auxiliary-particle-filter}.
Moreover,
\begin{multline}
\label{eq:cltSi}
\sqrt{N}\left(\post[\Xinit][tilde]{s:T}{T}(h) -  \post[\Xinit]{s:T}{T}(h) \right) \\
\dlim \mathcal{N}\left(0,\post[\Xinit]{s:T}{T}^2\left[h-  \post[\Xinit]{s:T}{T}(h)\right]+\asymVar[\Xinit]{s:T}{T}{h-  \post[\Xinit]{s:T}{T}(h)}\right) \eqsp,
\end{multline}

\end{thm}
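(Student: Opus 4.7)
The plan is a forward-in-time martingale-difference argument that refines the proof of Theorem \ref{thm:Hoeffding-FFBS} from exponential deviations to a CLT. Assume without loss of generality that $\post[\Xinit]{s:T}{T}(h) = 0$ and reuse the telescoping decomposition (\ref{eq:decomp_Smooth}). The leading term $A_{s,s,T}(h)/A_{s,s,T}(\1) = \post[\Xinit][hat]{s}{s}(L_{s,s,T}(\cdot,h)) / \post[\Xinit][hat]{s}{s}(L_{s,s,T}(\cdot,\1))$ depends only on the time-$s$ filtering particles; since $\post[\Xinit]{s}{s}(L_{s,s,T}(\cdot,h)) = 0$ under the centering assumption, applying Proposition \ref{prop:CLT-auxiliary-particle-filter} with $h$ replaced by $L_{s,s,T}(\cdot,h)$ together with Slutsky's lemma for the denominator produces a CLT whose variance is $\asymVar[\Xinit]{s}{s}{L_{s,s,T}(\cdot,h)} / \post[\Xinit]{s}{s}^2[L_{s,s,T}(\cdot,\1)]$, the first summand in (\ref{eq:expression-covariance}).

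For each $t \in \{s+1, \dots, T\}$, rewrite the corresponding increment in (\ref{eq:decomp_Smooth}) using (\ref{eq:definition-G}) as
\begin{equation*}
\frac{A_{s,t,T}(h)}{A_{s,t,T}(\1)} - \frac{A_{s,t-1,T}(h)}{A_{s,t-1,T}(\1)} = \frac{N^{-1}\sum_{\ell=1}^N \ewght{t}{\ell}\, G_{s,t,T}(\epart{t}{\ell},h)}{N^{-1}A_{s,t,T}(\1)}.
\end{equation*}
Conditionally on $\mcf{t-1}$, the summands in the numerator are i.i.d., mean-zero by (\ref{eq:ineq0}), and uniformly bounded by (\ref{eq:borne-ewghtfuncxG}); the denominator converges in probability to $\post[\Xinit]{t-1}{t-1}(L_{s,t-1,T}(\cdot,\1)) / \post[\Xinit]{t-1}{t-1}(\adjfunc{t}{t}{})$ by Proposition \ref{prop:exponential-inequality-forward}. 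A conditional Lindeberg-Feller CLT applied on $\mcf{t-1}$ then produces a Gaussian limit for the numerator, and expanding the conditional second moment through the instrumental distribution (\ref{eq:instrumental-distribution-filtering}) -- exactly as in the derivation (\ref{eq:relat1}) but with squared weights and integrand -- identifies the conditional variance with $\post[\Xinit]{t-1}{t-1}(\upsilon_{s,t,T}(\cdot,h)) / \post[\Xinit]{t-1}{t-1}(\adjfunc{t}{t}{})$ via the definition (\ref{eq:definition-upsilon}). Normalizing by the squared limit of the denominator recovers the $t$-th summand in (\ref{eq:expression-covariance}).

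Because each such increment is a martingale difference relative to the filtration $(\mcf{t})_{s \leq t \leq T}$, a multivariate martingale CLT (equivalently, a Cram\'er-Wold argument combined with repeated conditioning) yields joint asymptotic normality of $\sqrt{N}$ times the individual pieces with a diagonal covariance matrix, and summing the diagonal entries delivers (\ref{eq:expression-covariance}), completing the FFBSm CLT. For the FFBSi statement, split $\post[\Xinit][tilde]{s:T}{T}(h) - \post[\Xinit]{s:T}{T}(h) = [\post[\Xinit][tilde]{s:T}{T}(h) - \post[\Xinit][hat]{s:T}{T}(h)] + [\post[\Xinit][hat]{s:T}{T}(h) - \post[\Xinit]{s:T}{T}(h)]$; conditionally on $\mcf{T}$, the first bracket is $N^{-1}$ times a sum of $N$ i.i.d. bounded mean-zero variables with conditional variance $\post[\Xinit][hat]{s:T}{T}((h - \post[\Xinit][hat]{s:T}{T}(h))^2)$, which by (\ref{eq:Hoeffding-1}) converges in probability to the corresponding limit under $\post[\Xinit]{s:T}{T}$; since the backward draws use fresh randomness independent of the forward filter, the two brackets are asymptotically independent, and their limiting variances add to produce (\ref{eq:cltSi}). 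The main technical obstacle is the bookkeeping that identifies the conditional second moment of $\ewght{t}{1} G_{s,t,T}(\epart{t}{1},h)$ with the prescribed expression involving $\upsilon_{s,t,T}$ and $\adjfunc{t}{t}{}$, and the verification of uniform Lindeberg negligibility across $t$; both rest on the boundedness statements in (A\ref{assum:bound-likelihood})--(A\ref{assum:borne-FFBS}) and on the bound (\ref{eq:borne-ewghtfuncxG}).
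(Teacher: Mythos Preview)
Your overall strategy matches the paper's: the same telescoping decomposition (\ref{eq:decomp_Smooth}), the CLT for the leading term via Proposition~\ref{prop:CLT-auxiliary-particle-filter}, the conditional martingale-difference structure for the increments, and a final conditional-CLT layer for the FFBSi estimator. The paper organizes the argument slightly differently---packaging the pieces as an inner product $\langle V^N_{s,T}, W^N_{s,T}\rangle$ and establishing a joint CLT for $V^N_{s,T}$ by forward induction via \cite[Corollary~11]{douc:moulines:2008}---but the substance is the same.

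There is, however, a genuine gap in what you dismiss as ``bookkeeping.'' Expanding the conditional second moment of $\ewght{t}{1} G_{s,t,T}(\epart{t}{1},h)$ through the instrumental distribution does \emph{not} produce $\upsilon_{s,t,T}$: it yields an expression of the same form but with $G^2_{s,t,T}$ in place of $g^2_{s,t,T}$. The function $G_{s,t,T}(\cdot,h)$ is \emph{random}---it is built from $F_{s,t,T}$ and from the ratio $A_{s,t-1,T}(h)/A_{s,t-1,T}(\1)$, hence depends on all particles up to time $t-1$---whereas $g_{s,t,T}$ in (\ref{eq:definition-g}) is deterministic. The passage from one to the other requires (i) pointwise convergence $G_{s,t,T}(x,h)\plim g_{s,t,T}(x,h)$ for each fixed $x$, which in turn uses consistency of $\post[\Xinit][hat]{s:t-1}{t-1}$ (Theorem~\ref{thm:Hoeffding-FFBS} at time $t-1$) to handle $F_{s,t,T}$ and of $\post[\Xinit][hat]{t-1}{t-1}$ for the normalizing denominator; and (ii) an argument upgrading this pointwise convergence to convergence of the weighted average $\sumwght{t-1}^{-1}\sum_\ell \ewght{t-1}{\ell}\,\Upsilon_{s,t,T}(\epart{t-1}{\ell},h)$, which the paper carries out via a uniform-on-compacts bound (Lemma~\ref{lem:toutbete-1}, itself relying on (A\ref{assum:bound-proposal-kernel})). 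This step is the only genuinely new analytic content beyond Theorem~\ref{thm:Hoeffding-FFBS} and Proposition~\ref{prop:CLT-auxiliary-particle-filter}, and your sketch does not account for it.
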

\begin{proof}
Without loss of generality, we assume that $\post[\Xinit]{s:T}{T}(h)=0$.
Denote by $\pscal{\cdot}{\cdot}$ the scalar product, $V^N_{s,T}= [V^N_{s,s,T}, \dots, V^N_{s,T,T}]$ and $W^N_{s,T}= [W^N_{s,s,T}, \dots, W^N_{s,T,T}]$ the vectors given by
\begin{align}
\label{eq:definition-VssT}
&V_{s,s,T}^N(h)= N^{1/2} \frac{A_{s,s,T}(h)}{\sumwght{s}} \eqsp, \\
\label{eq:definition-VstT}
&V_{s,t,T}^N(h) \eqdef  N^{-1/2} \left( A_{s,t,T}(h) -\frac{A_{s,t-1,T}(h)}{A_{s,t-1,T}(\1)} A_{s,t,T}(\1) \right) \eqsp, t=s+1, \dots, T \eqsp,
\\
&W_{s,s,T}^N = \frac{\sumwght{s}}{A_{s,s,T}(\1)} \eqsp, \quad W_{s,t,T}^N = \frac{N}{A_{s,t,T}(\1)} \eqsp, t=s+1,\dots,T\eqsp.
\label{eq:definition-W}
\end{align}
where $A_{s,t,T}$ is defined in \eqref{eq:definition-Ast}.  Using these notations, we decompose $\sqrt{N} \post[\Xinit][hat]{s:T}{T}(h)$ as follows
\begin{align} \label{eq:decomp_Smooth_1}
\sqrt{N} \post[\Xinit][hat]{s:T}{T}(h)= \pscal{V^N_{s,T}(h)}{W^N_{s,T}} \eqsp,
\end{align}
Since $W_{s,s,T}^N= \left( \post[\Xinit][hat]{s}{s}(L_{s,s,T}(\cdot,\1))\right)^{-1}$ and similarly for $t=s+1,\dots,T$, $W_{s,t,T}^N= \left( N^{-1} \sum_{\ell=1}^N \ewght{t}{\ell} L_{t,t,T}(\epart{t}{\ell},\1)\right)^{-1}$,
Proposition \ref{prop:exponential-inequality-forward}, \eqref{eq:Hoeffding-Filtering-unnormalized} and \eqref{eq:Hoeffding-Filtering-normalized} show that
\begin{align}
&W_{s,s,T}^N \plim_{n \to \infty} \frac{1}{\post[\Xinit]{s}{s}(L_{s,s,T}(\cdot,\1))} \\
\label{eq:limit-WstT}
&W_{s,t,T}^N \plim_{N \to \infty} \frac{\post[\Xinit]{t-1}{t-1}(\adjfunc{t}{t}{})}{\post[\Xinit]{t}{t-1}(g_t(\cdot) L_{t,t,T}(\cdot,\1))} \eqsp.
\end{align}
Therefore \eqref{eq:clt} follows from the application of the Slutsky Lemma provided that we establish a multivariate CLT for the sequence of random vectors $V^N_{s,T}(h)$. For that purpose, we show that for any $t \in \{s, \dots, T\}$ and any scalars $\alpha_s,\dots,\alpha_t$,
\begin{equation}
\label{eq:multivariate-CLT-FFBS}
\sum_{r=s}^t \alpha_r V_{s,r,T}^N(h) \dlim_{N \to \infty} \mathcal{N}\left(0, \sum_{r=1}^t \alpha_r^2 \incrasymVar[\Xinit]{s:r}{T}{h} \right) \eqsp,
\end{equation}
where $\incrasymVar[\Xinit]{s}{T}{h}= \asymVar[\Xinit]{s}{s}{L_{s,s,T}(\cdot,h)}$, and for $r \in \{ s+1,\dots, T\}$,
\begin{equation}
\label{eq:definition-Sigma}
\incrasymVar[\Xinit]{s:r}{T}{h} \eqdef \frac{\post[\Xinit]{s:r-1}{r-1}[\upsilon_{s,r,T}(\cdot,h)]}{\post[\Xinit]{r-1}{r-1}(\adjfunc{r}{r}{})} \eqsp.
\end{equation}
 First consider $t=s$. Since $A_{s,s,T}(h)= \sum_{\ell=1}^N \ewght{s}{\ell} L_{s,s,T}(\epart{s}{\ell},h)$ and $\post[\Xinit]{s}{s}(L_{s,s,T}(\cdot,h))= 0$,  Proposition \ref{prop:CLT-auxiliary-particle-filter} shows that
 $N^{1/2} \sumwght{s}^{-1} A_{s,s,T}(h)$ is asymptotically normal with zero mean and variance $\Sigma_{s,s,T}(h)$.
 Assume now that the property holds for some $t-1 \geq s$. We apply the results on triangular array of dependent random variables
 developed in \cite{douc:moulines:2008}; $V_{s,t,T}^N(h)$ may be expressed as
\begin{equation}
\label{eq:definition-V-triangular-array}
V_{s,t,T}^N(h)= \sum_{\ell=1}^N U_{M,\ell}\eqsp,  \quad U_{M,\ell}= N^{-1/2} \ewght{t}{\ell} G_{s,t,T}(\epart{t}{\ell},h) \eqsp,
\end{equation}
where $G_{s,t,T}$ is defined in \eqref{eq:definition-G}. Eq.~\eqref{eq:ineq0} shows that $\CPE{U_{M,\ell}}{\mcf{t-1}}=0$, $\ell=1,\dots,N$,
which implies:
$$
\CPE{\sum_{r=s}^t \alpha_r V_{s,r,T}^N(h)}{\mcf{t-1}}= \sum_{r=s}^{t-1} \alpha_r V_{s,r,T}^N(h) \eqsp.
$$
The induction assumption shows that $\CPE{\sum_{r=s}^t \alpha_r V_{s,r,T}^N(h)}{\mcf{t-1}}$ converges in distribution to a centered Gaussian distribution with covariance $\sum_{r=1}^{t-1} \alpha_r^2 \Sigma_{s,r,T}(h)$. We will now prove that
\[
\CPE{\exp\left( \rmi u \sqrt{N} V^N_{s,t,T}(h) \right)}{\mcf{t-1}}
\plim_{N \to \infty} \exp\left\{-\frac{u^2}{2}  \Sigma_{s,r,T}(h) \right\} \eqsp.
\]
By \cite[Corollary 11]{douc:moulines:2008}, setting
it remains to show that
\begin{align}
\label{eq:triangular-2}
&\sum_{\ell=1}^N \CPE{U_{M,\ell}^2}{\mcg{N,\ell-1}} \plim \Sigma_{s,r,T}(h) \eqsp, \\
\label{eq:triangular-3}
&\sum_{\ell=1}^N \CPE{U_{M,\ell}^2 \1_{\{|U_{M,\ell}| \geq \epsilon\}}}{\mcg{N,\ell-1}} \plim 0 \eqsp, \text{for any $\epsilon > 0$} \eqsp,
\end{align}
where, for $\ell \in \{1,\dots,N\}$, $\mcg{N,\ell} = \mcf{t-1} \vee \sigma( \ewght{t}{j}, \epart{t}{j}, j \leq \ell)$.
We first prove \eqref{eq:triangular-2}. It follows from the definitions that
\begin{align*}
&\sum_{\ell=1}^N  \CPE{U^2_{N,\ell}}{\mcg{N,\ell-1}} =
\CPE{\left( \ewght{t}{1} G_{s,t,T}(\epart{t}{1},h)\right)^2}{\mcf{t-1}}
\\&= \int \sum_{\ell=1}^N
\frac{\ewght{t-1}{\ell} \adjfunc{t}{t}{\epart{t-1}{\ell}} \kiss{t}{t}(\epart{t-1}{\ell},x)}
{\sum_{j=1}^N \ewght{t-1}{j} \adjfunc{t}{t}{\epart{t-1}{j}}}
\left( \ewghtfunc{t}(\epart{t-1}{\ell},x) G_{s,t,T}(x,h)  \right)^2 \rmd x \eqsp,
\\&= \frac{\sumwght{t-1}}{\sum_{j=1}^N \ewght{t-1}{j} \adjfunc{t}{t}{\epart{t-1}{j}}} \, \frac{1}{\sumwght{t-1}}\sum_{\ell=1}^N \ewght{t-1}{\ell} \Upsilon_{s,t,T}(\epart{t-1}{\ell},h) \eqsp,
\end{align*}
where
\begin{equation}
\label{eq:definition-A_N}
\Upsilon_{s,t,T}(x,h) \eqdef \adjfunc{t}{t}{x} \int \kiss{t}{t}(x,x') \ewghtfunc{t}^2(x,x') G^2_{s,t,T}(x',h) \rmd x' \eqsp.
\end{equation}
We will show, by applying Lemma \ref{lem:toutbete-1} that
\[
\frac{1}{\sumwght{t-1}}\sum_{\ell=1}^N \ewght{t-1}{\ell} \Upsilon_{s,t,T}(\epart{t-1}{\ell},h) \plim \post[\Xinit]{t-1}{t-1}\left( \upsilon_{s,t,T} (\cdot,h) \right)\eqsp,
\]
where $\upsilon_{s,t,T}$ is defined in \eqref{eq:definition-upsilon}.
\note{EM}{on suppose que $\Xset$ est un e.v.n}
To that purpose, we need to show that \textbf{(i)} there exists a constant $\upsilon_\infty$ such that $\esssup{\Upsilon_{s,t,T}(\cdot,h)} \leq \upsilon_\infty$ $\PP$-\as\, $\esssup{\upsilon_{s,t,T}(\cdot,h)} \leq \upsilon_\infty$, and \textbf{(ii)} for any constant $M \geq 0$, $\sup_{|x| \leq M} |\Upsilon_{s,t,T}(x,h) - \upsilon_{s,t,T}(x,h)| \plim 0$. Eq.~\eqref{eq:borne-ewghtfuncxG} shows that
$\esssup{\ewghtfunc{t}(\cdot) G_{s,t,T}(\cdot,h)} \leq 2 \esssup{\ewghtfunc{t}} \esssup{L_{t,t,T}(\cdot,\1)} \esssup{h}$, which implies that
\begin{equation}
\label{eq:borne-AN}
\esssup{\Upsilon_{s,t,T}(\cdot,h)} \leq 2  \esssup{\adjfunc{t}{t}{}} \esssup{\ewghtfunc{t}}^2 \esssup{L_{t,t,T}(\cdot,\1)}^2 \esssup{h}^2 \eqsp, \quad \PP-\as\ \eqsp.
\end{equation}
Similarly, the bound $\esssup{g_{s,t,T}} \leq \esssup{L_{t,t,T}(\cdot,\1)} \esssup{h}$ implies that
\begin{equation}
\label{eq:borne-a}
\esssup{\upsilon_{s,t,T}(\cdot,h)} \leq \esssup{\adjfunc{t}{t}{}} \esssup{\ewghtfunc{t}}^2 \esssup{L_{t,t,T}(\cdot,\1)}^2 \esssup{h}^2 \eqsp.
\end{equation}
The bounds \eqref{eq:borne-AN} and \eqref{eq:borne-a} imply \textbf{(i)}. To prove \textbf{(ii)}, first note that for all $M \geq 0$,
\begin{multline}
\label{eq:borne-Upsilon_N-a}
\sup_{|x| \leq M} |\Upsilon_{s,t,T}(x,h) - \upsilon_{s,t,T}(x,h)| \\
\leq \esssup{\adjfunc{t}{t}{}} \esssup{\ewghtfunc{t}} \int \sup_{|x| \leq M} \kiss{t}{t}(x,x') \left| G^2_{s,t,T}(x',h) - g^2_{s,t,T}(x',h) \right| \rmd x' \eqsp.
\end{multline}
Under (A\ref{assum:bound-proposal-kernel}), $\int \sup_{|x| \leq M} \kiss{t}{t}(x,x') \rmd x' < \infty$, and
since $\left| G^2_{s,t,T}(x',h) - g^2_{s,t,T}(x',h) \right| \leq 2  \esssup{L_{t,t,T}(\cdot,\1)}^2 \esssup{h}^2$, Lemma~\ref{lem:toutbete} shows that
\[
\int \sup_{|x| \leq M} \kiss{t}{t}(x,x') \left| G^2_{s,t,T}(x',h) - g^2_{s,t,T}(x',h) \right| \rmd x' \plim 0 \eqsp,
\]
provided that we can show that, for any given $x \in \Xset$, $G_{s,t,T}(x,h) \plim g_{s,t,T}(x,h)$. The definitions \eqref{eq:definition-Ft}
of the function $F_{s,t,T}$ and \eqref{eq:forward-filtering-backward-smoothing-alternate} of the smoothing distribution implies that
\[
F_{s,t,T}(x,h)= \frac{\post[\Xinit][hat]{s:t-1}{t-1}[q_{s,t-1}(\cdot,x) L_{s,t,T}([\cdot,x],h)]}{\post[\Xinit][hat]{t-1}{t-1}(q(\cdot,x))} \eqsp.
\]
Theorem~\ref{thm:Hoeffding-FFBS} show that $F_{s,t,T}(x,h) \plim g_{s,t,T}(x,h)$.
On the other hand, it follows from the definitions that $A_{s,t-1,T}(h) = \sumwght{t-1} \post[\Xinit][hat]{s:t}{t}(L_{s,t,T}(\cdot,h))$.\\
Since $\post[\Xinit]{s:t}{t}(L_{s,t,T}(\cdot,h))= 0$, this decomposition implies that $A_{s,t-1,T}(h) \plim 0$. Therefore,
$F_{s,t,T}(x,h) \plim g_{s,t,T}(x,h)$.

It remains to check the tightness condition \eqref{eq:triangular-3}. This property is straightforward since $|U_{M,i}|\leq N^{-1/2} \esssup{\ewghtfunc{t}} \esssup{h}$.

We now prove \eqref{eq:cltSi}. Using \eqref{eq:EspCond} and the definition of  $\post[\Xinit][hat]{s:T}{T}(h)$, we may write
\begin{multline*}
\sqrt{N}\left(\post[\Xinit][tilde]{s:T}{T}(h) -  \post[\Xinit]{s:T}{T}(h)\right)=  N^{-1/2} \sum_{\ell=1}^N \left[ h \left(\epart{s:T}{\chunk{J}{s}{T}^\ell}\right) - \CPE{h\left(\epartpredIndex{s:T}{J}\right)}{\mcf{T}}  \right] \\
+\sqrt{N}\left(\post[\Xinit][hat]{s:T}{T}(h) -  \post[\Xinit]{s:T}{T}(h)\right) \eqsp.
\end{multline*}
Note that since $(\chunk{J}{s}{T}^\ell)_{1 \leq \ell \leq N}$ are iid conditional to $\mcf{T}$, \eqref{eq:cltSi} follows from \eqref{eq:clt} and direct application of Corollary 11 in \cite{douc:moulines:2008} by noting that
\begin{multline*}
 N^{-1} \sum_{\ell=1}^N \CPE{\left\{h \left(\epart{s:T}{\chunk{J}{s}{T}^\ell}\right) - \CPE{h\left(\epartpredIndex{s:T}{J}\right)}{\mcf{T}} \right\}^2}{\mcf{T}} \\ =\post[\Xinit][hat]{s:T}{T}^2\left[h-  \post[\Xinit][hat]{s:T}{T}(h)\right]
 \plim \post[\Xinit]{s:T}{T}^2 \left[h-  \post[\Xinit]{s:T}{T}(h)\right]
\end{multline*}
\end{proof}

\section{Time-uniform deviation inequality}
\label{sec:TimeUniformExponentialFFBS}
In this section, we study the long-term behavior of the marginal fixed-interval smoothing distribution estimator.
For that purpose, it is required to impose a form of mixing condition on the Markov transition kernel. For simplicity, we consider conditions which are similar to the ones used in \cite[chapter 7.4]{delmoral:2004} or \cite[chapter 4]{cappe:moulines:ryden:2005}; these conditions can be relaxed, but at the expense of many technical problems. This condition requires that the transition kernel is strongly mixing in the sense that
\begin{assum}
\label{assum:strong-mixing-condition}
There exist two constants $0 < \sigma_- \leq \sigma_+ < \infty$, such that, for any $(x,x') \in \Xset \times \Xset$,
\begin{equation}
\label{eq:minorq}
\sigma_- \leq q(x,x') \leq \sigma_+\eqsp.
\end{equation}
In addition, there exists a constant $c_- > 0$ such that, $\int \Xinit(\rmd x_0) g_0(x_0) \geq c_-$ and
for all $t \geq 1$,
\begin{equation}
\label{eq:minorg}
\inf_{x \in \Xset} \int q(x,x') g_t(x') \rmd x' \geq c_- > 0 \eqsp.
\end{equation}
\end{assum}
Note that assumption \ref{assum:strong-mixing-condition} implies that $\nu(\Xset)<\infty$; in the sequel, we will consider without loss of generality that $\nu(\Xset)=1$. Moreover, the average number of simulations required in the accept-reject mechanism per sample of the FFBSi algorithm is bounded by $\sigma^+/\sigma^-$. 
An important consequence of the uniform ergodicity condition is the forgetting of either the initial or the final conditions. The
 following Proposition extends some of the results obtained initially in  \cite{delmoral:guionnet:1998} and later extended in \cite{delmoral:ledoux:miclo:2003} (see also \cite[Chapter 7]{delmoral:2004} and \cite[Chapter 2]{cappe:moulines:ryden:2005}).
Define
\begin{multline}\label{eq:definition-lst}
\ell_{s,t}(x_s,x_t) \eqdef \idotsint q(x_s, x_{s+1}) g_{s+1}(x_{s+1})\\
\times \prod_{u=s+1}^{t-1} q(x_u, x_{u+1}) g_{u+1}(x_{u+1}) \rmd \chunk{x}{s+1}{t-1}
\end{multline}
for $s<t$, and $\ell_{t,t}(x_s, x_t) \eqdef \delta_{x_s}(x_t)$, so that
\[
L_{s,t}(x_s, h) = \int \ell_{s,t}(x_s, x_t) h(x_t)\rmd x_t \eqsp.
\]
\begin{prop}
\label{prop:forgetting-initial-final-conditions}
Assume (A\ref{assum:strong-mixing-condition}). Then,  for all distributions $\Xinit$, $\Xinit'$ and for all $s\leq t$ and any bounded measurable functions $h$,
\begin{multline}
\label{eq:oubli-filtre-forward}
\left| \frac{\iint \Xinit(\rmd x_s) \ell_{s,t}(x_s,x_t) h(x_t) \rmd x_t}{\iint \Xinit(\rmd x_s) \ell_{s,t}(x_s,x_t) \rmd x_t} -
\frac{\iint \Xinit'(\rmd x_s) \ell_{s,t}(x_s,x_t) h(x_t) \rmd x_t}{\iint \Xinit'(\rmd x_s) \ell_{s,t}(x_s,x_t) \rmd x_t} \right|\\
\leq \rho^{t-s}\oscnorm{h} \eqsp,
\end{multline}
where $\rho \eqdef 1 -\sigma_-/\sigma_+$. In addition, for any bounded non-negative measurable functions $f$ and $f'$,
\begin{multline}
\label{eq:oubli-filtre-backward}
\left| \frac{\iint \Xinit(\rmd x_s) h(x_s) \ell_{s,t}(x_s,x_t) f(x_t) \rmd x_t }{\iint \Xinit(\rmd x_s) \ell_{s,t}(x_s,x_t) f(x_t) \rmd x_t} -
\frac{\iint \Xinit(\rmd x_s) h(x_s) \ell_{s,t}(x_s,x_t) f'(x_t) \rmd x_t}{\iint \Xinit(\rmd x_s) \ell_{s,t}(x_s,x_t) f'(x_t) \rmd x_t} \right|\\
\leq \rho^{t-s}\oscnorm{h} \eqsp,
\end{multline}
as soon as the denominators are non-zero.
\end{prop}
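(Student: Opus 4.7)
My plan is to treat \eqref{eq:oubli-filtre-forward} and \eqref{eq:oubli-filtre-backward} separately; each reduces to the uniform geometric contraction of an appropriate sequence of Markov kernels, each of which inherits a Doeblin-type minorization directly from the two-sided bound on $q$ in (A\ref{assum:strong-mixing-condition}).

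For \eqref{eq:oubli-filtre-forward}, I recognize the ratio on the left-hand side as the filter at time $t$ of the inhomogeneous Markov chain started from $\Xinit$ (resp.\ $\Xinit'$) at time $s$ and updated with $\chunk{Y}{s+1}{t}$. Using the semigroup identity $\ell_{s,t}(x_s, x_t) = \int \ell_{s,s+1}(x_s, x_{s+1})\,\ell_{s+1,t}(x_{s+1}, x_t)\rmd x_{s+1}$, this filter is the iterated image of the initial distribution under the one-step Bayes--predict operators
\[
T_u(\mu)(\rmd x_{u+1}) \propto \left[ \int \mu(\rmd x_u)\, q(x_u, x_{u+1}) \right] g_{u+1}(x_{u+1})\,\rmd x_{u+1}.
\]
Since (A\ref{assum:strong-mixing-condition}) gives $\sigma_- g_{u+1}(y)\rmd y \le q(x,y)g_{u+1}(y)\rmd y \le \sigma_+ g_{u+1}(y)\rmd y$ uniformly in $x$, I invoke the classical Dobrushin-type contraction for such nonlinear operators (see \cite[Chapter~4]{delmoral:2004} or \cite[Proposition~4.3.23]{cappe:moulines:ryden:2005}), which yields $\tvnorm{T_u(\mu) - T_u(\mu')} \le \rho\,\tvnorm{\mu - \mu'}$. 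Iterating $t-s$ times and combining with $|\mu(h) - \mu'(h)| \le \oscnorm{h}\tvnorm{\mu - \mu'}$ and $\tvnorm{\Xinit - \Xinit'} \le 1$ gives \eqref{eq:oubli-filtre-forward}.

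For \eqref{eq:oubli-filtre-backward}, Fubini rewrites the left-hand side as $\int \pi_{x_t}(h)\,(\mu^f - \mu^{f'})(\rmd x_t)$, where
\[
\pi_{x_t}(\rmd x_s) \eqdef \frac{\Xinit(\rmd x_s)\,\ell_{s,t}(x_s, x_t)}{\int \Xinit(\rmd x_s')\,\ell_{s,t}(x_s', x_t)},
\]
and $\mu^f, \mu^{f'}$ are the two probability measures on $\Xset$ with densities proportional to $f(x_t) \int \Xinit(\rmd x_s)\,\ell_{s,t}(x_s, x_t)$ and to the same quantity with $f'$ in place of $f$. Because $\mu^f$ and $\mu^{f'}$ are probability measures, this integral is bounded by $\sup_{x_t, x_t' \in \Xset} |\pi_{x_t}(h) - \pi_{x_t'}(h)|$, so it suffices to control the oscillation of $x_t \mapsto \pi_{x_t}(h)$. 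I interpret $\pi_{x_t}$ as the conditional law of $X_s$ given $X_s \sim \Xinit$, $\chunk{Y}{s+1}{t}$, and $X_t = x_t$; by the reversed Markov property of the conditioned chain, $\pi_{x_t} = \delta_{x_t} B_{t-1} B_{t-2} \cdots B_s$, where $B_u(x_{u+1}, \rmd x_u) \propto \alpha_u(\rmd x_u)\,q(x_u, x_{u+1})$ and $\alpha_u$ is the filter at time $u$ under $\Xinit$ and $\chunk{Y}{s+1}{u}$. The bound $\sigma_- \le q \le \sigma_+$ gives the Doeblin minorization $B_u(x_{u+1}, A) \ge (\sigma_-/\sigma_+)\,\alpha_u(A)$, so each $B_u$ has Dobrushin coefficient at most $\rho$. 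Submultiplicativity of Dobrushin coefficients then yields $\tvnorm{\pi_{x_t} - \pi_{x_t'}} \le \rho^{t-s}$, hence $|\pi_{x_t}(h) - \pi_{x_t'}(h)| \le \rho^{t-s}\oscnorm{h}$, and \eqref{eq:oubli-filtre-backward} follows.

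The main subtlety lies in the forward part: unlike $B_u$---a genuine Markov kernel to which Dobrushin's classical lemma applies verbatim---the operator $T_u$ is nonlinear because of the data-dependent normalization by $\mu Q_u(\Xset)$. Obtaining the sharp constant $\rho = 1 - \sigma_-/\sigma_+$ (rather than only $1-\sigma_-$ or a cruder value) requires exploiting \emph{both} sides of the two-sided bound on $q(x,y)g_{u+1}(y)$; this is the content of the Del Moral--Guionnet contraction estimate, which I would cite rather than reprove. The remaining ingredients---semigroup decomposition, Fubini, identification of the reversed Markov chain, and the Doeblin-to-Dobrushin conversion---are by-now standard manipulations.
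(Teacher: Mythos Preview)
Your proposal is correct and follows essentially the same route as the paper: for \eqref{eq:oubli-filtre-forward} both you and the paper simply defer to the classical forgetting estimates of \cite{delmoral:2004} and \cite[Proposition~4.3.23]{cappe:moulines:ryden:2005}, while for \eqref{eq:oubli-filtre-backward} both arguments introduce the backward smoothing kernels $B_u(x_{u+1},\rmd x_u)\propto \alpha_u(\rmd x_u)\,q(x_u,x_{u+1})$, use the two-sided bound on $q$ to obtain the Doeblin minorization $B_u(x_{u+1},\cdot)\ge(\sigma_-/\sigma_+)\,\alpha_u(\cdot)$, deduce the Dobrushin contraction $\oscnorm{B_u\psi}\le\rho\,\oscnorm{\psi}$, iterate $t-s$ times, and bound the left-hand side by the oscillation of the composed backward kernels acting on $h$. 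Your packaging via $\pi_{x_t}$ and the probability measures $\mu^f,\mu^{f'}$ is exactly the paper's identity $\mu_f\bigl[B_{\Xinit,s,t}\cdots B_{\Xinit,s,s+1}(\cdot,h)\bigr]-\mu_{f'}\bigl[B_{\Xinit,s,t}\cdots B_{\Xinit,s,s+1}(\cdot,h)\bigr]$ written pointwise in $x_t$.
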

\begin{proof}
\newcommand{\backkernel}[2]{B_{#1,#2}}
\newcommand{\dbackkernel}[2]{b_{#1,#2}}
The first statement is well-known; see for example \cite{delmoral:2004} and \cite[Proposition 4.3.23]{cappe:moulines:ryden:2005}. To check the second statement, denote by $\backkernel{\Xinit}{s,t}$ the backward smoothing kernel, defined for $s \leq t$ and for any real-valued, measurable function $\psi$ on $\Xset^2$ by
$$\PE_{X_s\sim\Xinit}\left[\psi(X_{t-1}, X_t) | Y_{s:t}\right] = \int \psi(x_{t-1}, x_t)\post[\Xinit]{s:t}{s:t}(x_{t}) \backkernel{\Xinit}{s,t}(x_t, \rmd x_{t-1})\eqsp,$$
where $\post[\Xinit]{s:t-1}{s:t-1}$ is defined in \eqref{eq:convNot}.
This kernel is absolutely continuous \wrt\ the dominating measure $\lambda$ and its density is given by
$$\dbackkernel{\Xinit}{s,t}(x_t, x_{t-1}) = \frac{\post[\Xinit]{s:t-1}{s:t-1}(x_{t-1}) q (x_{t-1}, x_t)} {\int \post[\Xinit]{s:t-1}{s:t-1}(x) q (x, x_t) \rmd x} \eqsp.$$
 Under (A\ref{assum:strong-mixing-condition}), this transition density is lower bounded by
$$ \dbackkernel{\Xinit}{s,t}(x_t, x_{t-1}) \geq \frac{\sigma_-}
{\int \post[\Xinit]{s:t-1}{s:t-1}(x) \sigma_+  \rmd x} = \frac{\sigma_-}{\sigma_+} \eqsp.$$
Since $\oscnorm{\backkernel{\Xinit}{s,t}(\cdot, h)} \leq \rho \oscnorm{h}$, it follows that
\begin{equation}
\label{eq:bound-oscnorm}
\oscnorm{\backkernel{\Xinit}{s,t}\ldots\backkernel{\Xinit}{s,s+1}(\cdot, h)} \leq \rho^{t-s} \oscnorm{h} \eqsp.
\end{equation}
Note that
\begin{align*}
&\left|\frac{\iint \Xinit(\rmd x_s) h(x_s) \ell_{s,t}(x_s,x_t) f(x_t) \rmd x_t }{\iint \Xinit(\rmd x_s)  \ell_{s,t}(x_s,x_t) f(x_t) \rmd x_t }  - \frac{\iint \Xinit(\rmd x_s) h(x_s) \ell_{s,t}(x_s,x_t) f'(x_t) \rmd x_t }{\iint \Xinit(\rmd x_s)  \ell_{s,t}(x_s,x_t) f'(x_t) \rmd x_t }\right|\\
 &=  \left|\frac{\post[\Xinit]{t}{s:t}(\backkernel{\Xinit}{s,t}\dots\backkernel{\Xinit}{s,s+1}(\cdot, h)f(\cdot))}{\post[\Xinit]{t}{s:t}(f(\cdot))} - \frac{\post[\Xinit]{t}{s:t}(\backkernel{\Xinit}{s,t}\dots\backkernel{\Xinit}{s,s+1}(\cdot, h)f'(\cdot))}{\post[\Xinit]{t}{s:t}(f'(\cdot))}\right|
 \\
 &=  \left| \mu_f\left[\backkernel{\Xinit}{s,t}\dots\backkernel{\Xinit}{s,s+1}(\cdot, h)\right] - \mu_{f'}\left[\backkernel{\Xinit}{s,t}\dots\backkernel{\Xinit}{s,s+1}(\cdot, h)\right] \right|
\end{align*}
with, for any $A \in \Xsigma$,  $\mu_f(A)  \eqdef \post[\Xinit]{t}{s:t}(\1_A f)/ \post[\Xinit]{t}{s:t}(f)$.
Therefore, since for any probabilities $\mu$ and $\mu'$ on $\Xsigma$ and any measurable function $\psi$,
$|\mu(\psi) - \mu'(\psi)| \leq \oscnorm{\psi}$, \eqref{eq:bound-oscnorm} shows that
$$
\left| \mu_f\left[\backkernel{\Xinit}{s,t}\dots\backkernel{\Xinit}{s,s+1}(\cdot, h)\right] - \mu_{f'}\left[\backkernel{\Xinit}{s,t}\dots\backkernel{\Xinit}{s,s+1}(\cdot, h)\right] \right|\leq \rho^{t-s} \oscnorm{h} \eqsp.
$$
\end{proof}

The goal of this section consists in establishing, under the assumptions mentioned above, that the FFBS approximation of the \emph{marginal} fixed interval smoothing probability satisfies an exponential  deviation inequality with constants that are uniform in time.

 The first step in the proof consists in showing a time-uniform deviation inequality for the auxiliary particle filter. Here again, the proof of this result could be adapted from \cite[Section 7.4.3]{delmoral:2004}. For the sake of clarity, we present a self-contained proof, which is valid under assumptions that are weaker than those used in \cite[Chapter 7]{delmoral:2004}.
\begin{prop}
\label{prop:time-uniform-exponential-inequality-forward}
Assume that A\ref{assum:bound-likelihood}--\ref{assum:strong-mixing-condition} hold with $T= \infty$. Then, the filtering distribution satisfies a time-uniform exponential deviation inequality, \ie\
there exist constants $B$ and $C$ such that, for all integers $N$ and $t \geq 0$, all measurable functions $h$ and all $\epsilon > 0$,
\begin{equation}
\label{eq:TU-Hoeffding-Filtering-unnormalized}
\PP\left[ \left| N^{-1}\sum_{i=1}^N \swght{t}{i} h(\epartpred{t}{i}) - \frac{\post[\Xinit]{t}{t-1}(g_t h)}{\post[\Xinit]{t-1}{t-1}(\adjfunc{t}{t}{})}\right| \geq \epsilon \right]
\leq B \rme^{-C N \epsilon^2 / \esssup{h}^2} \eqsp,
\end{equation}
\begin{equation}
\PP\left[ \left| \post[\Xinit][hat]{t}{t}(h) - \post[\Xinit]{t}{t}(h) \right| \geq \epsilon \right] \leq B \rme^{-C N \epsilon^2 / \oscnorm[2]{h}} \eqsp. \label{eq:TU-Hoeffding-Filtering-normalized}
\end{equation}
\end{prop}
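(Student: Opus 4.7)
The strategy is to revisit the inductive argument of Proposition~\ref{prop:exponential-inequality-forward} but replace its naive telescoping, which loses a multiplicative constant at every step and therefore produces a time-dependent bound, by a decomposition whose increments contract geometrically thanks to the forgetting property of Proposition~\ref{prop:forgetting-initial-final-conditions}. Under A\ref{assum:strong-mixing-condition}, the filter forgets its initial condition at rate $\rho = 1 - \sigma_-/\sigma_+$, and this is precisely what will prevent the accumulation of error as $t \to \infty$.

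Concretely, for any $s \leq t$, I introduce the exact forward-filtering operator $\Phi_{s,t}$ defined on probability measures $\mu$ on $(\Xset,\Xsigma)$ by
\[
\Phi_{s,t}(\mu)(h) \eqdef \frac{\iint \mu(\rmd x_s)\,\ell_{s,t}(x_s,x_t)\,h(x_t)\,\rmd x_t}{\iint \mu(\rmd x_s)\,\ell_{s,t}(x_s,x_t)\,\rmd x_t},
\]
with $\ell_{s,t}$ as in \eqref{eq:definition-lst}, so that $\post[\Xinit]{t}{t}(h) = \Phi_{s,t}(\post[\Xinit]{s}{s})(h)$ for every $s \leq t$. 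Writing $\Psi_s$ for the exact one-step update from time $s-1$ to $s$, and setting $\Psi_0 \post[\Xinit][hat]{-1}{-1} \eqdef \Xinit$ by convention, the particle error admits the telescoping decomposition
\[
\post[\Xinit][hat]{t}{t}(h) - \post[\Xinit]{t}{t}(h) = \sum_{s=0}^{t}\bigl[\Phi_{s,t}(\post[\Xinit][hat]{s}{s})(h) - \Phi_{s,t}(\Psi_s\post[\Xinit][hat]{s-1}{s-1})(h)\bigr],
\]
each summand isolating a single one-step particle fluctuation at time $s$ that is subsequently propagated \emph{exactly} to time $t$. I then apply the forgetting inequality \eqref{eq:oubli-filtre-forward} to each summand, which yields a contraction prefactor $\rho^{t-s}$ independent of $t$. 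Because $\post[\Xinit][hat]{s}{s} - \Psi_s \post[\Xinit][hat]{s-1}{s-1}$ is a signed measure, the contraction has to be extracted at the level of the unnormalized kernel $\ell_{s,t}$: the numerator of the ratio factorizes as $\rho^{t-s}$ times a centered one-step particle average, while the denominator is bounded below uniformly in $s,t$ by \eqref{eq:minorq}--\eqref{eq:minorg}. Conditionally on $\mcf{s-1}$, the resulting one-step error $\epsilon_s$ is a normalized sum of $N$ i.i.d.\ bounded random variables whose bound is controlled only by $\esssup{\ewghtfunc{s}}$, $\esssup{g_s}$ and $c_-$, none of which depend on $t$; it therefore admits a sub-Gaussian conditional MGF bound $\log\PE[\exp(\lambda\epsilon_s)\mid\mcf{s-1}] \leq K\lambda^2/N$.

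Iterating this conditional bound along the filtration $\{\mcf{s}\}_{s \leq t}$ yields
\[
\log\PE\!\left[\exp\!\Bigl(\lambda\sum_{s=0}^t \rho^{t-s}\epsilon_s\Bigr)\right] \leq \frac{K\lambda^2}{N}\sum_{s=0}^t \rho^{2(t-s)} \leq \frac{K\lambda^2}{N(1-\rho^2)},
\]
and a Chernoff optimization in $\lambda$ delivers \eqref{eq:TU-Hoeffding-Filtering-normalized} with constants independent of $t$. The unnormalized inequality \eqref{eq:TU-Hoeffding-Filtering-unnormalized} will follow by the same scheme applied to the underlying Feynman--Kac recursion, or alternatively by combining \eqref{eq:TU-Hoeffding-Filtering-normalized} with Lemma~\ref{lem:inegEssentielle} and the uniform lower bound on the normalizing constants. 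The main obstacle is the factorization step inside each telescope increment: one has to exhibit, uniformly in $t$, a clean product of (i) an $(t-s)$-fold mixing contraction $\rho^{t-s}$ (for which \eqref{eq:bound-oscnorm} applied to the iterated backward smoothing kernel is the natural tool) and (ii) a genuinely one-step particle fluctuation whose conditional MGF does \emph{not} inherit any $t$-dependence. Once this uniform factorization is in hand, the MGF summation and the Chernoff step are routine.
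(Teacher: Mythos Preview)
Your telescoping decomposition is exactly the one the paper uses: the paper's $B_{s,t}(h)/B_{s,t}(\1)$ with $B_{s,t}(h)=N^{-1}\sum_i \ewght{s}{i}\,L_{s,t}(\epart{s}{i},h)/\esssup{L_{s,t}(\cdot,\1)}$ is precisely your $\Phi_{s,t}(\post[\Xinit][hat]{s}{s})(h)$, and the increment $B_{s,t}(h)/B_{s,t}(\1)-B_{s-1,t}(h)/B_{s-1,t}(\1)$ coincides with your $\Phi_{s,t}(\post[\Xinit][hat]{s}{s})-\Phi_{s,t}(\Psi_s\post[\Xinit][hat]{s-1}{s-1})$. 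The paper likewise extracts a $\rho^{t-s}$ contraction from the forward forgetting estimate \eqref{eq:oubli-filtre-forward}, just as you propose.

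Where you diverge is the combination step. The paper does \emph{not} attempt a conditional MGF / Chernoff iteration; instead, for each increment it applies Lemma~\ref{lem:inegEssentielle} conditionally on $\mcf{s-1}$, with $M=\rho^{t-s}\oscnorm{h}$, to obtain a tail bound of the form $\PP[|D_s|>\epsilon]\leq B\exp\{-CN\epsilon^2/(\rho^{t-s}\oscnorm{h})^2\}$, and then invokes the elementary Lemma~\ref{lem:pasfor} (a union bound over the geometric scale $\sqrt{i}\,\rho^i$) to turn the sum of these bounds into a time-uniform inequality.

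Your MGF route runs into a genuine obstacle, which you flag but do not resolve. Each increment $D_s$ is a \emph{self-normalized ratio} $a_N/b_N-c_N/d_N$, where the random denominator $b_N=N^{-1}\sum_i \ewght{s}{i}L_{s,t}(\epart{s}{i},\1)/\esssup{L_{s,t}(\cdot,\1)}$ has no pathwise lower bound under A\ref{assum:bound-likelihood}--A\ref{assum:strong-mixing-condition} (the weights $\ewght{s}{i}$ are only bounded above). Consequently $D_s$ is not, conditionally on $\mcf{s-1}$, an average of $N$ i.i.d.\ bounded variables; it is not centered ($\CPE{D_s}{\mcf{s-1}}\neq 0$ because of self-normalization bias); and the sub-Gaussian bound $\log\CPE{\exp(\lambda D_s)}{\mcf{s-1}}\leq K\lambda^2\rho^{2(t-s)}/N$ you need is simply not available. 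Lemma~\ref{lem:inegEssentielle} is precisely the device that sidesteps this: it splits the ratio into $(a_N-(c_N/d_N)b_N)/b$, which \emph{is} a conditionally centered i.i.d.\ average with terms bounded by $C\rho^{t-s}$, plus a cross term controlled by $|b_N-b|$, and it only ever uses a lower bound on the $\mcf{s-1}$-measurable limit $b\geq\beta$, never on $b_N$ itself. That decomposition delivers tail bounds rather than MGF bounds, which is why the paper closes with Lemma~\ref{lem:pasfor} instead of a martingale-Chernoff step.
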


\begin{proof}
We first prove \eqref{eq:TU-Hoeffding-Filtering-normalized}.
Without loss of generality, we will assume that $\post[\Xinit]{t}{t}(h)=0$.
Similar to \cite[Eq. (7.24)]{delmoral:2004}, the quantity $\post[\Xinit][hat]{t}{t}(h) $ is decomposed as,
\begin{equation}
\label{eq:decomposition-time-uniform}
\post[\Xinit][hat]{t}{t}(h)  = \sum_{s=1}^t \left( \frac{B_{s,t}(h)}{B_{s,t}(\1)} - \frac{B_{s-1,t}(h)}{B_{s-1,t}(\1)}\right) + \frac{B_{0,t}(h)}{B_{0,t}(\1)} \eqsp,
\end{equation}
where
\begin{equation}
\label{eq:definition-Bst}
B_{s,t}(h)= N^{-1} \sum_{i=1}^N \ewght{s}{i} \frac{L_{s,t}(\epart{s}{i},h)}{\esssup{L_{s,t}(\cdot,\1)}} \eqsp.
\end{equation}
We first establish some exponential inequality for $B_{0,t}(h)/B_{0,t}(\1)$ where the dependence in $t$ will be explicitly expressed. For that purpose, we will apply Lemma \ref{lem:inegEssentielle} by successively checking Conditions \eqref{item:inegEssentielle-borne}, \eqref{item:inegEssentielle-expo1}, and  \eqref{item:inegEssentielle-expo2},  with
$$
\begin{cases}
a_N \eqdef B_{0,t}(h)\\
b_N\eqdef B_{0,t}(\1)\\
c_N\eqdef 0\\
b \eqdef \int \Xinit(\rmd x_0) g_0(x_0) \frac{L_{0,t}(x_0,\1)}{\esssup{L_{0,t}(\cdot,\1)}}\\
\beta \eqdef \frac{\sigma_-}{\sigma_+}  \int \Xinit(\rmd x_0) g_0(x_0) \eqsp.
\end{cases}
$$
Under the strong mixing condition A\ref{assum:strong-mixing-condition}, it may be shown that (see \cite[chapter 4]{delmoral:2004}
or \cite[section 4.3.3]{cappe:moulines:ryden:2005})
\begin{equation}
\label{eq:lower-upper-bound}
\frac{\sigma_-}{\sigma_+} \leq \frac{L_{s,t}(\epart{}{},\1)}{\esssup{L_{s,t}(\cdot,\1)}} \leq \frac{\sigma_+}{\sigma_-} \eqsp,
\end{equation}
which implies that $b \geq \beta$. Since $\post[\Xinit]{t}{t}(h)=0$, Eqs.~\eqref{eq:interpretation-L} and \eqref{eq:oubli-filtre-forward} imply
\begin{multline}
\label{eq:decomp-1}
\left|\frac{a_N}{b_N}\right|=\left|\frac{B_{0,t}(h)}{B_{0,t}(\1)} - \post[\Xinit]{t}{t}(h)\right| \\
= \left|\frac{\sum_{i=1}^N \ewght{0}{i} L_{0,t}(\epart{0}{i},h) }{\sum_{i=1}^N \ewght{0}{i} L_{0,t}(\epart{0}{i},\1)} -
\frac{\int \Xinit(\rmd x_0) g_0(x_0) L_{0,t}(x_0,h) }{\int \Xinit(\rmd x_0) g_0(x_0) L_{0,t}(x_0,\1)} \right| \leq \rho^t \oscnorm{h}\eqsp.
\end{multline}
This shows condition (I) with $M=\rho^t \esssup{h}$. We now turn to condition (II). We have
$$
b_N-b= N^{-1} \sum_{i=1}^N \ewght{0}{i} \frac{L_{0,t}(\epart{0}{i},\1)}{\esssup{L_{0,t}(\cdot,\1)}}- \int  \XinitIS{0}(\rmd x_0)  \ewghtfunc{0}(x_0) g_0(x_0) \frac{L_{0,t}(x_0,\1)}{\esssup{L_{0,t}(\cdot,\1)}}  \eqsp.
$$
Since $\left|\ewght{0}{i} \frac{L_{0,t}(\epart{0}{i},\1)}{\esssup{L_{0,t}(\cdot,\1)}}\right|\leq \frac{\sigma_+}{\sigma_-} \esssup{\ewghtfunc{0}}$, we have by Hoeffding's inequality
$$
\PP\left[ \left| b_N-b \right| \geq \epsilon\right] \leq B \exp\left(-C N\epsilon^2\right) \eqsp,
$$
where the constants $B$ and $C$ do not depend on $t$. This shows condition (II). We now check condition (III). We have
$$
a_N-(c_N/d_N)b_N=a_N=N^{-1} \sum_{i=1}^N \ewght{0}{i} \frac{L_{0,t}(\epart{0}{i},h)}{\esssup{L_{0,t}(\cdot,\1)}} \eqsp.
$$
Now, as $\post[\Xinit]{t}{t}(h)=0$ implies $\int \Xinit(\rmd x) L_{0,t}(x,h)=0$, it holds that $\PE(a_N)=0$. Moreover,
\begin{multline*}
\left| \ewght{0}{i} \frac{L_{0,t}(\epart{0}{i},h)}{\esssup{L_{0,t}(\cdot,\1)}}\right|\leq  \esssup{\ewghtfunc{0}}\left| \frac{L_{0,t}(\epart{0}{i},\1)}{{\esssup{L_{0,t}(\cdot,\1)}}}\left( \frac{L_{0,t}(\epart{0}{i},h)}{L_{0,t}(\epart{0}{i},\1)}- \post[\Xinit]{t}{t}(h)\right)\right|\\ \leq \esssup{\ewghtfunc{0}}\frac{\sigma_+}{\sigma_-} \rho^t \oscnorm{h} \eqsp,
\end{multline*}
using  \eqref{eq:interpretation-L} and \eqref{eq:oubli-filtre-forward}. Condition (III) follows from Hoeffding's inequality. Then,  Lemma \ref{lem:inegEssentielle} gives
$$
\PP\left[\left|B_{0,t}(h)/B_{0,t}(\1)\right|>\epsilon\right] \leq B \rme^{-C N \epsilon^2/(\rho^t \oscnorm{h})^2} \eqsp,
$$
where the constants $B$ and $C$ do not depend on $t$.

We now consider for $1 \leq s \leq t$ the difference $B_{s,t}(h)/B_{s,t}(\1) - B_{s-1,t}(h)/B_{s-1,t}(\1)$, where $B_{s,t}$ is defined in \eqref{eq:definition-Bst}. We again use Lemma \ref{lem:inegEssentielle} where $\PP(\cdot)=\CPP{\cdot}{\mcf[]{s-1}}$,  $a_N=B_{s,t}(h)$, $b_N=B_{s,t}(\1)$, $c_N=B_{s-1,t}(h)$, $d_N=B_{s-1,t}(\1)$,
$$
b=\frac{  \sum_{i=1}^N \ewght{s-1}{i} \int Q(\epart{s-1}{i},\rmd x) g_s(x)  L_{0,s}(x,\1)}{\esssup{L_{0,s}(\cdot,\1) } \sum_{\ell=1}^N \ewght{s-1}{\ell} \adjfunc{s}{s}{\epart{s-1}{\ell}}} \eqsp, \quad \mbox{and} \quad  \beta=\frac{c_- \sigma_-}{\sigma_+ \esssup{\adjfunc{s}{s}{}}}  \eqsp,
$$
where $\sigma_-$ and $c_-$ are defined in \eqref{eq:minorq} and \eqref{eq:minorg}, respectively.
It appears using \eqref{eq:lower-upper-bound} and (A\ref{assum:strong-mixing-condition}) that $b\geq \beta$.
Moreover,
\begin{equation}
\label{eq:borne-31-12-1}
\left| \frac{a_N}{b_N} - \frac{c_N}{d_N} \right| =\left| \frac{\sum_{i=1}^N \ewght{s}{i} L_{s,t}(\epart{s}{i},h) }{\sum_{i=1}^N \ewght{s}{i} L_{s,t}(\epart{s}{i},\1)}-\frac{\sum_{i=1}^N \ewght{s-1}{i} L_{s-1,t}(\epart{s-1}{i},h) }{\sum_{i=1}^N \ewght{s-1}{i} L_{s-1,t}(\epart{s-1}{i},\1)}\right| \leq  \rho^{t-s}\oscnorm{h} \eqsp,
\end{equation}
showing condition \eqref{item:inegEssentielle-borne} with $M= \rho^{t-s}\oscnorm{h}$. We now check condition \eqref{item:inegEssentielle-expo1}. By \eqref{eq:relat1}, we have
$$
b_N-b= N^{-1} \sum_{i=1}^N \ewght{s}{i} \frac{L_{s,t}(\epart{s}{i},\1)}{\esssup{L_{s,t}(\cdot,\1)}}- \CPE{\ewght{s}{1}\frac{L_{s,t}(\epart{s}{1},\1)}{\esssup{L_{s,t}(\cdot,\1)}}}{\mcf{s-1}}{} \eqsp.
$$
Thus, since $\left| \ewght{s}{i} {L_{s,t}(\epart{s}{i},\1)}/{\esssup{L_{s,t}(\cdot,\1)}}\right| \leq \sup_t \esssup{\ewghtfunc{t}} \sigma_+/\sigma_-$, we have by conditional Hoeffding's inequality
$$
\CPP{|b_N-b|>\epsilon}{\mcf{s-1}{}} \leq B \rme^{-C N \epsilon^2/(\oscnorm{h})^2}
$$
showing condition \eqref{item:inegEssentielle-expo1} with constants which do not depend on $s$. Moreover, write $
a_N-\frac{c_N}{d_N}b_N =  N^{-1}\sum_{\ell=1}^N  \eta^\ell
$ where
$$
\eta^\ell\eqdef \ewght{s}{\ell} \frac{L_{s,t}(\epart{s}{\ell},h)}{\esssup{L_{s,t}(\cdot,\1)}} - \frac{\sum_{i=1}^N \ewght{s-1}{i} L_{s-1,t}(\epart{s-1}{i},h) }{\sum_{i=1}^N \ewght{s-1}{i} L_{s-1,t}(\epart{s-1}{i},\1)}\left(\ewght{s}{\ell} \frac{L_{s,t}(\epart{s}{\ell},\1)}{\esssup{L_{s,t}(\cdot,\1)}}\right)\eqsp.
$$
Since $\{(\epart{t}{\ell},\ewght{t}{\ell})\}_{\ell=1}^N$ are \iid\ conditionally to the $\sigma$-field  $\mcf{t-1}$, we have that $\{\eta^\ell\}_{\ell=1}^{N}$ are also \iid\ conditionally to $\mcf{t-1}$. Moreover, it can be easily checked using \eqref{eq:relat1} that $\CPE{\eta^1}{\mcf{t-1}}=0$. In order to apply the conditional Hoeffding inequality, we need to check that $\eta^\ell$ is bounded. In fact, using \eqref{eq:interpretation-L} and \eqref{eq:oubli-filtre-forward},
$$
|\eta^\ell|  =\frac{L_{s,t}(\epart{s}{i},\1)}{\esssup{L_{s,t}(\cdot,\1)}} \left| \frac{L_{s,t}(\epart{s}{\ell},h)}{L_{s,t}(\epart{s}{\ell},\1)} - \frac{\sum_{i=1}^N \ewght{s-1}{i} L_{s-1,t}(\epart{s-1}{i},h) }{\sum_{i=1}^N \ewght{s-1}{i} L_{s-1,t}(\epart{s-1}{i},\1)}\right| \leq \frac{\sigma_+}{\sigma_-} \rho^{t-s}\oscnorm{h}
$$
Consequently,
\begin{multline*}
\CPP{\left|a_N-\frac{c_N}{d_N}b_N\right| >\epsilon}{\mcf{s-1}{}}= \CPP{\left| N^{-1}\sum_{\ell=1}^N  \eta^\ell\right|>\epsilon}{\mcf{s-1}{}} \\
\leq B \exp\left\{-CN \left(\frac{\epsilon}{\rho^{t-s}\oscnorm{h}}\right)^2\right\} \eqsp,
\end{multline*}
where the constants $B$ and $C$ do not depend on $s$. This shows condition \eqref{item:inegEssentielle-expo2}. Finally by Lemma \ref{lem:inegEssentielle},
$$
\CPP{\left|\frac{B_{s,t}(h)}{B_{s,t}(\1)} - \frac{B_{s-1,t}(h)}{B_{s-1,t}(\1)}\right| >\epsilon}{\mcf{s-1}{}} \leq B \exp\left\{-CN \left(\frac{\epsilon}{\rho^{t-s}\oscnorm{h}}\right)^2\right\}
$$
The proof is concluded by using Lemma \ref{lem:pasfor}.
\end{proof}

We now show that the time uniform deviation inequality for the filtering estimator extends, under the mixing assumption (A\ref{assum:strong-mixing-condition}) on the Markov kernel $Q$, to the FFBS smoothing estimator.
The key result to establish a time uniform bound for the FFBS smoothing estimator  is the following Proposition, which establishes the uniform ergodicity of the particle approximation of the backward kernel.
\begin{lem}
\label{lem:uniform-forgetting-backward-kernel}
Assume A\ref{assum:strong-mixing-condition}.
Then, for any probability distributions $\mu$ and $\mu'$ on the set $\{1,\dots,N\}$, any integers $0 \leq s < t $ and any function $h$
on $\{1,\dots,N\}$,
\[
\left| \sum_{i_{s:t}=1}^N h(i_s) \swghtcIndex{s:t}{i} \left\{ \mu(i_t) - \mu'(i_t) \right\} \right| \leq \oscnorm{h} \rho^{t-s} \eqsp,
\]
where $\swghtcIndex{s:t}{i}$ is defined in \eqref{eq:definition-swghtcIndex}.
\end{lem}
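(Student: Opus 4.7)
The plan is to identify $\swghtcIndex{s:t}{i}$ (with $i_t$ fixed) as the transition probability of a backward Markov chain on $\{1,\dots,N\}$, and to exploit a uniform Doeblin minorization supplied by A\ref{assum:strong-mixing-condition}.

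First, for $u \in \{s+1,\dots,t\}$ I introduce the (random, conditional on the particles and weights) transition matrix
\[
\hat B_{u-1}(i,j) \eqdef \frac{\ewght{u-1}{j}\, q(\epart{u-1}{j},\epart{u}{i})}{\sum_{\ell=1}^N \ewght{u-1}{\ell}\, q(\epart{u-1}{\ell},\epart{u}{i})},
\]
which is a Markov kernel on $\{1,\dots,N\}$ for each fixed $i$. The product structure in \eqref{eq:definition-swghtcIndex} reads $\swghtcIndex{s:t}{i} = \prod_{u=s+1}^t \hat B_{u-1}(i_u,i_{u-1})$. Summing over $i_{s+1},\dots,i_{t-1}$ after multiplying by $h(i_s)$ therefore identifies the left-hand side of the lemma with $\sum_{i_t=1}^N (\hat B_t \hat B_{t-1}\cdots \hat B_{s+1} h)(i_t)\,\{\mu(i_t)-\mu'(i_t)\}$.

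Second, A\ref{assum:strong-mixing-condition} immediately gives the uniform minorization
\[
\hat B_{u-1}(i,j) \geq \frac{\sigma_-}{\sigma_+}\cdot \frac{\ewght{u-1}{j}}{\sum_{\ell=1}^N \ewght{u-1}{\ell}},
\]
since the numerator is bounded below by $\sigma_-\ewght{u-1}{j}$ and the denominator above by $\sigma_+\sum_\ell \ewght{u-1}{\ell}$. The right-hand side is $(\sigma_-/\sigma_+)\pi_{u-1}(j)$ for a probability $\pi_{u-1}$ not depending on $i$, which is a Doeblin condition. Hence the Dobrushin contraction coefficient of $\hat B_{u-1}$ is at most $1-\sigma_-/\sigma_+=\rho$, and consequently $\oscnorm{\hat B_{u-1} f} \leq \rho\,\oscnorm{f}$ for every bounded $f$ on $\{1,\dots,N\}$.

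Third, iterating this contraction across the $t-s$ kernels yields $\oscnorm{\hat B_t\cdots \hat B_{s+1} h} \leq \rho^{t-s}\oscnorm{h}$. The lemma then follows from the elementary bound $|\mu(g)-\mu'(g)|\leq \oscnorm{g}$, valid for any two probabilities $\mu,\mu'$ on a finite set and any bounded $g$, applied to $g=\hat B_t\cdots \hat B_{s+1} h$. There is no real obstacle: the only substantive ingredient is the Doeblin minorization, and the rest is bookkeeping to recognize the backward-chain structure encoded in the product $\swghtcIndex{s:t}{i}$.
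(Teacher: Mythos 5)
Your proof is correct and follows essentially the same route as the paper's: both interpret the product defining $\swghtcIndex{s:t}{i}$ as a non-homogeneous backward Markov chain on $\{1,\dots,N\}$ and bound the Dobrushin contraction coefficient of each transition matrix by $\rho = 1-\sigma_-/\sigma_+$ before iterating and using $|\mu(g)-\mu'(g)|\leq \oscnorm{g}$. If anything, your Doeblin minorization with reference measure proportional to the weights $\ewght{u-1}{j}$ is stated more carefully than the paper's claim that the matrix entries themselves are bounded below by $\sigma_-/\sigma_+$, which literally holds only when the weights are comparable; the contraction conclusion is the same.
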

\begin{proof}
For $u \in \{s+1, \dots, t\}$, define $W_u$ the $N \times N$ matrix with entries
\[
W_u^{i,j}= \frac{\ewght{u-1}{j} q(\epart{u-1}{j},\epart{u}{i})}{\sum_{\ell=1}^N \ewght{u-1}{\ell} q(\epart{u-1}{\ell},\epart{u}{i})} \eqsp, \quad 1 \leq i, j \leq N \eqsp.
\]
The matrix $W_u$ can be interpreted as the Markov transition matrix of a non-homogeneous Markov chain on the state-space $\{1,\dots,N\}$ (which may be seen as the particle approximation of the backward kernel \eqref{eq:backward-kernel}). Using this notation, for any probability distribution $\mu$ on $\{1,\dots,N\}$ and any function $h$ on $\{1,\dots,N\}$, the sum $\sum_{i_{s:t}=1}^N h(i_s) \swghtcIndex{s:t}{i} \mu(i_t)$ may be interpreted as the expectation of the function $h$ under the marginal distribution at time $t-s$ of a non-homogeneous Markov chain started at time $0$ from the initial distribution $\mu$ and driven the transition matrix $W_{t}$, $W_{t-1}$, $\dots$:
\[
\sum_{i_{s:t}=1}^N h(i_s) \swghtcIndex{s:t}{i} \mu(i_t)= \sum_{i_{s:t}=1}^N \mu(i_t) W_{t}^{i_t,i_{t-1}} \dots W_{s+1}^{i_{s+1},i_s} h(i_s) \eqsp.
\]
Under (A\ref{assum:strong-mixing-condition}), the entries of these transition kernels are lower-bounded by $\sigma_-/\sigma_+$. Therefore, the Dobrushin coefficient of each transition matrix $W_u$, $u \in \{s+1,\dots, t\}$ is upper bounded by $\rho$ (see \cite{dobrushin:1956}). The result follows.
\end{proof}
 We then show that the existence of time-uniform exponential deviation inequality for the auxiliary particle filter approximation of the filtering distribution extends to the FFBS smoothing estimator.

\begin{thm}
\label{theo:Hoeffding-uniform}
Assume A\ref{assum:bound-likelihood}--\ref{assum:strong-mixing-condition} hold with $T= \infty$.
Then, there exist constants $0\leq B,\ C< \infty$ such that for all integers $N$, $s$, and $T$, $s \leq T$, all $\epsilon > 0$,
\begin{align}
&\PP\left[ \left| \post[\Xinit][hat]{s}{T}(h) - \post[\Xinit]{s}{T}(h) \right| \geq \epsilon \right] \leq B\rme^{-C N \epsilon^2 / \oscnorm[2]{h}} \eqsp, \label{eq:TU-Hoeffding-smoothing-normalized}\\
& \PP\left[ \left| \post[\Xinit][tilde]{s}{T}(h) - \post[\Xinit]{s}{T}(h) \right| \geq \epsilon \right] \leq B\rme^{-C N \epsilon^2 / \oscnorm[2]{h}} \eqsp, \label{eq:TU-Hoeffding-smoothing-normalized-2}
\end{align}
where $\post[\Xinit][hat]{s}{T}(h)$ and $\post[\Xinit][tilde]{s}{T}(h)$ are defined in \eqref{eq:forward-filtering-backward-smoothing} and \eqref{eq:FFBSi:estimator}.
\end{thm}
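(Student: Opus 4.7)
The plan is to mirror Proposition~\ref{prop:time-uniform-exponential-inequality-forward} and Theorem~\ref{thm:Hoeffding-FFBS}: decompose the error as a telescoping sum whose increments decay geometrically in the time lag, then combine the corresponding Hoeffding--type bounds via a geometric splitting of $\epsilon$. Writing $\bar h([x_{s:T}])\eqdef h(x_s)$, we have $\post[\Xinit][hat]{s}{T}(h) = \post[\Xinit][hat]{s:T}{T}(\bar h)$, and assuming without loss of generality $\post[\Xinit]{s}{T}(h)=0$ the decomposition \eqref{eq:decomp_Smooth} used in the proof of Theorem~\ref{thm:Hoeffding-FFBS} expresses $\post[\Xinit][hat]{s}{T}(h)$ as the sum of a boundary term $A_{s,s,T}(\bar h)/A_{s,s,T}(\1)$ and the telescoping increments $A_{s,t,T}(\bar h)/A_{s,t,T}(\1) - A_{s,t-1,T}(\bar h)/A_{s,t-1,T}(\1)$ for $s+1 \leq t \leq T$.

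The key structural observation is that, since $\bar h$ depends only on $x_s$, one has $L_{s,t,T}([x_{s:t-1},x],\bar h) = h(x_s)\,\psi_{t,T}(x)$ with $\psi_{t,T}(x)\eqdef L_{s,t,T}(x_{s:t},\1)$ depending only on $x_t$. Plugging this into \eqref{eq:definition-Ft} yields $F_{s,t,T}(x,\bar h) = \psi_{t,T}(x)\,\widehat{\Phi}^N_{s,t-1}(h;x)$ with
$\widehat{\Phi}^N_{s,t-1}(h;x) \eqdef \sum_{i_{s:t-1}} h(\epart{s}{i_s})\,\swghtcIndex{s:t-1}{i}\,\mu_x(i_{t-1})$
and $\mu_x(j) \propto \ewght{t-1}{j}\,q(\epart{t-1}{j},x)$. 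Unfolding one step of the recursion for $\swghtcIndex{s:t-1}{i}$ similarly recasts $A_{s,t-1,T}(\bar h)/A_{s,t-1,T}(\1)$ as $\sum_{i_{s:t-1}} h(\epart{s}{i_s})\,\swghtcIndex{s:t-1}{i}\,\tilde\mu(i_{t-1})$ with $\tilde\mu(\ell) \propto \ewght{t-1}{\ell}\,\psi_{t-1,T}(\epart{t-1}{\ell})$. Both expressions fit the template of Lemma~\ref{lem:uniform-forgetting-backward-kernel}, which thus yields
$|\widehat{\Phi}^N_{s,t-1}(h;x) - A_{s,t-1,T}(\bar h)/A_{s,t-1,T}(\1)| \leq \oscnorm{h}\,\rho^{t-1-s}$
uniformly in $x$, $T$ and the particle cloud, so $G_{s,t,T}(\cdot,\bar h)$ inherits this bound up to the factor $\psi_{t,T}$.

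To turn this into an exponential inequality for each increment I would invoke Lemma~\ref{lem:inegEssentielle} exactly as in the proof of Theorem~\ref{thm:Hoeffding-FFBS}, but with numerators and denominators normalized by $\esssup{\psi_{t,T}}$ so that this unbounded factor cancels. Under A\ref{assum:strong-mixing-condition}, \eqref{eq:lower-upper-bound} gives $\psi_{t,T}/\esssup{\psi_{t,T}}\geq \sigma_-/\sigma_+$, providing the uniform lower bound on $A_{s,t,T}(\1)/\esssup{\psi_{t,T}}$ required by the lemma. Combined with Proposition~\ref{prop:time-uniform-exponential-inequality-forward}, this yields, for constants $B,C$ independent of $(t,T)$,
$\PP[|A_{s,t,T}(\bar h)/A_{s,t,T}(\1) - A_{s,t-1,T}(\bar h)/A_{s,t-1,T}(\1)|>\eta] \leq B\exp\{-CN\eta^2/(\rho^{t-1-s}\oscnorm{h})^2\}$.
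The boundary term is handled by the same Lemma~\ref{lem:inegEssentielle}/Proposition~\ref{prop:time-uniform-exponential-inequality-forward} apparatus applied to the renormalized function $h\,\psi_{s,T}/\esssup{\psi_{s,T}}$, using $\post[\Xinit]{s}{s}(h\psi_{s,T})=\post[\Xinit]{s}{s}(\psi_{s,T})\,\post[\Xinit]{s}{T}(h)=0$ and, if sharper decay is desired, the backward forgetting estimate \eqref{eq:oubli-filtre-backward}. A union bound with $\eta_t \eqdef (1-\rho^{1/2})\,\rho^{(t-s)/2}\,\epsilon$ then gives $\sum_{k\geq 0}B\exp\{-C'N\epsilon^2\,\rho^{-k}/\oscnorm{h}^2\} \leq B'\exp\{-C'N\epsilon^2/\oscnorm{h}^2\}$, uniformly in $T$, establishing \eqref{eq:TU-Hoeffding-smoothing-normalized}. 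Inequality \eqref{eq:TU-Hoeffding-smoothing-normalized-2} then follows from \eqref{eq:TU-Hoeffding-smoothing-normalized} by the same conditional Hoeffding argument used to derive \eqref{eq:Hoeffding-2} from \eqref{eq:Hoeffding-1} in Theorem~\ref{thm:Hoeffding-FFBS}, since, conditional on $\mcf{T}$, the backward indices $\{J_s^\ell\}_{\ell=1}^N$ are i.i.d.\ and the centered summand $h(\epart{s}{\cdot})-\post[\Xinit][hat]{s}{T}(h)$ has oscillation bounded by $\oscnorm{h}$.

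The main obstacle is ensuring that the rate in each telescoping increment is \emph{genuinely} uniform in $T$: the function $\psi_{t,T}$ that appears inside $G_{s,t,T}$ is not bounded uniformly in $T-t$, and the whole argument rests on the $\esssup{\psi_{t,T}}$--renormalization together with the uniform lower bound $\sigma_-/\sigma_+$ coming from \eqref{eq:lower-upper-bound}, which is the crucial consequence of the mixing assumption A\ref{assum:strong-mixing-condition}. The second delicate step is producing the uniform geometric contraction $\rho^{t-1-s}$ on the particle backward chain, for which Lemma~\ref{lem:uniform-forgetting-backward-kernel} is the essential tool.
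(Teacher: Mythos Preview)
Your proposal is correct and follows essentially the same route as the paper: the same telescoping decomposition \eqref{eq:decomp_Smooth} applied to $\bar h$, the same normalization of $A_{s,t,T}$ by $\esssup{L_{t,t,T}(\cdot,\1)}$ (your $\esssup{\psi_{t,T}}$) together with the ratio bound \eqref{eq:lower-upper-bound}, the same invocation of Lemma~\ref{lem:uniform-forgetting-backward-kernel} to obtain the $\rho^{t-1-s}$ contraction on the particle backward chain, and the same use of Lemma~\ref{lem:inegEssentielle} with Proposition~\ref{prop:time-uniform-exponential-inequality-forward} to bound each increment. The only cosmetic difference is that the paper packages your final geometric union bound as Lemma~\ref{lem:pasfor} rather than writing out the splitting $\eta_t\propto\rho^{(t-s)/2}\epsilon$ explicitly.
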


\begin{proof}
\eqref{eq:TU-Hoeffding-smoothing-normalized-2} follows from \eqref{eq:TU-Hoeffding-smoothing-normalized} along the same lines as in Theorem \ref{thm:Hoeffding-FFBS}.
We use the notations of Theorem \ref{thm:Hoeffding-FFBS}. Let $h$ be a function defined on $\Xset$ and $s,T$ be positive integers such that $s\leq T$. Without loss of generality, we assume that $\post[\Xinit]{s}{T}(h)= 0$. We will denote by
\begin{equation}
\label{eq:definition-barh}
\bar{h}: (x_s,\ldots,x_T)\mapsto h(x_s) \eqsp.
\end{equation}
For $s \in \{0,\dots, T\}$, consider again the following decomposition
\[
\post[\Xinit][hat]{s}{T}(h)= \frac{A_{s,T,T}(\bar{h})}{A_{s,T,T}(\1)} = \frac{A_{s,s,T}(\bar{h})}{A_{s,s,T}(\1)} + \sum_{t=s+1}^T \left\{ \frac{A_{s,t,T}(\bar{h})}{A_{s,t,T}(\1)} - \frac{A_{s,t-1,T}(\bar{h})}{A_{s,t-1,T}(\1)}\right\} \eqsp,
\]
where $\bar{h}$, $A_{s,s,T}$, and $A_{s,t,T}$ are defined in \eqref{eq:definition-barh}, \eqref{eq:definition-Fs-As} and \eqref{eq:definition-Ast}, respectively. Note that  $\bar{h}$ depends of $\epartpredIndex{s:t}{i}$ only through its first component $\epart{s}{i_s}$; therefore, it follows from the definition \eqref{eq:definition-Lt} of $L_{s,t,T}$  that $L_{s,t,T}(\epartpredIndex{s:t}{i},\bar{h})= h(\epart{s}{i_s}) L_{t,t,T}(\epart{t}{i_t},\1)$. In particular, $L_{s,t,T}(\epartpredIndex{s:t}{i},\1)= L_{t,t,T}(\epart{t}{i_t},\1)$.\\
We first consider the term $A_{s,s,T}(\bar{h})/A_{s,s,T}(\1)$. It follows from the definition that
\[
A_{s,s,T}(\bar{h})=  \sum_{\ell=1}^N \ewght{s}{\ell} F_{s,s,T}(\epart{s}{\ell},\bar{h}) =  \sum_{\ell=1}^N \ewght{s}{\ell} h(\epart{s}{\ell}) L_{s,s,T}(\epart{s}{\ell},\1) \eqsp.
\]
We apply Lemma \ref{lem:inegEssentielle} with
$$
\begin{cases}
a_N= \sumwght{s}^{-1} A_{s,s,T}(\bar{h})/\esssup{L_{s,s,T}(\cdot,\1)}\\
b_N= \sumwght{s}^{-1} A_{s,s,T}(\1)/\esssup{L_{s,s,T}(\cdot,\1)}\\
c_N=0\\
b=\post[\Xinit]{s}{s}[L_{s,s,T}(\cdot,\1)]/\esssup{L_{s,s,T}(\cdot,\1)}\\
\beta=\sigma_-/\sigma_+\eqsp.
\end{cases}
$$
Using the definition \eqref{eq:definition-Lt} and (A\ref{assum:strong-mixing-condition}), for any $s \in \{0,\dots, T\}$,
\begin{align}
&\label{eq:majorationL}
\frac{\sigma_-}{\sigma_+} \leq \frac{L_{s,s,T}(\epart{}{},\1)}{\esssup{L_{s,s,T}(\cdot,\1)}} \leq \frac{\sigma_+}{\sigma_-} \eqsp.
\end{align}
Therefore $b \geq \beta$. Then, note that $|a_N/b_N| \leq \esssup{h}$; therefore condition \eqref{item:inegEssentielle-borne} is satisfied with $M= \esssup{h}$.
 We now check condition \eqref{item:inegEssentielle-expo1}. We have
 $$
 b_N -b =\post[\Xinit][hat]{t}{t}\left[\frac{L_{s,s,T}(\cdot,\1)}{\esssup{L_{s,s,T}(\cdot,\1)}}\right] - \post[\Xinit]{t}{t}\left[\frac{L_{s,s,T}(\cdot,\1)}{\esssup{L_{s,s,T}(\cdot,\1)}}\right]\eqsp.
 $$
 Inequalities \eqref{eq:TU-Hoeffding-Filtering-normalized} and \eqref{eq:majorationL} show that there exists constants $B$ and $C$ such that for any $\epsilon>0$ and all positive integers $s\leq T$,
\[
\PP\left( \left| b_N -b \right| \geq \epsilon \right) \leq B \rme^{-C N \epsilon^2} \eqsp.
\]
Hence, condition \eqref{item:inegEssentielle-expo1} is satisfied. Moreover,
$$
a_N-\frac{c_N}{d_N}b_N=a_N= \sumwght{s}^{-1} \sum_{\ell=1}^N \ewght{s}{\ell} G_s(\epart{s}{\ell})\eqsp, \quad \mbox{where} \quad G_{s}(\epart{}{})= h(\epart{}{}) \frac{L_{s,s,T}(\epart{}{},\1)}{\esssup{L_{s,s,T}(\cdot,\1)}}\eqsp.
$$
 Using the definition \eqref{eq:definition-Lt} of $L_{s,s,T}$,
\[
\post[\Xinit]{s}{T}(h)= \frac{\post[\Xinit]{s}{s}[h(\cdot) L_{s,s,T}(\cdot,\1)]}{\post[\Xinit]{s}{s}[L_{s,s,T}(\cdot,\1)]} \eqsp.
\]
The condition $\post[\Xinit]{s}{T}(h)=0$ therefore implies that $\post[\Xinit]{s}{s}(G_s)=0$. On the other hand, using \eqref{eq:majorationL}, $\esssup{G_s} \leq \esssup{h} \sigma_+/\sigma_-$.
Hence, by \eqref{eq:TU-Hoeffding-Filtering-normalized},
\[
\PP\left[ \left| a_N-\frac{c_N}{d_N}b_N \right| \geq \epsilon \right] \leq B \rme^{-C N \epsilon^2 / \oscnorm[2]{h}
} \eqsp,
\]
for some $B$ and $C$ which do not depend on $s$ nor $T$.
Hence condition \eqref{item:inegEssentielle-expo2} is satisfied. Combining the result above, Lemma \ref{lem:inegEssentielle} therefore shows that,
\[
\PP\left[ \left| \frac{A_{s,s,T}(\bar{h})}{A_{s,s,T}(\1)} \right| \geq \epsilon \right] \leq 2B \exp\left(-C N \epsilon^2/ \oscnorm[2]{h}\right) \eqsp.
\]
We now consider the term $A_{s,t,T}(\bar{h})/A_{s,t,T}(\1) - A_{s,t-1,T}(\bar{h})/A_{s,t-1,T}(\1)$ for $t>s$. For that purpose, we use Lemma \ref{lem:inegEssentielle} with
$$
\begin{cases}
a_N= N^{-1} A_{s,t,T}(\bar{h})/\esssup{L_{t,t,T}(\cdot,\1)}\\
b_N= N^{-1} A_{s,t,T}(\1)/\esssup{L_{t,t,T}(\cdot,\1)} \\
c_N= A_{s,t-1,T}(\bar{h})\\
d_N = A_{s,t-1,T}(\1)\\
b= \post[\Xinit]{t-1}{t-1}[L_{t-1,t-1,T}(\cdot,\1)]/(\esssup{L_{t,t,T}(\cdot,\1)}\post[\Xinit]{t-1}{t-1}(\adjfunc{t}{t}{}))\\
\beta= c_- \sigma_- / \left(\sigma_+ \esssup{\adjfunc{s}{s}{}}\right)
\end{cases}
$$
Inequality \eqref{eq:majorationL} and \eqref{eq:minorg} directly imply that $b\geq \beta$. Moreover,
\begin{align}
\nonumber
&\frac{a_N}{b_N} - \frac{c_N}{d_N}
= \sum_{i_{s:t}=1}^N h(\epart{s}{i_s}) \swghtcIndex{s:t-1}{i} \frac{\ewght{t-1}{i_{t-1}} q(\epart{t-1}{i_{t-1}},\epart{t}{i_t})}{\sum_{\ell=1}^N \ewght{t-1}{\ell} q(\epart{t-1}{\ell},\epart{t}{i_t})}
\frac{\ewght{t}{i_t} L_{t,t,T}(\epart{t}{i_t},\1)}{\sum_{\ell=1}^N \ewght{t}{\ell} L_{t,t,T}(\epart{t}{\ell},\1)} \\ \nonumber
&\hspace{130pt}
- \sum_{i_{s:t-1}}^N h(\epart{s}{i_s}) \swghtcIndex{s:t-1}{i} \frac{\ewght{t-1}{i_{t-1}} L_{t-1,t-1,T}(\epart{t-1}{i_{t-1}},\1)}{\sum_{\ell=1}^N \ewght{t-1}{\ell} L_{t-1,t-1,T}(\epart{t-1}{\ell},\1)} \\
\label{eq:decomposition-diff-A}
& \quad = \sum_{i_{s:t}=1}^N h(\epart{s}{i_s}) \swghtcIndex{s:t-1}{i} \left\{ \mu_{t-1}(i_{t-1},\epart{t}{i_t}) - \mu'_{t-1}(i_{t-1}) \right\} \frac{\ewght{t}{i_t} L_{t,t,T}(\epart{t}{i_t},\1)}{\sum_{\ell=1}^N \ewght{t}{\ell} L_{t,t,T}(\epart{t}{\ell},\1)} \eqsp,
\end{align}
where $\mu_{t-1}(\cdot,\epart{}{})$ and $\mu'_{t-1}(\cdot)$ are two probability distributions on the set $\{1,\dots,N\}$ defined as
\begin{equation}
\label{eq:definition-alpha-beta}
\mu_{t-1}(i,\epart{}{})= \frac{\ewght{t-1}{i} q(\epart{t-1}{i},\epart{}{})}{\sum_{\ell=1}^N \ewght{t-1}{\ell} q(\epart{t-1}{\ell},\epart{}{})} \quad \text{and} \quad
\mu'_{t-1}(i)= \frac{\ewght{t-1}{i} L_{t-1,t-1,T}(\epart{t-1}{i},\1)}{\sum_{\ell=1}^N \ewght{t-1}{\ell} L_{t-1,t-1,T}(\epart{t-1}{\ell},\1)} \eqsp.
\end{equation}
It follows from Lemma \ref{lem:uniform-forgetting-backward-kernel} that, for all $\epart{}{}$
\[
\left| \sum_{i_{s:t-1}=1}^N h(\epart{s}{i_s}) \swghtcIndex{s:t-1}{i} \left\{ \mu_{t-1}(i_{t-1},\epart{}{}) - \mu'_{t-1}(i_{t-1}) \right\} \right| \leq \oscnorm{h} \rho^{t-1-s} \eqsp,
\]
which in turn implies that
\begin{equation}
\label{eq:boundabcd}
\left| \frac{a_N}{b_N} - \frac{c_N}{d_N}\right| \leq \oscnorm{h} \rho^{t-1-s} \eqsp,
\end{equation}
showing condition \eqref{item:inegEssentielle-borne} with $M= \oscnorm{h} \rho^{t-s-1}$. We now consider the condition \eqref{item:inegEssentielle-expo1}. It follows from the definition of $b_N$ that:
\[
b_N=  N^{-1}\sum_{i_t=1}^N \ewght{t}{i_t} \frac{L_{t,t,T}(\epart{t}{i_t},\1)}{\esssup{L_{t,t,T}(\cdot,\1)}} \eqsp.
\]
By \eqref{eq:TU-Hoeffding-Filtering-unnormalized} and \eqref{eq:majorationL},
\[
\PP\left[| b_N-b | \geq \epsilon \right] \leq B \rme^{-C N\epsilon^2}
\]
where the constants $B$ and $C$ do not depend on the time indexes $t$ and $T$.
This relation shows condition \eqref{item:inegEssentielle-expo1}. We finally consider condition \eqref{item:inegEssentielle-expo2}. Using \eqref{eq:decomposition-diff-A}, $a_N - \frac{c_N}{d_N} b_N= \sumwght{t}^{-1} \sum_{i=1}^N \ewght{t}{i} G_t(\epart{t}{i})$,
where
\begin{multline*}
G_t(\epart{}{})=  \sum_{i_{s:t-1}=1}^N h(\epart{s}{i_s}) \swghtcIndex{s:t-1}{i} \\ \times
\left\{ \frac{\ewght{t-1}{i_{t-1}} q(\epart{t-1}{i_{t-1}},\epart{}{})}{\sum_{\ell=1}^N \ewght{t-1}{\ell}q(\epart{t-1}{\ell},\epart{}{})} - \frac{\ewght{t-1}{i_{t-1}} L_{t-1,t-1,T}(\epart{t-1}{i_{t-1}},\1)}{\sum_{\ell=1}^N \ewght{t-1}{\ell} L_{t-1,t-1,T}(\epart{t-1}{\ell},\1)} \right\} \frac{L_{t,t,T}(\epart{}{},\1)}{\esssup{L_{t,t,T}(\cdot,\1)}} \eqsp.
\end{multline*}
Using that
\begin{multline*}
\CPE{\frac{\ewght{t-1}{i_{t-1}} q(\epart{t-1}{i_{t-1}},\epart{t}{1}) \ewght{t}{1} L_{t,t,T}(\epart{t}{1},\1)}{\sum_{\ell=1}^N \ewght{t-1}{\ell} q(\epart{t-1}{\ell},\epart{t}{1})}}{\mcf{t-1}} \\
= \frac{\ewght{t-1}{i_{t-1}} \int q(\epart{t-1}{i_{t-1}},x) g_t(x) L_{t,t,T}(x,\1)\rmd x}{\sum_{\ell=1}^N \ewght{t-1}{\ell} \adjfunc{t}{t}{\epart{t-1}{\ell}}}= \frac{\ewght{t-1}{i_{t-1}} L_{t-1,t-1,T}(\epart{t-1}{i_{t-1}},\1)}{\sum_{\ell=1}^N \ewght{t-1}{\ell} \adjfunc{t}{t}{\epart{t-1}{\ell}}}
\end{multline*}
and
\[
\CPE{\ewght{t}{1} L_{t,t,T}(\epart{t}{1},\1)}{\mcf{t-1}}= \frac{\sum_{\ell=1}^N \ewght{t-1}{\ell} L_{t-1,t-1,T}(\epart{t-1}{\ell},\1)}{\sum_{\ell=1}^N \ewght{t-1}{\ell} \adjfunc{t}{t}{\epart{t-1}{\ell}}}
\]
it follows that $\CPE{\ewght{t}{1} G_t(\epart{t}{1})}{\mcf{t-1}}= 0$. On the other hand, using Lemma \ref{lem:uniform-forgetting-backward-kernel} (with $\mu$ and $\mu'$ defined in \eqref{eq:definition-alpha-beta}) and \eqref{eq:majorationL}, $|G_t(\epart{}{})| \leq \rho^{t-s-1} \oscnorm{h}$. We may therefore apply the Hoeffding
inequality to show that
\[
\PP\left[ \left| N^{-1} \sum_{i=1}^N \ewght{t}{i} G_t(\epart{t}{i}) \right| \geq \epsilon \right] \leq 2 \exp\left(-\frac{2 N \epsilon^2}{\oscnorm[2]{h} \rho^{2(t-s-1)}} \right)
\]
showing that condition \eqref{item:inegEssentielle-expo2} is satisfied with constants that do not depend on $t$. Combining these results, Lemma \ref{lem:inegEssentielle} shows that, there
exists some constants $B$ and $C$, such that, for all $s < t$,
\[
\PP\left[ \left| \frac{A_{s,t,T}(h)}{A_{s,t,T}(\1)} - \frac{A_{s,t-1,T}(h)}{A_{s,t-1,T}(\1)}\right|  \geq \epsilon \right] \leq B \exp\left(-C N \frac{\epsilon^2}{\rho^{2(t-s-1)} \oscnorm[2]{h}}\right) \eqsp.
\]
The proof is concluded by applying Lemma~\ref{lem:pasfor}.
\end{proof}

\section{A limiting expression of the variance of the marginal smoothing distribution}
\label{sec:TimeUniformCLTFFBS}
In this section, we study  the expression of the variance \eqref{eq:expression-covariance} for the FFBS approximation of the marginal smoothing distribution. In particular, we show that under the strong mixing condition (A\ref{assum:strong-mixing-condition}) the asymptotic variance
of the marginal smoothing estimator $\asymVar[\Xinit]{s:T}{T}{\bar{h}}$, where $\bar{h}$ is defined in \eqref{eq:definition-barh} has a finite limiting value has $T \to \infty$ for a given value of $s$. We will also show that this variance is upper bounded uniformly in time, allowing to construct uniform confidence intervals.

The first step consists in showing that the asymptotic variance of the auxiliary particle filter has a finite limiting value as $T\to \infty$, and deriving an upper-bound for this limit.

\begin{prop}\label{prop:uniform-bound-filtering}
Assume (A\ref{assum:bound-likelihood}--\ref{assum:strong-mixing-condition}) hold for $T=\infty$. Then, with the notations of Proposition \ref{prop:CLT-auxiliary-particle-filter},
\[ \asymVar[\Xinit]{s}{s}{h} \leq \frac{\sigma_+}{c_- \sigma_-} \sup_{r \geq 0} \esssup{\ewghtfunc{r}} \esssup{\adjfunc{r}{r}{}}  \frac{1}{1-\rho^2} \oscnorm[2]{h} \eqsp . \]
\end{prop}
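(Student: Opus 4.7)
The plan is to bound each term $V_{\Xinit,r,s}[h]$ in the decomposition \eqref{eq:recursionasymVar-filtering} by a geometric factor $\rho^{2(s-r)} \oscnorm[2]{h}$ multiplied by uniform constants, and then sum the resulting geometric series. Since the asymptotic variance appearing in the CLT \eqref{eq:clt-auxiliary-particle-filter} is $\asymVar[\Xinit]{s}{s}{h-\post[\Xinit]{s}{s}(h)}$ and $\oscnorm{\cdot}$ is shift-invariant, it suffices to treat the case $\post[\Xinit]{s}{s}(h)=0$, which I will assume without loss of generality.

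The first step exploits the forgetting of the initial condition. Writing $L_{r,s}(x,h)/L_{r,s}(x,\1)$ as the posterior expectation of $h(X_s)$ given $X_r=x$ and $Y_{r+1:s}$, an application of \eqref{eq:oubli-filtre-forward} with initial distributions $\delta_x$ and $\post[\Xinit]{r}{r}$ yields
\[
\left|\frac{L_{r,s}(x,h)}{L_{r,s}(x,\1)} - \post[\Xinit]{s}{s}(h)\right| \leq \rho^{s-r}\oscnorm{h},
\]
so that, under $\post[\Xinit]{s}{s}(h)=0$, $|L_{r,s}(x,h)| \leq \rho^{s-r}\oscnorm{h} L_{r,s}(x,\1)$, and consequently $L^2_{r,s}(x,h) \leq \rho^{2(s-r)} \oscnorm[2]{h}\, \esssup{L_{r,s}(\cdot,\1)}\, L_{r,s}(x,\1)$.

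For $r \geq 1$, I plug this bound into the numerator of $V_{\Xinit,r,s}[h]$. Using the identity $\kiss{r}{r}(x',x)\ewghtfunc{r}^2(x',x) = \ewghtfunc{r}(x',x)\, q(x',x) g_r(x)/\adjfunc{r}{r}{x'}$, one obtains
\[
\int \kiss{r}{r}(x',x)\ewghtfunc{r}^2(x',x) L^2_{r,s}(x,h)\,\rmd x \leq \rho^{2(s-r)} \oscnorm[2]{h}\,\esssup{\ewghtfunc{r}}\,\esssup{L_{r,s}(\cdot,\1)}\frac{1}{\adjfunc{r}{r}{x'}} \int q(x',x) g_r(x) L_{r,s}(x,\1)\,\rmd x.
\]
Integrating against $\post[\Xinit]{r-1}{r-1}(\rmd x')\adjfunc{r}{r}{x'}$ recognises the predictive mass $\post[\Xinit]{r}{r-1}[g_r L_{r,s}(\cdot,\1)]$, and the factor $\post[\Xinit]{r-1}{r-1}(\adjfunc{r}{r}{})$ in the numerator is trivially controlled by $\esssup{\adjfunc{r}{r}{}}$. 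For the denominator, (A\ref{assum:strong-mixing-condition}) combined with \eqref{eq:lower-upper-bound} and \eqref{eq:minorg} gives
\[
\post[\Xinit]{r}{r-1}[g_r L_{r,s}(\cdot,\1)] \geq \frac{\sigma_-}{\sigma_+}\esssup{L_{r,s}(\cdot,\1)}\,\post[\Xinit]{r}{r-1}(g_r) \geq \frac{c_-\sigma_-}{\sigma_+}\esssup{L_{r,s}(\cdot,\1)}.
\]
After cancellation, one obtains
\[
V_{\Xinit,r,s}[h] \leq \rho^{2(s-r)} \oscnorm[2]{h}\,\frac{\sigma_+}{c_-\sigma_-}\esssup{\ewghtfunc{r}}\esssup{\adjfunc{r}{r}{}}.
\]
The treatment of $V_{\Xinit,0,s}[h]$ is analogous, using $\rho_0[\ewghtfunc{0}(\cdot)L_{0,s}(\cdot,\1)] = \post[\Xinit]{0}{-1}[g_0 L_{0,s}(\cdot,\1)]$ and the convention $\adjfunc{0}{0}{}\equiv 1$; the same bound holds with $r=0$.

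Summing over $r \in \{0,\dots,s\}$ gives a geometric series bounded by $\sum_{r=0}^s \rho^{2(s-r)} \leq (1-\rho^2)^{-1}$, which yields the claimed uniform bound. The main bookkeeping difficulty is keeping the normalisations straight when trading factors of $L_{r,s}(\cdot,\1)$ in the numerator against the explicit denominator $\post[\Xinit]{r}{r-1}[g_r L_{r,s}(\cdot,\1)]^2$; the forgetting inequality is the crucial ingredient that produces the geometric decay $\rho^{2(s-r)}$.
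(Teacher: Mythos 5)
Your argument is correct and follows essentially the same route as the paper's proof: bound each term $V_{\Xinit,r,s}[h]$ of the decomposition \eqref{eq:recursionasymVar-filtering} using the forgetting inequality (the paper's \eqref{eq:borne2}) to extract the factor $\rho^{2(s-r)}\oscnorm[2]{h}$, control the remaining ratios via \eqref{eq:lower-upper-bound}, \eqref{eq:minorg} and the bounds on $\ewghtfunc{r}$ and $\adjfunc{r}{r}{}$, and sum the geometric series. The only difference is cosmetic: you trade one factor $L_{r,s}(\cdot,\1)$ against $\esssup{L_{r,s}(\cdot,\1)}$ and then bound $\esssup{L_{r,s}(\cdot,\1)}/\post[\Xinit]{r}{r-1}[g_r L_{r,s}(\cdot,\1)]$, whereas the paper keeps the normalized ratio inside a perfect square; the estimates are the same rearranged, and your constants match the stated bound.
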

\begin{proof}
Without loss of generality, we assume that $\post[\Xinit]{s}{s}(h)=0$.
Note that, for any $r \in \{0, \dots, s\}$, under (A\ref{assum:strong-mixing-condition}),
\begin{align}
\label{eq:borne1}
&\frac{\int Q(x,\rmd x') g_r(x') L_{r,s}(x',\1)}{\post[\Xinit]{r}{r-1}[g_r(\cdot) L_{r,s}(\cdot,\1)]} \leq \frac{\sigma_+}{\sigma_-} \eqsp, \\
\label{eq:borne2}
& \left|\frac{L_{r,s}(x,h)}{L_{r,s}(x,\1)}-\frac{\post[\Xinit]{r}{r}[L_{r,s}(\cdot,h)]}{\post[\Xinit]{r}{r}[L_{r,s}(\cdot,\1)]}\right|
\leq \rho^{s-r} \eqsp.
\end{align}
We now bound  $V_{\Xinit,r,s}[h]$, $r=0\dots s$, defined \eqref{eq:defVOs} and \eqref{eq:defVrs}. First, for $r=0$,
using that $\Xinit(L_{0,s}(\cdot, h))=0$, Proposition~\ref{prop:forgetting-initial-final-conditions} and inequality \eqref{eq:lower-upper-bound} show that
\begin{align*}
V_{\Xinit,0,s}[h] &= \frac{\rho_0\left( \ewghtfunc{0}^2(\cdot) L^2_{0,s}(\cdot,h) \right)}{\left(\post[\Xinit]{0}{-1}\left[g_0(\cdot) L_{0,s}(\cdot,\1) \right] \right)^2} = \frac{\Xinit\left( \frac{\rmd \Xinit}{\rmd \rho_0}(\cdot) \left[ g_0(\cdot)L_{0,s}(\cdot,h) \right]^2\right)}{\left(\post[\Xinit]{0}{-1}\left[g_0(\cdot) L_{0,s}(\cdot,\1) \right] \right)^2}\\
& = \Xinit\left[\frac{\rmd \Xinit}{\rmd \rho_0}(\cdot)\left\{\frac{g_0(\cdot)L_{0,s}(\cdot, \1)}{\Xinit\left[g_0(\cdot)L_{0,s}(\cdot, \1)\right]}\left[\frac{L_{0,s}(\cdot, h)}{L_{0,s}(\cdot, \1)}-\frac{\Xinit(L_{0,s}(\cdot, h))}{\Xinit(L_{0,s}(\cdot, \1))}\right]\right\}^2\right]\\
&\leq \frac{\supnorm{\ewghtfunc{0}{}}}{\Xinit(g_0)} \frac{\sigma_+}{\sigma_-} \rho^{2s} \oscnorm[2]{h} \leq
\frac{\supnorm{\ewghtfunc{0}{}}}{c_-} \frac{\sigma_+}{\sigma_-} \rho^{2s} \oscnorm[2]{h}\eqsp.
\end{align*}
Similarly, for $r>0$, using that $\post[\Xinit]{r}{r}[L_{r,s}(\cdot,h)]=0$, Eqs.~\eqref{eq:borne1} and \eqref{eq:borne2} show that
\begin{align*}
&\frac{\post[\Xinit]{r-1}{r-1}\left[ \adjfunc{r}{r}{\cdot} \int \kiss{r}{r}(\cdot,x) \ewghtfunc{r}^2(\cdot,x) L^2_{r,s}(x,h) \rmd x\right] }{\left(\post[\Xinit]{r}{r-1}\left[g_r(\cdot) L_{r,s}(\cdot,\1) \right] \right)^2}\\
&= \post[\Xinit]{r-1}{r-1}\left[  \int  Q(\cdot,\rmd x) g_r(x) \ewghtfunc{r}(\cdot,x) \times \dots\phantom{\frac{L_{r,s}(x,\1)}{\post[\Xinit]{r}{r-1}[g_r(\cdot) L_{r,s}(\cdot,\1)]} }\right. \\
&\hspace{60pt}\left. \left\{ \frac{L_{r,s}(x,\1)}{\post[\Xinit]{r}{r-1}[g_r(\cdot) L_{r,s}(\cdot,\1)]}  \left[ \frac{L_{r,s}(x,h)}{L_{r,s}(x,\1)}-\frac{\post[\Xinit]{r}{r}[L_{r,s}(\cdot,h)]}{\post[\Xinit]{r}{r}[L_{r,s}(\cdot,\1)]} \right]\right\}^2 \right] \\
& \leq  \frac{\esssup{\ewghtfunc{r}}}{\post[\Xinit]{r}{r-1}[g_r(\cdot)]} \frac{\sigma_+}{\sigma_-}  \rho^{2(r-s)} \oscnorm[2]{h} \leq
\frac{\esssup{\ewghtfunc{r}}}{c_-} \frac{\sigma_+}{\sigma_-}  \rho^{2(r-s)} \oscnorm[2]{h}
\end{align*}
which implies that
$V_{\Xinit,r,s}[h] \leq \esssup{\adjfunc{r}{r}{}} (\sigma_+/c_-\;\sigma_-) \rho^{2(r-s)} \oscnorm[2]{h} \esssup{\ewghtfunc{r}}$. The result follows.
\end{proof}

We are now in position to state and prove the main result of this section, which provides a uniform bound for the variance
of the particle estimator of the marginal smoothing distribution.
\begin{thm}
Assume (A\ref{assum:bound-likelihood}--\ref{assum:strong-mixing-condition}) hold for $T=\infty$. Then, for any $s \leq T$ ,
\[
\asymVar[\Xinit]{s}{T}{\bar{h}} \leq \frac{\oscnorm[2]{h}}{1-\rho^2} \left(\frac{1}{c_-}\left(\frac{\sigma_+}{\sigma_-}\right)^2 \sup_{r \geq 0} \esssup{\ewghtfunc{r}} \esssup{\adjfunc{r}{r}{}}  + \frac{\sigma_+^4}{c_-^2\sigma_-^3}
\sup_{r\geq 0}\esssup{\ewghtfunc{r}} \esssup{\adjfunc{r}{r}{}} \esssup{g_r}\right) \eqsp ,
\]
where the function $\bar{h}$ and the covariance $\asymVar[\Xinit]{s}{T}{\bar{h}}$ are defined in  \eqref{eq:definition-barh} and \eqref{eq:expression-covariance},
respectively.
\end{thm}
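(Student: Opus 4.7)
Without loss of generality, by translation invariance of $\bar h \mapsto \asymVar[\Xinit]{s:T}{T}{\bar h - \post[\Xinit]{s:T}{T}(\bar h)}$, I assume $\post[\Xinit]{s}{T}(h) = 0$, which also forces $\esssup{h} \leq \oscnorm{h}$. The plan is to bound the two pieces of the decomposition \eqref{eq:expression-covariance}---the initial term and the sum over $t \in \{s+1, \dots, T\}$---separately, and to show that each is dominated by a geometric series of ratio $\rho^2$, producing the common factor $1/(1-\rho^2)$.

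For the initial term, observe that $L_{s,s,T}(x, \bar h) = h(x) L_{s,T}(x, \1)$ since $\bar h$ depends only on the first coordinate. Set $h^\star \eqdef h \cdot L_{s,T}(\cdot, \1)$; the centering gives $\post[\Xinit]{s}{s}(h^\star) = 0$, so Proposition~\ref{prop:uniform-bound-filtering} applied to $h^\star$ bounds $\asymVar[\Xinit]{s}{s}{h^\star}$ by $(\sigma_+/c_-\sigma_-)\,\sup_{r\geq 0}\esssup{\ewghtfunc{r}}\esssup{\adjfunc{r}{r}{}}\,(1-\rho^2)^{-1}\,\oscnorm[2]{h^\star}$. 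Combining $\oscnorm{h^\star} \leq 2\oscnorm{h}\esssup{L_{s,T}(\cdot, \1)}$ with the lower bound $\post[\Xinit]{s}{s}[L_{s,T}(\cdot, \1)] \geq (\sigma_-/\sigma_+)\esssup{L_{s,T}(\cdot, \1)}$ from \eqref{eq:lower-upper-bound} and dividing by $\post[\Xinit]{s}{s}^2[L_{s,T}(\cdot,\1)]$ delivers the first of the two constants on the right-hand side of the theorem, up to universal factors.

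For the sum term, the factorisation $L_{s,t,T}([y_{s:t-1}, x], \bar h) = h(y_s) L_{t,T}(x, \1)$ yields
\[
g_{s,t,T}(x, \bar h) = \Phi_t(x) L_{t,T}(x, \1),\quad \Phi_t(x) \eqdef \frac{\post[\Xinit]{s:t-1}{t-1}[h(X_s)\, q(X_{t-1}, x)]}{\post[\Xinit]{t-1}{t-1}[q(\cdot, x)]}.
\]
The key structural observation is that $\Phi_t$ coincides with the composition $\BK[\Xinit]{t-1}\circ \cdots\circ \BK[\Xinit]{s}$ of $t-s$ backward smoothing kernels applied to $h$; under (A\ref{assum:strong-mixing-condition}) each such kernel has density bounded below by $\sigma_-/\sigma_+$ (as computed in the proof of Proposition~\ref{prop:forgetting-initial-final-conditions}), so by \eqref{eq:bound-oscnorm} $\oscnorm{\Phi_t} \leq \rho^{t-s}\oscnorm{h}$. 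Moreover, $\int \Phi_t\, \rmd \post[\Xinit]{t}{t-1} = \post[\Xinit]{s}{t-1}(h)$, and writing $\post[\Xinit]{s}{t-1}(h) - \post[\Xinit]{s}{T}(h) = (\post[\Xinit]{t-1}{t-1} - \post[\Xinit]{t-1}{T})(\BK[\Xinit]{t-2}\circ \cdots \circ \BK[\Xinit]{s}(\cdot, h))$, together with the trivial bound $|\mu(H) - \mu'(H)| \leq \oscnorm{H}$ and the centering $\post[\Xinit]{s}{T}(h) = 0$, gives $|\post[\Xinit]{s}{t-1}(h)| \leq \rho^{t-1-s}\oscnorm{h}$. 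Combining these two controls yields $|\Phi_t(x)| \leq 2\rho^{t-1-s}\oscnorm{h}$ uniformly in $x$, so $g_{s,t,T}^2(x, \bar h) \leq 4\rho^{2(t-1-s)}L_{t,T}^2(x,\1)\,\oscnorm[2]{h}$.

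To conclude, I substitute this estimate into \eqref{eq:definition-upsilon}, absorb one factor of $\ewghtfunc{t}^2$ via the identity $\adjfunc{t}{t}{x}\kiss{t}{t}(x, x')\ewghtfunc{t}(x, x') = q(x, x') g_t(x')$, bound $\int g_t L_{t,T}^2(\cdot,\1)\, \rmd\post[\Xinit]{t}{t-1} \leq \esssup{g_t}\esssup{L_{t,T}(\cdot,\1)}^2$, and use $\post[\Xinit]{t}{t-1}[g_t L_{t,T}(\cdot,\1)] \geq (\sigma_-/\sigma_+) c_- \esssup{L_{t,T}(\cdot,\1)}$ in the denominator of \eqref{eq:expression-covariance}. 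Each summand is then controlled by a constant multiple of $\esssup{\ewghtfunc{t}}\esssup{\adjfunc{t}{t}{}}\esssup{g_t}\rho^{2(t-1-s)}\oscnorm[2]{h}$, and summation over $t \in \{s+1,\dots, T\}$ produces the second constant of the theorem. The main obstacle is the exponential control of $|\Phi_t(x)|$: neither the Dobrushin contraction alone (which bounds $\oscnorm{\Phi_t}$ but not $\Phi_t$ itself) nor the centering alone suffices, and the essential trick is the two-piece decomposition $\Phi_t = [\Phi_t - \post[\Xinit]{s}{t-1}(h)] + \post[\Xinit]{s}{t-1}(h)$, in which both summands decay at the same geometric rate $\rho^{t-s}$ under (A\ref{assum:strong-mixing-condition}).
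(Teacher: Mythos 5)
Your argument is sound and follows essentially the same route as the paper: the same decomposition of \eqref{eq:expression-covariance} into the initial term and the sum over $t$, the same use of Proposition~\ref{prop:uniform-bound-filtering} together with \eqref{eq:lower-upper-bound} for the initial term, the same factorisation $g_{s,t,T}(x,\bar h)=\Phi_t(x)\,L_{t,t,T}(x,\1)$ (your $\Phi_t$ is exactly the quantity $A$ in the paper's proof of \eqref{eq:keyineg}), and the same final bookkeeping with $\esssup{\ewghtfunc{t}}$, $\esssup{\adjfunc{t}{t}{}}$, $\esssup{g_t}$, $\sigma_\pm$, $c_-$ and the geometric series in $\rho^2$. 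The one step where you genuinely deviate is the geometric control of $\Phi_t$ itself: the paper gets it in a single stroke from the backward-forgetting inequality \eqref{eq:oubli-filtre-backward} of Proposition~\ref{prop:forgetting-initial-final-conditions}, comparing the terminal weighting $q(\cdot,x')$ with $L_{t-1,t-1,T}(\cdot,\1)$ and using that the latter comparison term vanishes by the centering $\post[\Xinit]{s}{T}(h)=0$; you instead split $\Phi_t=[\Phi_t-\post[\Xinit]{s}{t-1}(h)]+\post[\Xinit]{s}{t-1}(h)$, contract the first piece through the Dobrushin coefficient of the composed backward kernels and bound the second by comparing $\post[\Xinit]{t-1}{t-1}$ with $\post[\Xinit]{t-1}{T}$ on $\BK[\Xinit]{t-2}\cdots\BK[\Xinit]{s}h$. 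Both mechanisms rest on the same $\sigma_-/\sigma_+$ minorisation of the backward kernel, so this is a variant rather than a new idea; your version is arguably a little more self-contained since it does not invoke the second half of Proposition~\ref{prop:forgetting-initial-final-conditions} explicitly.

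One caveat: your constants are not those displayed in the statement. The bound $\oscnorm{h^\star}\leq 2\oscnorm{h}\esssup{L_{s,T}(\cdot,\1)}$ followed by division by $\post[\Xinit]{s}{s}^2[L_{s,T}(\cdot,\1)]$ yields a first term of order $4\sigma_+^3/(c_-\sigma_-^3)$ rather than $(1/c_-)(\sigma_+/\sigma_-)^2$, and the two-piece bound $|\Phi_t|\leq 2\rho^{t-1-s}\oscnorm{h}$ produces a factor $4\sigma_+^2/(\sigma_-^2 c_-^2)$ in the sum term instead of $\sigma_+^4/(c_-^2\sigma_-^3)$; since $\sigma_-\leq 1\leq\sigma_+$, neither of your constants is dominated by the stated ones in general, so strictly speaking the displayed inequality is not reproduced, only a bound of identical structure and uniformity in $T$. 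These discrepancies are pure bookkeeping (and the paper's own accounting around \eqref{eq:uniform-smoothing-term1} is itself loose), but you should not dismiss factors like $\sigma_+/\sigma_-$ as ``universal'': they are model constants, and matching the stated right-hand side requires the tighter oscillation bounds used in the paper.
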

\begin{proof}
We bound the summands appearing in \eqref{eq:expression-covariance}.
Consider first the term $\asymVar[\Xinit]{s}{s}{L_{s,s,T}(\cdot,h)} / \post[\Xinit]{s}{s}^2[L_{s,s,T}(\cdot,\1)]$. We first apply Proposition \ref{prop:uniform-bound-filtering} to the function $L_{s,s,T}(\cdot, \bar{h})/\post[\Xinit]{s}{s}\left[L_{s,s,T}(\cdot, \1)\right]$.
Using \eqref{eq:lower-upper-bound} and $L_{s,s,T}(\cdot, \bar{h})= h(\cdot) L_{s,s,T}(\cdot,\1)$, $$ \frac{1}{\int \post[\Xinit]{s}{s}(\rmd x_s)\left[\frac{L_{s,s,T}(x_s, \1)}{L_{s,s,T}(x, \1)}\right]}\leq\frac{\sigma_+}{\sigma_-}, $$
hence $\oscnorm{ L_{s,s,T}(\cdot, \bar{h})/\post[\Xinit]{s}{s}\left[L_{s,s,T}(\cdot, \1)\right]} \leq \frac{\sigma_+}{\sigma_-} \oscnorm{h}$, and
\begin{equation}\label{eq:uniform-smoothing-term1}
\frac{\asymVar[\Xinit]{s}{s}{L_{s,s,T}(\cdot,h)}}{\post[\Xinit]{s}{s}^2[L_{s,s,T}(\cdot,\1)]}\leq \frac{1}{c_-}\left(\frac{\sigma_+}{\sigma_-}\right)^2 \sup_{r \geq 0} \esssup{\ewghtfunc{r}} \esssup{\adjfunc{r}{r}{}}  \frac{ \oscnorm[2]{h}}{1-\rho^2} \eqsp.
\end{equation}
Now, we write
\begin{align}
&\frac{\post[\Xinit]{t-1}{t-1}(\upsilon_{s,t,T}(\cdot,h))}{\post[\Xinit]{t}{t-1}^2[g_t(\cdot) L_{t,t,T}(\cdot,\1)]} \nonumber\\ 
 &= \post[\Xinit]{t-1}{t-1}\left( \adjfunc{t}{t}{\cdot} \int \frac{  \kiss{t}{t}(\cdot, x') \ewghtfunc{t}^2(\cdot, x') \post[\Xinit]{s:t-1}{t-1}^2\left[ \bar{h}(\cdot) q_{s,t-1}(\cdot,x')\right] L_{t,t,T}^2(x', \1) } {\post[\Xinit]{t-1}{t-1}^2\left[q(\cdot, x')\right] \post[\Xinit]{t}{t-1}^2\left[g_t(\cdot) L_{t,t,T}(\cdot, \1)\right] } \rmd x'\right)\eqsp .\nonumber
\end{align}
We will show that
\begin{equation}\label{eq:keyineg}
 \left| \frac{\post[\Xinit]{s:t-1}{t-1}\left[ \bar{h}(\cdot) q_{s,t-1}(\cdot,x')\right] L_{t,t,T}(x', \1)}{\post[\Xinit]{t}{t-1}\left[g_t(\cdot) L_{t,t,T}(\cdot, \1)\right]} \right| \leq \frac{\sigma_+^2}{\sigma_- c_-} \rho^{t-s} \oscnorm{h}\eqsp.
\end{equation}
Using this inequality,
\begin{align*}
&\frac{\post[\Xinit]{t-1}{t-1}(\upsilon_{s,t,T}(\cdot,h)) \post[\Xinit]{t-1}{t-1}(\adjfunc{t}{t}{})}{\post[\Xinit]{t}{t-1}^2[g_t(\cdot) L_{t,t,T}(\cdot,\1)]}\\
 &\leq
 \left(\frac{\sigma_+^2}{\sigma_- c_-}\right)^2 \rho^{2(t-s)} \oscnorm{h}^2
 \post[\Xinit]{t-1}{t-1}\left( \adjfunc{t}{t}{\cdot} \int \frac{  \kiss{t}{t}(\cdot, x') \ewghtfunc{t}^2(\cdot, x') }{\post[\Xinit]{t-1}{t-1}^2\left[q(\cdot, x')\right]}\rmd x'\right) \esssup{\adjfunc{t}{t}{}}\\
&\leq \left(\frac{\sigma_+^2}{\sigma_- c_-}\right)^2 \rho^{2(t-s)}  \oscnorm{h}^2
 \post[\Xinit]{t-1}{t-1}\left( \int \frac{ q(\cdot, x') g_t(x') \ewghtfunc{t}(\cdot, x') }{\post[\Xinit]{t-1}{t-1}^2\left[q(\cdot, x')\right]}\rmd x'\right)\esssup{\adjfunc{t}{t}{}}\\
&\leq \left(\frac{\sigma_+^2}{\sigma_- c_-}\right)^2 \rho^{2(t-s)} \oscnorm{h}^2
\frac{\esssup{\ewghtfunc{t}} \esssup{\adjfunc{t}{t}{}} \esssup{g_t} }{\sigma_-}\eqsp ,
\end{align*}
where we used the Fubini Theorem in the last step. Let us finally turn to the proof of the inequality \eqref{eq:keyineg}.
\begin{align*}
 &\frac{\post[\Xinit]{s:t-1}{t-1}\left[ \bar{h}(\cdot) q_{s,t-1}(\cdot,x')\right] L_{t,t,T}(x', \1)}{\post[\Xinit]{t}{t-1}\left[g_t(\cdot) L_{t,t,T}(\cdot, \1)\right]} \\
 &= \frac{\int \post[\Xinit]{s}{s}(\rmd x_s) h(x_s) l_{s,t-1}(x_s, x_{t-1})q(x_{t-1}, x') L_{t,t,T}(x', \1) \rmd x_{t-1}}{\int \post[\Xinit]{s}{s}(\rmd x_s) l_{s, t-1} (x_s, x_{t-1}) \left\{ \int g_t(x_t) q(x_{t-1}, x_t) L_{t,t,T}(x_t, \1) \rmd x_t\right\} \rmd x_{t-1}} \eqsp.
\end{align*}
The last expression can be written $A\times B$ with
$$A= \frac{\int \post[\Xinit]{s}{s}(\rmd x_s) h(x_s) l_{s,t-1}(x_s, x_{t-1})q(x_{t-1}, x') \rmd x_{t-1}}{\int \post[\Xinit]{s}{s}(\rmd x_s) l_{s, t-1} (x_s, x_{t-1}) q(x_{t-1}, x') \rmd x_{t-1}}$$
and
$$B= \frac{\int \post[\Xinit]{s}{s}(\rmd x_s) l_{s, t-1} (x_s, x_{t-1}) q(x_{t-1}, x') L_{t,t,T}(x', \1)  \rmd x_{t-1}} {\int \post[\Xinit]{s}{s}(\rmd x_s) l_{s, t-1} (x_s, x_{t-1}) \left\{ \int g_t(x_t) q(x_{t-1}, x_t) L_{t,t,T}(x_t, \1) \rmd x_t\right\} \rmd x_{t-1}}\eqsp.$$
We will bound these two terms separately.
Since $\post[\Xinit]{s}{T}(h) = 0$,
$$A' = \frac{\int \post[\Xinit]{s}{s}(\rmd x_s) h(x_s) l_{s,t-1}(x_s, x_{t-1}) L_{t-1,t-1, T}(x_{t-1}, \1) \rmd x_{t-1}}{\int \post[\Xinit]{s}{s}(\rmd x_s) l_{s,t-1}(x_s, x_{t-1}) L_{t-1, t-1, T}(x_{t-1}, \1) \rmd x_{t-1}} = 0 \eqsp.$$
Thus, by Proposition~\ref{prop:forgetting-initial-final-conditions},
\begin{align*}
|A| &= \left|A-A'\right|\leq \rho^{t-s} \oscnorm{h}\eqsp.
\end{align*}
On the other hand, as $q(x_{t-1}, x')\leq \sigma_+$, as $\int g_t(x_t) q(x_{t-1}, x_t)\rmd x_t \geq c_-$
and as for every $x_t$ it holds that $$\frac{L_{t,t, T}(x_{t}, \1)}{L_{t,t, T}(x', \1)} \geq \frac{\sigma_-}{\sigma_+}\eqsp ,$$
$B$ is upper-bounded by $\sigma_+^2 / (\sigma_-c_-)$.
\end{proof}

\appendix
\section{Technical results}
\begin{lem}\label{lem:inegEssentielle}
Assume that $a_N$, $b_N$, $c_N$, $d_N$ and $b$ are random variables such that there exist positive constants $\beta,B_1,C_1,B_2,C_2,M$ such that
\begin{enumerate}[(I)]
\item \label{item:inegEssentielle-borne}
$|a_N/b_N - c_N/d_N|\leq M$, $\PP$-\as\ and  $b \geq \beta$, $\PP$-\as
\item \label{item:inegEssentielle-expo1}
For all $\epsilon>0$ and all $N\geq1$, $\CPP{|b_N-b|>\epsilon}{}{}\leq B_1 \rme^{-C_1 N \epsilon^2}$ \eqsp,
\item \label{item:inegEssentielle-expo2}
For all $\epsilon>0$ and all $N\geq1$, $\PP(|a_N-(c_N/d_N)b_N|>\epsilon)\leq B_2 \rme^{-C_2 N \left(\frac{\epsilon}{M}\right)^2}$
\end{enumerate}
Then,
$$
\PP\left( \left|\frac{a_N}{b_N}-\frac{c_N}{d_N}\right|>\epsilon\right) \leq B_1 \rme^{-C_1 N \left(\frac{\epsilon \beta}{2M}\right)^2}+ B_2 \rme^{-C_2 N \left(\frac{\epsilon \beta}{2M}\right)^2}
$$
\end{lem}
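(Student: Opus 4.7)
The plan is a standard ratio-decomposition argument, combined with a case distinction based on whether $b_N$ is close to its target $b$ or not. First, I would observe that by assumption \eqref{item:inegEssentielle-borne} the event $\{|a_N/b_N - c_N/d_N| > \epsilon\}$ is empty as soon as $\epsilon > M$, so the claimed bound is trivial in that range. Hence I assume $\epsilon \leq M$, which is crucial because it guarantees $\epsilon\beta/(2M) \leq \beta/2$.

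Next I would use the algebraic identity
\[
\frac{a_N}{b_N}-\frac{c_N}{d_N}=\frac{1}{b_N}\left(a_N-\frac{c_N}{d_N}\, b_N\right).
\]
On the event $\{b_N \geq \beta/2\}$, since $1/|b_N| \leq 2/\beta$, the inequality $|a_N/b_N - c_N/d_N| > \epsilon$ forces $|a_N - (c_N/d_N) b_N| > \epsilon \beta/2$. Therefore, splitting according to this event gives
\[
\PP\left(\left|\frac{a_N}{b_N}-\frac{c_N}{d_N}\right|>\epsilon\right)
\leq \PP\left(\left|a_N-\frac{c_N}{d_N}b_N\right|> \frac{\epsilon \beta}{2}\right) + \PP\left(b_N < \frac{\beta}{2}\right).
\]

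The first term is handled directly by assumption \eqref{item:inegEssentielle-expo2} applied with threshold $\epsilon\beta/2$, which produces the factor $B_2 \rme^{-C_2 N (\epsilon\beta/(2M))^2}$. For the second term, I would use that $b \geq \beta$ a.s. by \eqref{item:inegEssentielle-borne}, so $\{b_N < \beta/2\} \subseteq \{|b_N - b| > \beta/2\}$, and then use the key monotonicity enabled by the reduction $\epsilon \leq M$: since $\epsilon\beta/(2M) \leq \beta/2$,
\[
\PP\left(b_N < \tfrac{\beta}{2}\right) \leq \PP\left(|b_N-b|> \tfrac{\beta}{2}\right) \leq \PP\left(|b_N-b|> \tfrac{\epsilon\beta}{2M}\right),
\]
and assumption \eqref{item:inegEssentielle-expo1} yields $B_1 \rme^{-C_1 N (\epsilon\beta/(2M))^2}$. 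Adding the two bounds gives the claim.

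There is no real obstacle here; the only subtle point is noticing that the a.s.\ bound in \eqref{item:inegEssentielle-borne} must be used twice, once to restrict attention to $\epsilon\leq M$ (so that the threshold for the $b_N$ deviation inherits the correct $\epsilon$-dependence in its exponent), and once to convert $\{b_N < \beta/2\}$ into a deviation event for $b_N-b$ via $b\geq\beta$. Everything else is bookkeeping.
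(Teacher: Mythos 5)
Your proof is correct, but it follows a slightly different route than the paper. The paper's argument is a single deterministic inequality: starting from the identity $a_N/b_N - c_N/d_N = b^{-1}\bigl[(a_N/b_N - c_N/d_N)(b-b_N) + (a_N - (c_N/d_N)b_N)\bigr]$, it uses \eqref{item:inegEssentielle-borne} to bound the cross term by $M$ and $b^{-1}$ by $\beta^{-1}$, obtaining $|a_N/b_N - c_N/d_N| \leq \beta^{-1}M|b-b_N| + \beta^{-1}|a_N-(c_N/d_N)b_N|$ a.s., whence the union bound over the two deviation events with thresholds $\epsilon\beta/(2M)$ and $\epsilon\beta/2$, and then \eqref{item:inegEssentielle-expo1} and \eqref{item:inegEssentielle-expo2} conclude with no case distinction on $\epsilon$. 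You instead clear the denominator via $a_N/b_N - c_N/d_N = b_N^{-1}(a_N - (c_N/d_N)b_N)$ and split on the good event $\{b_N \geq \beta/2\}$, converting its complement into a deviation event through $b \geq \beta$; this is the standard ``denominator bounded away from zero with high probability'' argument. The price is that the bound $\PP(|b_N - b| > \beta/2) \leq B_1\rme^{-C_1 N(\beta/2)^2}$ has no $\epsilon$ in the exponent, so you need the preliminary reduction to $\epsilon \leq M$ (using \eqref{item:inegEssentielle-borne} a second time, since the left-hand probability vanishes for $\epsilon > M$) to dominate it by $B_1\rme^{-C_1 N(\epsilon\beta/(2M))^2}$; you handle this correctly, and your monotonicity step $\epsilon\beta/(2M)\leq\beta/2$ is exactly what is needed. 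The paper's version buys a one-line proof valid for all $\epsilon$ at once; yours is more elementary in the algebra but requires the extra range reduction and the conversion of $\{b_N<\beta/2\}$ into a deviation event. Both yield the stated constants.
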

\begin{proof}
Write
\begin{multline*}
\left|\frac{a_N}{b_N}-\frac{c_N}{d_N}\right|  \leq b^{-1} \left| \frac{a_N}{b_N} - \frac{c_N}{d_N} \right|  |b-b_N| + b^{-1}\left| a_N - \frac{c_N}{d_N} b_N \right|\\
\leq \beta^{-1}M |b-b_N|+ \beta^{-1}\left| a_N - \frac{c_N}{d_N} b_N \right| \quad a.s.
\end{multline*}
Thus,
$$
\left\{\left|\frac{a_N}{b_N}-\frac{c_N}{d_N}\right|>\epsilon\right\} \subset \left\{|b-b_N|> \frac{\epsilon \beta}{2M}\right\} \cup \left\{ \left|a_N - \frac{c_N}{d_N} b_N\right|>\frac{\epsilon \beta}{2M}\right\}
$$
and the proof follows.
\end{proof}

\begin{lem}
\label{lem:toutbete}
Let $\nu$ be a  measure and $\{A_N(x)\}$ be a sequence of stochastic processes such that,
\begin{enumerate}
\item for $\nu$-almost every $x$, $A_N(x) \plim a(x)$,
\item there exists a constant $C$ and a $\nu$-integrable function $h$, for $\nu$-almost every $x$, $|A_N(x)| \leq C h(x)$.
\end{enumerate}
Then, $\int | A_N(x) -a(x)| \nu(\rmd x) \to_{N \to \infty} 0$ in $L^1(\nu)$.
\end{lem}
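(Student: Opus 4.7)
The plan is to combine dominated convergence applied on the underlying probability space $(\Omega,\mcf{},\PP)$ with dominated convergence applied on the measure space $(\Xset,\nu)$, bridging the two by Fubini--Tonelli. I interpret the conclusion as the statement that the random variable $\int|A_N(x)-a(x)|\,\nu(\rmd x)$ tends to $0$ in $L^1(\PP)$, and hence also in $\PP$-probability; this is the form actually needed to deduce the ``$\plim 0$'' assertion used in the proof of Theorem~\ref{thm:Hoeffding-FFBS} above.

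First I would upgrade the hypotheses. For each $x$ such that $A_N(x)\plim a(x)$, extracting a $\PP$-a.s.\ convergent subsequence and applying the bound $|A_N(x)|\leq C h(x)$ $\PP$-a.s.\ yields $|a(x)|\leq C h(x)$; hence $|A_N(x)-a(x)|\leq 2C h(x)$ $\PP$-a.s., for $\nu$-a.e.\ $x$. For such $x$ the random variables $A_N(x)-a(x)$ converge to $0$ in probability and are uniformly dominated by the \emph{deterministic} integrable constant $2Ch(x)$, so dominated convergence on $(\Omega,\mcf{},\PP)$ gives
\[
\PE\bigl[|A_N(x)-a(x)|\bigr]\xrightarrow[N\to\infty]{}0 \qquad \text{for $\nu$-a.e. } x \eqsp .
\]

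Next I would invoke Fubini--Tonelli, using the joint measurability of $(x,\omega)\mapsto A_N(x,\omega)$ (automatic in the particle-filter applications where this lemma is invoked), to write
\[
\PE\!\left[\int|A_N(x)-a(x)|\,\nu(\rmd x)\right]=\int\PE\bigl[|A_N(x)-a(x)|\bigr]\,\nu(\rmd x) \eqsp .
\]
The integrand on the right is dominated by the $\nu$-integrable function $2C h(x)$, so a second application of dominated convergence, this time on $(\Xset,\nu)$, shows that the right-hand side tends to $0$. Consequently $\int|A_N-a|\,\rmd\nu\to 0$ in $L^1(\PP)$ and, a fortiori, in $\PP$-probability. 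The argument is essentially bookkeeping; the only point meriting care is the double qualifier of ``almost every'' ($\PP$ on $\Omega$ versus $\nu$ on $\Xset$) and the need to chase the uniform bound $2Ch(x)$ through both applications of the DCT. No deeper ingredient is required.
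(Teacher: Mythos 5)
Your proof is correct and follows essentially the same route as the paper's: convergence in probability plus the uniform bound $2Ch(x)$ gives $\PE\left|A_N(x)-a(x)\right|\to 0$ for $\nu$-a.e.\ $x$, and then Fubini together with dominated convergence on $(\Xset,\nu)$ yields $\PE\int\left|A_N(x)-a(x)\right|\nu(\rmd x)\to 0$. You simply spell out the steps (subsequence argument for $|a(x)|\leq Ch(x)$, the double use of the dominating function) that the paper leaves implicit.
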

\begin{proof}
Under the stated assumptions $\PE| A_N(x) - a(x)| \to 0$ as $N$ goes to infinity $\nu$-a.e.. On the other hand, $\nu$-a.e., $|A_N(x) - a(x)| \leq 2C h(x)$.
The proof follows from the Fubini Theorem and the dominated convergence Theorem,
\[
\PE  \int |A_N(x)  -a(x)| \nu(\rmd x)  =\int \PE \left| A_N(x) -a (x) \right| \nu(\rmd x) \rightarrow_{N \to \infty} 0 \eqsp.
\]
\end{proof}

\begin{lem}
\label{lem:toutbete-1}
Assume that (A\ref{assum:bound-likelihood}-\ref{assum:borne-FFBS}) hold for some $T$. Let $\{\Upsilon_N(x)\}$ be a sequence of stochastic processes and $\upsilon$
a function such that
\textbf{(i)} there exists a constant $C < \infty$ such that, for all $N$, $\esssup{\Upsilon_N} \leq \upsilon_\infty$ and $\esssup{\upsilon} \leq C$ and
\textbf{(ii)} for all $M \geq 0$, $\sup_{|x| \leq M} \left|\Upsilon_N(x) - \upsilon(x) \right| \plim_{N \to \infty} 0$. Then, $t \leq T$,
$\sumwght{t}^{-1} \sum_{\ell=1}^N \ewght{t}{\ell} \Upsilon_N(\epart{t}{\ell}) \plim_{N \to \infty} \post[\Xinit]{t}{t}(\upsilon)$.
\end{lem}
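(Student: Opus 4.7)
The plan is to decompose the target quantity so that the stochastic process $\Upsilon_N$ can be replaced by the deterministic bounded function $\upsilon$, and then to invoke the particle filter consistency from Proposition \ref{prop:exponential-inequality-forward}. Write
\begin{equation*}
\sumwght{t}^{-1} \sum_{\ell=1}^N \ewght{t}{\ell} \Upsilon_N(\epart{t}{\ell}) - \post[\Xinit]{t}{t}(\upsilon)
= A_N + B_N \eqsp,
\end{equation*}
where
\begin{equation*}
A_N \eqdef \sumwght{t}^{-1} \sum_{\ell=1}^N \ewght{t}{\ell} \bigl[\Upsilon_N(\epart{t}{\ell}) - \upsilon(\epart{t}{\ell})\bigr] \eqsp,
\qquad
B_N \eqdef \sumwght{t}^{-1} \sum_{\ell=1}^N \ewght{t}{\ell} \upsilon(\epart{t}{\ell}) - \post[\Xinit]{t}{t}(\upsilon) \eqsp.
\end{equation*}
Since $\upsilon$ is bounded and measurable, $B_N \plim 0$ is an immediate consequence of the normalized Hoeffding inequality \eqref{eq:Hoeffding-Filtering-normalized} of Proposition~\ref{prop:exponential-inequality-forward}.

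To control $A_N$, I would use the truncation argument. Using the assumption that $\Xset$ is a normed vector space (see the note preceding the lemma statement in the proof of Theorem~\ref{thm:Hoeffding-FFBS}), for any $M > 0$ bound
\begin{equation*}
|A_N| \leq \sup_{|x| \leq M} |\Upsilon_N(x) - \upsilon(x)|
+ (\upsilon_\infty + C) \, \sumwght{t}^{-1} \sum_{\ell=1}^N \ewght{t}{\ell} \1_{\{|\epart{t}{\ell}| > M\}} \eqsp.
\end{equation*}
By hypothesis (ii), the first term tends to zero in probability as $N \to \infty$ for any fixed $M$. For the second term, the indicator $\1_{\{|x| > M\}}$ is a bounded measurable function, so another application of Proposition~\ref{prop:exponential-inequality-forward} gives
\begin{equation*}
\sumwght{t}^{-1} \sum_{\ell=1}^N \ewght{t}{\ell} \1_{\{|\epart{t}{\ell}| > M\}} \plim \post[\Xinit]{t}{t}\bigl(\{x : |x| > M\}\bigr) \eqsp.
\end{equation*}

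To conclude, fix $\epsilon > 0$. Since $\post[\Xinit]{t}{t}$ is a probability measure on the Polish space $\Xset$, it is tight and we may choose $M = M(\epsilon)$ large enough that $(\upsilon_\infty + C)\, \post[\Xinit]{t}{t}(\{|x| > M\}) < \epsilon/3$. With this $M$ fixed, the estimates above imply
$\PP(|A_N| > \epsilon) \to 0$ by a standard $\epsilon/3$ splitting: with high probability the first summand is less than $\epsilon/3$ (by (ii)) and the second summand is less than $2\epsilon/3$ (by the previous convergence combined with the choice of $M$). Combining $A_N \plim 0$ and $B_N \plim 0$ yields the claim.

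The main obstacle is just the usual subtlety of the truncation step: the convergence in (ii) is only local uniform in $x$, so one cannot directly pass to the limit inside the weighted sum. The tightness of the limit $\post[\Xinit]{t}{t}$, combined with the consistency of the normalized particle approximation for arbitrary bounded measurable test functions, is precisely what makes the truncation argument work.
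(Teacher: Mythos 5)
Your proof is correct and follows essentially the same route as the paper: the same truncation of the weighted sum on $\{|x|\leq M\}$ versus $\{|x|>M\}$, hypothesis \textbf{(ii)} for the truncated part, Proposition~\ref{prop:exponential-inequality-forward} applied to the bounded indicator (and, implicitly in the paper, to $\upsilon$ itself) for the tail, and the vanishing of $\post[\Xinit]{t}{t}\left(\1\{|\cdot|>M\}\right)$ as $M\to\infty$ to close the argument. The only cosmetic difference is that you isolate the term $B_N$ explicitly, which the paper leaves implicit.
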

\begin{proof}
Write $\sumwght{N}^{-1} \sum_{t=1}^N \ewght{t}{\ell} \{ \Upsilon_N(\epart{t}{\ell}) - \upsilon(\epart{t}{\ell}) \} = S_{N,1} + S_{N,2}$,
with $S_{N,1}\eqdef \sumwght{N}^{-1} \sum_{t=1}^N \ewght{t}{\ell} \{ \Upsilon_N(\epart{t}{\ell}) - \upsilon(\epart{t}{\ell}) \}  \1 \{ |\epart{t}{\ell}| \leq M \}$ and $S_{N,2} \eqdef \sumwght{N}^{-1} \sum_{t=1}^N \ewght{t}{\ell} \{ \Upsilon_N(\epart{t}{\ell}) - \upsilon(\epart{t}{\ell}) \}  \1 \{ |\epart{t}{\ell}| > M \}$. Since $S_{N,1} \leq \sup_{|x| \leq M} \left|\Upsilon_N(x) - \upsilon(x)\right| $, assumption \textbf{(ii)} implies that $S_{N,1} \plim_{N \to \infty} 0$. On the other hand, $S_{N,2} \leq 2 C \sumwght{t}^{-1} \sum_{\ell=1}^N \ewght{t}{\ell} \1 \{ |\epart{t}{\ell}| > M\}$.
By Proposition \ref{prop:exponential-inequality-forward}, $\sumwght{t}^{-1} \sum_{\ell=1}^N \ewght{t}{\ell} \1 \{ |\epart{t}{\ell}| > M\} \plim_{N \to \infty} \post[\Xinit]{t}{t}\left( \1 \{ |\cdot| > M \} \right)$. the proof follows since $\lim_{M \to \infty} \post[\Xinit]{t}{t}\left( \1 \{ |\cdot| > M \} \right)= 0$.
\end{proof}

\begin{lem}
\label{lem:pasfor}
Let $\{ Y_{n,i} \}_{i=1}^n$ be a triangular array of random variables such that there exist constants $B > 0$, $C > 0$ and $\rho$, $0 < \rho < 1$ such that, for all $n$, $i \in \{1, \dots, n \}$ and $\epsilon > 0$,
\[
\PP\left( |Y_{n,i}| \geq \epsilon \right) \leq B \rme^{-C \epsilon^2 \rho^{-2i}} \eqsp.
\]
Then, there exists $\bar{B}$ and $\bar{C}$ such that, for any $n$ and $\epsilon > 0$,
\[
\PP\left( \left| \sum_{i=1}^n Y_{n,i} \right| \geq \epsilon \right) \leq \bar{B} \rme^{- \bar{C} \epsilon^2} \eqsp.
\]
\end{lem}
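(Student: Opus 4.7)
}

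The strategy is a weighted union bound. Since the tail bound for $Y_{n,i}$ gets sharper as $i$ grows (because $\rho^{-2i} \to \infty$), one can afford to control $Y_{n,i}$ at thresholds $\delta_i \epsilon$ that decay with $i$, provided $\sum \delta_i \leq 1$. The right choice, as I explain below, is $\delta_i$ of order $i \rho^i$.

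Concretely, I set $\delta_i \eqdef (1-\rho)^2\,i\,\rho^{i-1}$, so that $\sum_{i=1}^{\infty}\delta_i = (1-\rho)^2\sum_{i\geq 1} i\rho^{i-1} = 1$. Because $\sum_{i=1}^n \delta_i \leq 1$, the event $\{|\sum_{i=1}^n Y_{n,i}| \geq \epsilon\}$ is contained in $\bigcup_{i=1}^n\{|Y_{n,i}| \geq \delta_i\epsilon\}$, so the union bound and the assumption give
\[
\PP\!\left(\Bigl|\sum_{i=1}^n Y_{n,i}\Bigr|\geq \epsilon\right) \;\leq\; \sum_{i=1}^n B \exp\!\bigl(-C\epsilon^2 \delta_i^2 \rho^{-2i}\bigr) \;=\; B\sum_{i=1}^n \exp\!\bigl(-C\kappa\,i^2 \epsilon^2\bigr),
\]
where $\kappa \eqdef (1-\rho)^4/\rho^2$, thanks to the clean identity $\delta_i^2 \rho^{-2i} = \kappa\, i^2$. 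This is exactly the point of the $i \rho^i$ weights: a purely geometric choice $\delta_i \propto \rho^i$ would make $\delta_i^2 \rho^{-2i}$ constant in $i$ and leave an unwanted factor $n$ after summation.

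It remains to bound $S(\epsilon) \eqdef \sum_{i=1}^{\infty} \exp(-C\kappa\, i^2 \epsilon^2)$ by a constant multiple of $\exp(-C\kappa\epsilon^2)$, uniformly in $\epsilon > 0$. Using $i^2 \geq i$ for $i\geq 1$, I dominate $S(\epsilon)$ by the geometric sum
\[
\sum_{i=1}^{\infty} \exp(-C\kappa\, i\, \epsilon^2) \;=\; \frac{\exp(-C\kappa\epsilon^2)}{1-\exp(-C\kappa\epsilon^2)}.
\]
Then I split into cases: if $C\kappa\epsilon^2 \geq \log 2$, the denominator is at least $1/2$, so $S(\epsilon) \leq 2\exp(-C\kappa\epsilon^2)$; if $C\kappa\epsilon^2 < \log 2$, the trivial bound $\PP(\cdot)\leq 1$ suffices, since in this range $\exp(-C\kappa\epsilon^2) > 1/2$ and therefore $1 \leq 2\exp(-C\kappa\epsilon^2)$.

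Combining both cases yields
\[
\PP\!\left(\Bigl|\sum_{i=1}^n Y_{n,i}\Bigr|\geq \epsilon\right) \;\leq\; \bar{B}\exp(-\bar{C}\epsilon^2),
\]
with $\bar{C} \eqdef C(1-\rho)^4/\rho^2$ and $\bar{B} \eqdef 2\max(B,1)$, independent of $n$. There is no real obstacle here; the only nontrivial step is the choice of weights, and the small-$\epsilon$ regime is handled for free by the trivial bound.
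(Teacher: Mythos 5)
Your proof is correct and takes essentially the same route as the paper's: a weighted union bound with thresholds proportional to $i\rho^{i}\epsilon$ (the paper uses $\sqrt{i}\,\rho^{i}\epsilon$, normalized by $S=\sum_{i\geq1}\sqrt{i}\,\rho^{i}$), followed by summing the resulting series of exponentials and absorbing the small-$\epsilon$ regime into the constant $\bar{B}$.
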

\begin{proof}
Denote by $S \eqdef \sum_{i=1}^\infty \sqrt{i} \rho^i$. It is plain to see that
\[
\PP\left( \left| \sum_{i=1}^n Y_{n,i} \right| \geq \epsilon \right) \leq \sum_{i=1}^n \PP\left( |Y_{n,i}| \geq \epsilon S^{-1} \sqrt{i} \rho^i \right) \leq B \sum_{i=1}^n \rme^{-C S^{-1} \epsilon^2 i} \eqsp.
\]
Set $\epsilon_0 > 0$. The proof follows by noting that, for any $\epsilon \geq  \epsilon_0$,
\[
\sum_{i=1}^n \rme^{-C S^{-1} i \epsilon^2} \leq (1- \rme^{C S^{-1} \epsilon_0^2})^{-1} \rme^{C S^{-1} \epsilon_0^2} \rme^{-C S^{-1} \epsilon^2} \eqsp.
\]
\end{proof}
\bibliographystyle{plain}
\bibliography{dgarm}
\end{document}